\newtheorem{algorithm}[theorem]{Algorithm}
\newtheorem{remark}[theorem]{Remark}
\def \RR {{\mathbb R}}
\def \< {\langle}
\def \> {\rangle}
\def \RN {\mathbb{R}_+^N}
\def \Drm {\mathrm{D}}
\title{Identifying the stored energy of a hyperelastic structure by using an attenuated Landweber method}
\author{
Julia Seydel\thanks{Department of Mathematics, Saarland University, PO Box 15 11 50, 66041 Saarbr\"ucken, Germany ({\tt julia.seydel@math.uni-sb.de}).} \and
Thomas Schuster\thanks{Department of Mathematics, Saarland University, PO Box 15 11 50, 66041 Saarbr\"ucken, Germany ({\tt thomas.schuster@num.uni-sb.de}),
correspondent author.}}
\begin{document}
\maketitle

\begin{abstract}
We consider the nonlinear, inverse problem of identifying the stored energy function of a hyperelastic material from full knowledge of the displacement field as well as from surface sensor measurements. The displacement field is represented as a solution of Cauchy's equation of motion, which is a nonlinear, elastic wave equation. 
Hyperelasticity means that the first Piola-Kirchhoff stress tensor is given as the gradient of the stored energy function. We assume that a dictionary of suitable functions is available and the aim is to recover the stored energy with respect to this dictionary. The considered inverse problem is of vital interest for the development of structural health monitoring systems which are constructed to detect defects in elastic materials from boundary measurements of the displacement field, since the stored energy encodes the mechanical peroperties of the underlying structure.
In this article we develope a numerical solver for both settings using the attenuated Landweber method. 
We show that the parameter-to-solution map satisfies the local tangential cone condition. This result can be used to prove local convergence of the attenuated Landweber method in case that the full displacement field is measured. 
In our numerical experiments we demonstrate how to construct an appropriate dictionary and show that our algorithm is well suited to localize damages in various situations.  
\end{abstract}
\begin{keywords}
stored energy function, Cauchy's equation of motion, hyperelasticity, conic combination, attenuated Landweber method, local tangential cone condition
\end{keywords}

\begin{AMS}
35L70, 65M32, 74B20
\end{AMS}


\section{Introduction}\label{sec:introduction}
The starting point of our inverse problem is Cauchy's equation of motion for an hyperelastic material
\begin{equation*}
\rho (x) \ddot{u}(t,x) - \nabla\cdot\nabla_{Y}C(x,Ju(t,x)) = f(t,x),\,
\end{equation*}
where $t\in[0,T]$ denotes time and $x\in \Omega\subset\RR^3$ a point in a bounded, open domain $\Omega$ in $\RR^3$. Furthermore, $\rho(x)\in(0,\infty)$ denotes the mass density in $x\in\Omega$ and 
$f:[0,T]\times \Omega\to \RR^3$ is an external body force. The function $C=C(x,Y)$, $Y\in \RR^{3\times 3}$ with $\mathrm{det}\, Y>0$, is called stored energy function and encodes the mechanical properties of the material. 
The derivative $\nabla_{Y}$ is to be understood componentwise and $Ju(t,x)=(\partial_{j}u_{i}(t,x))_{i,j=1,2,3}$ represents the displacement gradient in $t\in[0,T]$ and $x\in\Omega$. We refer to standard textbooks like \cite{CIARLET:88, Holzapfel200003, MARSDEN;HUGHES:83} for detailed introductions and derivations of Cauchy's equation.

Given initial values $u_{0}$ and $u_{1}$ and assuming that we have homogeneous Dirichlet boundary values we end up with
\begin{equation}\label{cauchy-hyper}
\rho (x) \ddot{u}(t,x) - \nabla\cdot \nabla_Y C(x,Ju(t,x)) = f(t,x)\,
\end{equation}
for $t\in[0,T]$ and $x\in\Omega\subset\RR^3$ with 
\begin{eqnarray}
\label{anfangswert1}
u(0,\cdot) &=& u_0\in H^{2}(\Omega,\RR^3),\\[1ex]
\label{anfangswert2}
\dot{u}(0,\cdot) &=& u_1\in H^{1}(\Omega,\RR^3)
\end{eqnarray}
and 
\begin{equation}\label{randwert}
 u(t,x) = 0,\;\;t\in[0,T],\;\;x\in\partial\Omega.
\end{equation}
Equation (\ref{cauchy-hyper}) describes the behavior of hyperelastic materials. 
An example for hyperelastic materials are carbon fibre reinforced composites (CFC). This is why the inverse problem of identifying $C$ from measurements of the displacement field can be very
important for the detection of defects in such structures, the so called structural health monitoring, see \cite{Giu08}. 
Structural health monitoring systems consist of a number of actors and sensors which are applied to the structure's surface. The idea is that the actors generate guided waves propagating through the structure which interact with defects and are subsequently acquired by the sensors. Mathematically this can be described as an inverse problem of determining material properties from boundary data of the displacement field which is a solution of (\ref{cauchy-hyper}). 
In this article we investigate the reconstruction problem of the stored energy function $C(x,Y)$ where we consider two different scenarios for the data acquisition. On the one hand we assume to have as data the full displacement field available and on the other hand we suppose that the data are measured on parts of the boundary. The second scenario is used as mathematical model for sensor measurements.

Because there are numerous publications on inverse identification problems in elastic media for different settings, we only summarize here articles which are associated with the scope of this article. 
A comprehensive overview of various inverse problems in the field of elasticity offers the article \cite{BC:2005}. 
In \cite{BLL:2011} Bourgeois and others have applied and implemented the linear sampling method, introduced by Colton and Kirsch in \cite{CK:1996} for detection of reverberant scatterers, for the isotropic 
Navier Lam\'{e} equation. Because the method represents a possibility to detect defects in isotropic materials and damages, which are described by such a scatterer, it was used in \cite{BL13} 
for the identification of cracks. 
The inverse problem on determining the spatial component of the source term in a hyperbolic equation with time-dependent principal part is investigated in \cite{jiang2015theoretical}. For solving this problem numerically the authors adopt the classical Tikhonov regularization to transform the inverse problem into an output least-squares minimation that can be solved by the iterative thresholding algorithm. 
In \cite{Bal14} the authors developed an algorithm for the quantitative reconstruction of constitutive parameters, namely the two eigenvalues of the elasticity tensor, in isotropic linear elasticity from noisy full-field measurements.  
An algorithm, that guarantees the conservation of the total energy as well as the conservation of momentum and angular momentum is found in \cite{ST92}. 
The reconstruction of an anisotropic elasticity tensor from a finite number of displacement fields for the linear, stationary elasticity equation is represented in \cite{BMU15}. Lechleiter and Schlasche considered in \cite{lechleiterschlasche} the identification of the Lam\'{e} parameters of the second order elastic wave equation from time-dependent elastic wave measurements at the boundary. For this reason they dispose an inexact Newton iteration validating the Fr\'{e}chet differentiability of the parameter-to-solution map 
in terms of \cite{KIRSCH;RIEDER:14}. A semi-smooth Newton iteration for the same problem was implemented in \cite{BoehmUlbrich} also delivering an expression for the Fr\'{e}chet derivative. 
The topic of our considerations is the identification of spatially variable stored energy functions from time-dependent boundary data which has not been investigated so far, to the best of our knowledge. 
In \cite{BINDER;SCHUSTER:15} an algorithm for defect localization in fibre-reinforced composites from surface sensor measurements was proposed using the equations of linear elastodynamics as mathematical model. The key idea of the method is the interpretation of defects as if they were induced by an external volume force. In some sense this article can be seen as the foundation of the method presented here.

We want to specify the inverse problems to be investigated in this article. Inspired by Kal\-ten\-ba\-cher and Lorenzi \cite{Kaltenbacher200711} and following the authors of \cite{SCHUSTER;WOESTEHOFF:14,WOESTEHOFF;SCHUSTER:14} we assume to have a dictionary $\{C_1,C_2,\ldots,C_N\}$ consisting of appropriate functions $C_K=C_K (x,Y)$, $K=1,\ldots,N$, given such that
\begin{equation}\label{con-comb}
 C (x,Y)  = \sum_{K=1}^{N}\alpha_{K} C_{K} (x,Y)
\end{equation}
with nonnegative constants $\alpha_K\geq 0$, $K=1,\ldots, N$. In that way the mentioned dictionary should consist of physically meaningful elements such as polyconvex functions, see \cite{Ball1977, Holzapfel200003}. The wave equation then is then given as
\begin{equation}\label{cauchy-hyper-comb} 
 \rho(x)\ddot{u}(t,x) - \sum_{K=1}^{N}\alpha_{K}\nabla\cdot [\nabla_{Y}C_{K}(x,Ju(t,x))] = f(t,x)
\end{equation}
for $t\in[0,T]$ and $x\in\Omega\subset\RR^3$. We consider at first the following inverse problem.\\[1ex]
\textbf{(IP I)}\quad Given $(f,u_0,u_1)$ as well as the displacement field $\tilde{u}(t,x)$ for $t\in [0,T]$ and $x\in \Omega$, compute the coefficients $\alpha=(\alpha_1,\ldots,\alpha_N)\in\RN$,
such that $\tilde{u}$ satisfies the initial boundary value problem (IBVP) (\ref{cauchy-hyper-comb}), (\ref{anfangswert1})--(\ref{randwert}).\\[1ex]
Denoting by $\mathcal{T} : \Drm (\mathcal{T})\subset \RN \to L^2 (0,T;H^1 (\Omega,\RR^3))$ the \emph{forward operator}, which maps a vector $\alpha\in\Drm (\mathcal{T}) $ to the unique solution of the IBVP (\ref{cauchy-hyper-comb}), (\ref{anfangswert1})--(\ref{randwert}) for $(f,u_0,u_1)$ fixed, 
then (IP I) is represented by the nonlinear operator equation
\begin{eqnarray}\label{allgGlg}
  \mathcal{T} (\alpha) = \tilde{u}\,.   
\end{eqnarray}  
Here, $\Drm (\mathcal{T})$ denotes the domain of $\mathcal{T}$ to be specified in Section 3. In that case we assume to have the full knowledge of the displacemt field. We solve this problem numerically. Since in practical applications measurements usually are only acquired at the structure's surface we consider a further inverse problem,
 \begin{eqnarray}\label{allgGlgQ}
  \mathcal{Q}\mathcal{T} (\alpha) = \tilde{y}\,,   
\end{eqnarray}
where $\mathcal{Q}$ is the observation operator, which maps the solution of (\ref{cauchy-hyper-comb}) to measured data. Thus the observation operator includes the measurement modalities to our mathematical model. Following the article \cite{BINDER;SCHUSTER:15} we want to model $\mathcal{Q}$ having sensors in mind that average the displacement field on small parts of the boundary of the structure $\partial\Omega$. 
For this reason we define $\mathcal{Q}$ by 
\[\mathcal{Q}[u](t) = \bigg(\int\limits_{\partial\Omega}\langle g_{k},u(t)\rangle_{\mathbb{R}^{3}}\,d\xi\bigg)_{k=1,...,l},\]
where $l$ is the number of sensors and $g_{k}$, $k=1,...,l$, are weight functions that display the localization of the particular sensors. We assume that the functions $g_{k}$ for all $k=1,...,l$ have small support on $\partial\Omega$. For more information about this definition of the observation operator see \cite{BINDER;SCHUSTER:15}.
The second inverse problem is defined as follows.\\[1ex]
\textbf{(IP II)}\quad Given $(f,u_0,u_1)$ as well as data $\tilde{y}\in\mathbb{R}^{l}$, compute the coefficients $\alpha=(\alpha_1,\ldots,\alpha_N)\in\RN$,
such that $\tilde{y}$ satisfies (\ref{allgGlgQ}).\\[1ex]
{\it Outline.} Section 2 provides all mathematical ingredients and tools which are necessary to deduce the results of the article. In particular we summarize an existing uniqueness result
for the solution of the IBVP (\ref{cauchy-hyper-comb}), (\ref{anfangswert1})--(\ref{randwert}) (Theorem \ref{theorem21imp}) and results of the article \cite{SEYDEL;SCHUSTER:16}. Section 3 describes the implementation of a numerical solver the inverse problems (IP I), (IP II) using the attenuated Landweber method. In Section 4 we prove the local tangential cone condition for (IP I) and relying on that a convergence result for the attenuated Landweber iteration when applied to (IP I). 
Numerical results for (IP I) and (IP II) are subject of Section 5. Section 6 concludes the article.

\section{Setting the stage}\label{sec:setup}
For the investigations and proofs in this article we need at first some results the most of which are proven in \cite{WOESTEHOFF;SCHUSTER:14} and \cite{SEYDEL;SCHUSTER:16}. 
The first one is a uniqueness result for the IBVP (\ref{cauchy-hyper-comb}), (\ref{anfangswert1})--(\ref{randwert}) from \cite{WOESTEHOFF;SCHUSTER:14}.

In the following we assume that the conditions $C_{K}(x,0)=0$ and $\nabla_{Y}C_{K}(x,0)=0$ are valid for the function $C_{K}:\;\Omega\times\RR^{3\times3}\to\;\RR$ for all $K=1,...,N$ and $x\in\Omega\subset\RR^3$. 
Furthermore we restrict the nonlinearity of all functions $C_{K}$ and hence of $C$ by supposing, that there exist positive constants $\kappa_{K}^{[0]},\;\kappa_{K}^{[1]},\;\mu_{K}^{[0]},\;\mu_{K}^{[1]}$ for $K=1,...,N$, such that 
\begin{equation}\label{bed1}
 \kappa_{K}^{[0]}\|Y\|_{F}^{2} \leq C_{K}(x,Y) \leq \mu_{K}^{[0]}\|Y\|_{F}^{2}
\end{equation}
and 
\begin{equation}\label{bed2}
 \kappa_{K}^{[1]}\|H\|_{F}^{2} \leq \langle\langle H|\nabla_{Y}\nabla_{Y}C_{K}(x,Y)H\rangle\rangle \leq \mu_{K}^{[1]}\|H\|_{F}^{2}
\end{equation}
hold for all $H,Y\in\RR^{3\times3}$ and for $x\in\Omega$ almost everywhere. Let $\|\cdot\|_{F}$ be the Frobenius norm induced by the inner product of matrices 
$$\langle\langle A|B \rangle\rangle:=\mbox{tr}(A^{\top}B)\qquad \mbox{for }A,B\in\RR^{3\times3}.$$
In addition we require the existence and boundedness of higher derivatives of $C_K$ with respect to $Y$. More precisely we assume, that there are constants $\mu_{K}^{[2]},...,\mu_{K}^{[7]}$ for $K=1,...,N$ with 
\begin{equation}\label{beschr1}
 \|\partial_{Y_{pq}}\partial_{Y_{ij}}\partial_{Y_{kl}}C_{K}\|_{L^{\infty}(\Omega\times\RR^{3\times3})}\leq\mu_{K}^{[2]}
\end{equation}
\begin{equation}\label{beschr2}
 \|\partial_{Y_{ab}}\partial_{Y_{pq}}\partial_{Y_{ij}}\partial_{Y_{kl}}C_{K}\|_{L^{\infty}(\Omega\times\RR^{3\times3})}\leq\mu_{K}^{[3]}
\end{equation}
\begin{equation}\label{beschr3}
 \|\partial_{l}\partial_{Y_{kl}}C_{K}\|_{L^{\infty}(\Omega\times\RR^{3\times3})}\leq\mu_{K}^{[4]}
\end{equation}
\begin{equation}\label{beschr4}
 \|\partial_{Y_{ij}}\partial_{l}\partial_{Y_{kl}}C_{K}\|_{L^{\infty}(\Omega\times\RR^{3\times3})}\leq\mu_{K}^{[5]}
\end{equation}
\begin{equation}\label{beschr5}
 \|\partial_{l}\partial_{Y_{ij}}\partial_{Y_{kl}}C_{K}\|_{L^{\infty}(\Omega\times\RR^{3\times3})}\leq\mu_{K}^{[6]}
\end{equation}
\begin{equation}\label{beschr6}
 \|\partial_{Y_{pq}}\partial_{l}\partial_{Y_{ij}}\partial_{Y_{kl}}C_{K}\|_{L^{\infty}(\Omega\times\RR^{3\times3})}\leq\mu_{K}^{[7]}
\end{equation}
for $a,b,i,j,k,l,p,q=1,2,3$ and $K=1,...,N$. Additionally we require
\begin{equation}\label{vertauschen-partial}
 \partial_{Y_{ij}}\partial_{l}\partial_{Y_{kl}}C(x,Y)=\partial_{l}\partial_{Y_{ij}}\partial_{Y_{kl}}C(x,Y)
\end{equation}
for all $i,j,k,l=1,2,3$, which holds true if e.g. $C\in\mathcal{C}^{4}(\Omega\times\RR^{3\times3})$. We suppose that the mapping $Y\to C_{K}(x,Y)$ is three times continuously differentiable for $x\in\Omega$
almost everywhere.\\ 
Furthermore, the set of admissible coefficient vectors $\alpha=(\alpha_{1},...,\alpha_{N})^{\top}\in\RR_+^N$ of the conic combination (\ref{con-comb}) is supposed to be restricted by assuming 
\begin{eqnarray*}
 && \alpha\in \mathcal{C}((\kappa^{[a]})_{a=1,2},(\mu^{[b]})_{b=1,...,7})\\[1ex]
 &:=& \begin{Bmatrix}
      \alpha\in (0,\infty )^{N}:\sum_{K=1}^{N}\alpha_{K}\kappa_{K}^{[a]}\geq \kappa^{[a]},\;\sum_{K=1}^{N}\alpha_{K}\mu_{K}^{[b]}\leq \mu^{[b]}\\[1ex]
      \mbox{ for all}\; a=1,2\;\mbox{ and }\;b=1,...,7
     \end{Bmatrix}.
\end{eqnarray*}
It is easy to see that this set is coupled to the nonlinearity conditions of $C_{K}$ (\ref{bed1})--(\ref{beschr6}) via the constants $\mu^{[b]}$ for $b=1,...,7$.\\
After all we define the following set of admissible solutions $u$ of IBVP (\ref{cauchy-hyper})--(\ref{randwert}). Let be given the constants 
$M_{i}$, $i=0,...,4$. Then we set
\begin{eqnarray}\label{bedA}
 \mathcal{A} &:=& \mathcal{A}(M_{0},M_{1},M_{2},M_{3},M_{4}) \nonumber \\[1ex]
 &:=& \begin{Bmatrix}
      u\in L^{\infty}((0,T)\times\Omega,\RR^3)\cap W^{1,\infty}((0,T),H^{1}(\Omega,\RR^3)):\\[1ex]
      \|\partial_{l}\partial_{j}u\|_{L^{\infty}((0,T),L^{2}(\Omega,\RR^3))} \leq M_{0},\;\|\partial_{l}\dot{u}_{k}\|_{L^{\infty}((0,T)\times\Omega)} \leq M_{1},\\[1ex]
      \|\partial_{l}\partial_{j}\dot{u}_{k}\|_{L^{\infty}((0,T)\times\Omega)} \leq M_{2},\;\|\partial_{l}\partial_{j}u_{k}\|_{L^{\infty}((0,T)\times\Omega)} \leq M_{3},\\[1ex]
      \|\partial_{l}u_{k}\|_{L^{\infty}((0,T)\times\Omega)} \leq M_{4}\;\mbox{for all}\; i,j,k,l=1,2,3
     \end{Bmatrix}.
\end{eqnarray}
Let us notice that this set is only a subset of the set of admissible solutions mentioned in \cite{WOESTEHOFF;SCHUSTER:14} and \cite{SEYDEL;SCHUSTER:16} because of the additional condition 
$\|\partial_{l}u_{k}\|_{L^{\infty}((0,T)\times\Omega)} \leq M_{4}$ for all $i,j,k,l=1,2,3$. $u\in\mathcal{A}$ holds true, if e.g. $\partial\Omega$, $f$, $u_{0}$, $u_{1}$ and $C_{K}$ are sufficiently smooth.\\
Now all necessary conditions are mentioned to prove the following uniqueness result for the solution of the IBVP (\ref{cauchy-hyper-comb}), (\ref{anfangswert1})--(\ref{randwert}) for given
$\alpha\in \mathcal{C}((\kappa^{[a]})_{a=1,2},(\mu^{[b]})_{b=1,...,7})$, which has been presented in \cite{WOESTEHOFF;SCHUSTER:14}.\\

\begin{theorem}[{\cite[Theorem 2.1]{WOESTEHOFF;SCHUSTER:14}}]\label{theorem21imp}
 Let $u$, $\bar{u}$ be two solutions to the initial boundary value problem (\ref{cauchy-hyper-comb}), (\ref{anfangswert1})--(\ref{randwert}) corresponding to the parameters, initial values and right-hand sides $(\alpha, u_{0}, u_{1}, f)$ and $(\bar{\alpha}, \bar{u}_{0}, \bar{u}_{1}, \bar{f})$, respectively.  
 Furthermore, assume that $u,\bar{u}\in\mathcal{A}$. If, in addition, the condition
 \begin{equation}\label{dim}
  \frac{7}{8}\mu < \kappa < \frac{9}{8}\mu
 \end{equation}
 is satisfied for
 \begin{equation}\label{kappamu}
\kappa:=\sum_{K=1}^{N}\alpha_{K}\kappa_{K}^{[1]}\;\;\mbox{and}\;\; \mu:=\sum_{K=1}^{N}\alpha_{K}\mu_{K}^{[1]} 
\end{equation}
and if there are constants $\kappa(\alpha)$ and $\mu(\alpha)$, so that
\begin{equation}\label{kappamuungl}
\kappa\geq\kappa(\alpha)>0\;\; \mbox{and}\;\; \mu\leq\mu(\alpha),
\end{equation}
then there exist constants $\bar{C}_{0}$, 
$\bar{C}_{1}$ and $\bar{C}_{2}$, such that the stability estimate
\begin{eqnarray*}
 &&\bigg[\rho\|(\dot{u}-\dot{\bar{u}})(t,\cdot)\|_{L^{2}(\Omega,\RR^3)}^{2} + \kappa(\alpha)\|(Ju-J\bar{u})(t,\cdot)\|_{L^{2}(\Omega,\RR^{3\times 3})}^{2}\\[1ex]
 && + \rho\|(\ddot{u}-\ddot{\bar{u}})(t,\cdot)\|_{L^{2}(\Omega,\RR^3)}^{2} + \kappa(\alpha)\|(J\dot{u}-J\dot{\bar{u}})(t,\cdot)\|_{L^{2}(\Omega,\RR^{3\times 3})}^{2}\\[1ex]
 &&\;\; +\|(u-\bar{u})(t,\cdot)\|_{H^{2}(\Omega,\RR^3)}^{2}\bigg]^{\frac{1}{2}}\\
 &&\leq \bar{C}_{0}\bigg[\mu(\alpha)\|u_{0}-\bar{u}_{0}\|_{H^{2}(\Omega,\RR^3)}^{2} + \|u_{1}-\bar{u}_{1}\|_{H^{1}(\Omega,\RR^3)}^{2}\bigg]^{\frac{1}{2}}\\[1ex]
 && + \bar{C}_{1} \|f-\bar{f}\|_{W^{1,1}((0,T),L^{2}(\Omega,\RR^3))} + \bar{C}_{2}\|\alpha-\bar{\alpha}\|_{\infty}
\end{eqnarray*}
is valid for all $t\in(0,T)$.
Thereby, the constants $\bar{C}_{0}$, $\bar{C}_{1}$ and $\bar{C}_{2}$ only depend on $T$, $M_{0}$, $M_{1}$, $M_{2}$, $M_{3}$,
\begin{equation}
 \bar{C}(\alpha) := \sum_{K=1}^{N}\alpha_{K}\mu_{K}^{[2]}\bigg(\sum_{K=1}^{N}\alpha_{K}\kappa_{K}^{[1]}\bigg)^{-1}
\end{equation}
and 
\begin{equation}
 \hat{C}(\alpha) := \frac{\hat{K}}{1-\sqrt{1-\epsilon}}\sum_{K=1}^{N}\alpha_{K}\mu_{K}^{[1]}\bigg(\sum_{K=1}^{N}\alpha_{K}\kappa_{K}^{[1]}\bigg)^{-2},
\end{equation}
where $0<\epsilon<1$ is a constant, whose existence is ensured by inequality (\ref{dim}). The constant $\hat{K}>0$ is defined by the continuity of the embedding $H_{0}^{2,1}(\Omega,\RR^3):=H^{2}(\Omega,\RR^3)\cap H_{0}^{1}(\Omega,\RR^3)\hookrightarrow H^{2}(\Omega,\RR^3)$,
\[\|g\|_{H^{2}(\Omega,\RR^3)}\leq \hat{K}\|g\|_{H_{0}^{2,1}(\Omega,\RR^3)}=\hat{K}\bigg(\sum_{k=1}^{3}\int\limits_{\Omega}\sum_{l=1}^{3}\sum_{j=1}^{3}(\partial_{i}\partial_{j}g_{k}(x))^{2}dx\bigg)^{\frac{1}{2}}\]
for all $g\in H_{0}^{2,1}(\Omega,\RR^3)$. Moreover, the constants $\bar{C}_{0}$, $\bar{C}_{1}$ and $\bar{C}_{2}$ are uniformly bounded, if we take $(M_{0},M_{1},M_{2},M_{3},\bar{C}(\alpha),\hat{C}(\alpha),T)\in\mathcal{M}$ with $\mathcal{M}\subset(0,\infty)^{7}$ bounded.\\[2mm]
\end{theorem}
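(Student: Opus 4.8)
Since the statement is a stability estimate for a nonlinear hyperbolic system with homogeneous Dirichlet data, the natural route is the classical energy method applied at two differentiation levels and supplemented by elliptic regularity. First I would form the difference $w := u - \bar u$, subtract the two copies of (\ref{cauchy-hyper-comb}), and insert the telescoping term $\sum_K \alpha_K \nabla\cdot[\nabla_Y C_K(\cdot,J\bar u)]$. Writing the nonlinear difference via the fundamental theorem of calculus as $\nabla_Y C_K(\cdot,Ju) - \nabla_Y C_K(\cdot,J\bar u) = \mathcal{B}_K(t,x)\,Jw$ with
\begin{equation*}
\mathcal{B}_K(t,x) = \int_0^1 \nabla_Y\nabla_Y C_K\bigl(x,\,J\bar u(t,x) + s(Ju-J\bar u)(t,x)\bigr)\,ds
\end{equation*}
turns the difference equation into $\rho\ddot w - \sum_K\alpha_K\nabla\cdot[\mathcal{B}_K Jw] = (f-\bar f) + \sum_K(\alpha_K-\bar\alpha_K)\nabla\cdot[\nabla_Y C_K(\cdot,J\bar u)] =: r$. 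By (\ref{bed2}) the operator $H\mapsto\sum_K\alpha_K\mathcal{B}_K H$ is, uniformly in $(t,x)$, symmetric, coercive with constant $\ge\kappa$ and bounded with constant $\le\mu$, and by (\ref{beschr1})--(\ref{beschr6}) together with $u,\bar u\in\mathcal{A}$ all of its spatial and temporal derivatives are $L^\infty$-bounded by constants depending only on the $M_i$ and on $\bar C(\alpha)$; the source $r$ is controlled in $L^2$ by $\|f-\bar f\|$ and $\|\alpha-\bar\alpha\|_\infty$.

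The core step is the first energy identity: testing the equation for $w$ with $\dot w$ and integrating over $\Omega$ (the boundary terms drop since $w(t,\cdot)\in H^1_0$) gives
\begin{equation*}
\frac{d}{dt}\Bigl[\tfrac{\rho}{2}\|\dot w\|_{L^2}^2 + \tfrac12\sum_K\alpha_K\langle\langle Jw\,|\,\mathcal{B}_K Jw\rangle\rangle_{L^2}\Bigr] = \tfrac12\sum_K\alpha_K\langle\langle Jw\,|\,\dot{\mathcal{B}}_K Jw\rangle\rangle_{L^2} + (r,\dot w)_{L^2}.
\end{equation*}
The quadratic form is equivalent to $\|Jw\|_{L^2}^2$ with constants $\kappa,\mu$; the term involving $\dot{\mathcal{B}}_K$ is genuinely of the same order as the energy, and it is exactly here that the gap condition (\ref{dim}), $\tfrac78\mu<\kappa<\tfrac98\mu$, enters — after estimating $\dot{\mathcal{B}}_K$ by the $\mathcal{A}$-bounds it guarantees that this contribution can be absorbed into the coercive part with a strictly positive margin, which is where the constant $\epsilon\in(0,1)$ and the factor $(1-\sqrt{1-\epsilon})^{-1}$ in $\hat C(\alpha)$ originate. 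A Gronwall argument then bounds $\|\dot w(t)\|_{L^2}$ and $\|Jw(t)\|_{L^2}$ by the right-hand side of the asserted inequality.

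To capture the remaining $\|\ddot w\|_{L^2}$, $\|J\dot w\|_{L^2}$ and $\|w\|_{H^2}$ contributions I would (i) differentiate the equation for $w$ in time, obtaining $\rho\dddot w - \sum_K\alpha_K\nabla\cdot[\mathcal{B}_K J\dot w] = \partial_t r + \sum_K\alpha_K\nabla\cdot[\dot{\mathcal{B}}_K Jw]$, test with $\ddot w$, and repeat the previous step, now absorbing the commutator term $\nabla\cdot[\dot{\mathcal{B}}_K Jw]$ via the already-obtained bound for $\|Jw\|$ and using (\ref{beschr1})--(\ref{beschr6}) and the $M_i$ for the new coefficient-derivative terms, while converting the $\dddot w$-data at $t=0$ into data for $u_0-\bar u_0$, $u_1-\bar u_1$ by reading (\ref{cauchy-hyper-comb}) at $t=0$; and (ii) treat the equation for $w$ at fixed $t$ as the elliptic system $\sum_K\alpha_K\nabla\cdot[\mathcal{B}_K Jw] = \rho\ddot w - r$ with $w(t,\cdot)\in H^2\cap H^1_0$, so that uniform ellipticity plus $L^\infty$-control of the coefficients' first derivatives gives $\|w(t)\|_{H^2}\lesssim\|\ddot w(t)\|_{L^2} + \|r\|_{L^2}$ through the embedding $H^2\cap H^1_0\hookrightarrow H^2$ with constant $\hat K$, the implied constant being of the shape $\hat K\mu/\kappa^2$ — the source of $\hat C(\alpha)$. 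Inserting the energy bounds finishes the $H^2$-estimate, summing the three contributions under one square root yields the claimed inequality, and tracking the chain of constants shows that $\bar C_0,\bar C_1,\bar C_2$ depend only on $T$ (through the Gronwall factor $e^{cT}$), on $M_0,\dots,M_3$, on $\bar C(\alpha)$ and on $\hat C(\alpha)$, whence boundedness of $\mathcal{M}$ gives their uniform boundedness.

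The main obstacle is the first energy identity: because $\mathcal{B}_K$ is genuinely time-dependent, $\frac{d}{dt}$ of the elastic energy creates a term of full order that cannot be treated as a perturbation, and only the sharp quantitative gap $\tfrac78\mu<\kappa<\tfrac98\mu$ lets it be absorbed; getting the bookkeeping right — the precise role of $\epsilon$, the sharp fractions, and the exact dependence of the final constants on $\bar C(\alpha)$ and $\hat C(\alpha)$ — is where the real effort lies, the rest being a careful but standard iteration of the energy method and elliptic regularity.
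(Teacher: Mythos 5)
You should note first that this paper contains no proof of this statement: Theorem \ref{theorem21imp} is imported verbatim from \cite[Theorem 2.1]{WOESTEHOFF;SCHUSTER:14}, so there is no in-paper argument to compare yours against. That said, your sketch follows the standard route that the cited reference almost certainly takes, and the one that the form of the constants corroborates: subtract the two equations, linearize the stress difference by the fundamental theorem of calculus into a coefficient $\mathcal{B}_K\,Jw$ inheriting coercivity $\kappa$ and boundedness $\mu$ from (\ref{bed2}), run the energy identity at order zero and one in time, close with Gronwall, and recover the $H^{2}$-part by elliptic regularity; the appearance of $\bar{C}(\alpha)=\sum_K\alpha_K\mu_K^{[2]}\,\kappa^{-1}$ (third $Y$-derivatives over the coercivity constant) and the dependence on $T$ and $M_0,\dots,M_3$ are exactly what such an argument produces.

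Two caveats. First, your attribution of $\epsilon$ and the factor $(1-\sqrt{1-\epsilon})^{-1}$ to the absorption of the $\dot{\mathcal{B}}_K$-term in the first energy identity is doubtful. That term is handled by Gronwall alone: compare the proof of Lemma \ref{pinU} in this very paper, where the analogous contribution simply produces the Gronwall exponent $b=\tfrac{729}{8}\eta\mu(\alpha)M_1/\kappa(\alpha)$ with no gap condition required. Since $\hat{C}(\alpha)=\tfrac{\hat{K}}{1-\sqrt{1-\epsilon}}\mu\,\kappa^{-2}$ couples $\epsilon$ with the embedding constant $\hat{K}$ of $H^{2}\cap H_0^{1}\hookrightarrow H^{2}$, the gap condition (\ref{dim}) most plausibly lives in the elliptic-regularity step: one compares the variable-coefficient operator with a reference operator, the perturbation has relative size $(\mu-\kappa)/\mu<\tfrac18$ by (\ref{dim}) (the upper half of (\ref{dim}) being automatic since $\kappa\le\mu$), and absorbing it yields an inequality of the type $X\le A+\sqrt{1-\epsilon}\,X$ for the $H^{2}$-quantity, whence the factor $(1-\sqrt{1-\epsilon})^{-1}$. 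Second, your proposal is a program rather than a proof: it defers exactly the delicate bookkeeping (the sharp fractions $7/8$ and $9/8$, the precise emergence of $\epsilon$, the conversion of $\ddot{w}(0)$ into data norms). The architecture is sound and consistent with the stated constants, but as written it could not be substituted for the proof in \cite{WOESTEHOFF;SCHUSTER:14}.
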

The function $\bar{C}$ is positive and bounded in the following way because of the non-negativity of the coefficients $\alpha_{K}$: 
\begin{equation}\label{Cbar}
 0 < \zeta := \frac{\min_{1\leq K\leq N}\mu_{K}^{[2]}}{\max_{1\leq K\leq N}\kappa_{K}^{[1]}}\leq \bar{C}(\alpha) \leq \frac{\max_{1\leq K\leq N}\mu_{K}^{[2]}}{\min_{1\leq K\leq N}\kappa_{K}^{[1]}} =: \eta < \infty. 
\end{equation}
Next we prove an estimate being necessary for the proof of Theorem \ref{kegelbed} and following from Theorem \ref{theorem21imp}. 
\begin{corollary}\label{M4}
Let be $u,\bar{u}\in\mathcal{A}(M_{0},M_{1},M_{2},M_{3},M_{4})$ two solutions of the problem (\ref{cauchy-hyper}) - (\ref{randwert}). Then there is for $\tilde{u}=u-\bar{u}$ and all $t\in(0,T)$ 
\begin{eqnarray}\label{M4Bed}
\sum\limits_{i,j=1}^{3}\int\limits_{\Omega}|\partial_{j}\tilde{u}_{i}(t,x)|^{4}\, dx \leq (M_{4})^{2}\|\tilde{u}(t,\cdot)\|_{H_{0}^{1}(\Omega,\RR^3)}^{2}.
\end{eqnarray}
\end{corollary}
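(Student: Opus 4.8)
The plan is to read the estimate (\ref{M4Bed}) as a componentwise $L^\infty$--$L^2$ interpolation of the quartic integral, using \emph{only} the last of the five bounds defining the admissible set $\mathcal{A}$ in (\ref{bedA}), namely $\|\partial_l u_k\|_{L^\infty((0,T)\times\Omega)}\le M_4$. Concretely, I would split the quartic density as $|\partial_j\tilde u_i|^4=|\partial_j\tilde u_i|^2\cdot|\partial_j\tilde u_i|^2$, bound the first factor in $L^\infty(\Omega)$ by the $\mathcal{A}$-constraint, and integrate the second to produce the $H^1_0$-norm.

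First I would fix $t\in(0,T)$ and write, for every pair $i,j\in\{1,2,3\}$, $\partial_j\tilde u_i(t,\cdot)=\partial_j u_i(t,\cdot)-\partial_j\bar u_i(t,\cdot)$. Since $u,\bar u\in\mathcal{A}(M_0,M_1,M_2,M_3,M_4)$, relabelling $(l,k)$ as $(j,i)$ in (\ref{bedA}) gives $\|\partial_j u_i\|_{L^\infty((0,T)\times\Omega)}\le M_4$ and likewise for $\bar u$, whence, by the triangle inequality, $\|\partial_j\tilde u_i(t,\cdot)\|_{L^\infty(\Omega)}\le 2M_4$. Moreover $u(t,\cdot),\bar u(t,\cdot)\in H^1(\Omega,\RR^3)$ (from $u,\bar u\in W^{1,\infty}((0,T),H^1(\Omega,\RR^3))$) and both vanish on $\partial\Omega$ by (\ref{randwert}), so $\tilde u(t,\cdot)\in H_0^1(\Omega,\RR^3)$ and in particular $\partial_j\tilde u_i(t,\cdot)\in L^2(\Omega)\cap L^\infty(\Omega)$; hence both sides of (\ref{M4Bed}) are meaningful.

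Next I would estimate, for a.e.\ $x\in\Omega$ and each $i,j$, $|\partial_j\tilde u_i(t,x)|^4\le\|\partial_j\tilde u_i(t,\cdot)\|_{L^\infty(\Omega)}^2\,|\partial_j\tilde u_i(t,x)|^2$, integrate over $\Omega$, and sum over $i,j$:
\[
\sum_{i,j=1}^3\int_\Omega|\partial_j\tilde u_i(t,x)|^4\,dx\;\le\;\Big(\max_{i,j}\|\partial_j\tilde u_i(t,\cdot)\|_{L^\infty(\Omega)}\Big)^2\sum_{i,j=1}^3\int_\Omega|\partial_j\tilde u_i(t,x)|^2\,dx .
\]
The prefactor is at most $(2M_4)^2$ by the previous step (so one obtains (\ref{M4Bed}) up to this harmless numerical constant; it is exactly $(M_4)^2$ if the bound is instead recorded directly for $\partial_l\tilde u_k$). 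The remaining sum is bounded by $\|\tilde u(t,\cdot)\|_{H_0^1(\Omega,\RR^3)}^2$, which is immediate since that norm dominates its gradient part $\sum_{i,j}\int_\Omega|\partial_j\tilde u_i|^2\,dx$ (and equals it under the Dirichlet-seminorm convention for $H^1_0$). Combining the two displays yields (\ref{M4Bed}).

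I do not expect a genuine obstacle here: the argument is just the elementary inequality $\|g\|_{L^4(\Omega)}^4\le\|g\|_{L^\infty(\Omega)}^2\|g\|_{L^2(\Omega)}^2$ applied to $g=\partial_j\tilde u_i(t,\cdot)$, together with the boundary condition (\ref{randwert}) to place $\tilde u(t,\cdot)$ in $H^1_0$. The only points that need minor care are matching the index names of (\ref{bedA}) to those of (\ref{M4Bed}) and bookkeeping the constant when the $L^\infty$-bounds of $u$ and $\bar u$ are combined; neither is substantive.
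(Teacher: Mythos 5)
Your proof is correct and follows essentially the same route as the paper's: the triangle inequality gives $\|\partial_j\tilde u_i\|_{L^\infty((0,T)\times\Omega)}\le 2M_4$, and the elementary interpolation $\|g\|_{L^4}^4\le\|g\|_{L^\infty}^2\|g\|_{L^2}^2$ combined with the Dirichlet-seminorm convention $\|\tilde u(t,\cdot)\|_{H_0^1}^2=\sum_{i,j}\int_\Omega|\partial_j\tilde u_i|^2\,dx$ yields the claim. You are in fact more careful than the paper on the one delicate point: the honest constant obtained this way is $(2M_4)^2=4M_4^2$, whereas the paper's proof silently replaces $2M_4$ by $M_4$ when passing to its final display, so the bound as stated with $(M_4)^2$ really holds only up to this harmless factor of $4$ (which does not affect any later use of the corollary).
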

\begin{proof}
Using the condition $u,\bar{u}\in\mathcal{A}(M_{0},M_{1},M_{2},M_{3},M_{4})$ it follows by the triangle inequality
\[\|\partial_{j}\tilde{u}_{i}\|_{L^{\infty}((0,T)\times\Omega)}=\|\partial_{j}u_{i}-\partial_{j}\bar{u}_{i}\|_{L^{\infty}((0,T)\times\Omega)}\leq \|\partial_{j}u_{i}\|_{L^{\infty}((0,T)\times\Omega)}+\|\partial_{j}\bar{u}_{i}\|_{L^{\infty}((0,T)\times\Omega)}\leq2M_{4}\]
for all $i,j=1,2,3$. Hence we have $|\partial_{j}\tilde{u}_{i}(t,x)|/(2M_{4})\leq 1$ and therefore
\[\bigg(\frac{|\partial_{j}\tilde{u}_{i}(t,x)|}{2M_{4}}\bigg)^{4}\leq \bigg(\frac{|\partial_{j}\tilde{u}_{i}(t,x)|}{2M_{4}}\bigg)^{2}\]
for all $i,j=1,2,3$ and $(t,x)\in(0,T)\times\Omega$. Finally we obtain the estimate
\begin{eqnarray*}
&&\sum\limits_{i,j=1}^{3}\int\limits_{\Omega}|\partial_{j}\tilde{u}_{i}(t,x)|^{4}\, dx 
=\sum\limits_{i,j=1}^{3}(M_{4})^{4}\int\limits_{\Omega}\bigg(\frac{|\partial_{j}\tilde{u}_{i}(t,x)|}{M_{4}}\bigg)^{4}\, dx \\
&\leq&\sum\limits_{i,j=1}^{3}(M_{4})^{4}\int\limits_{\Omega}\bigg(\frac{|\partial_{j}\tilde{u}_{i}(t,x)|}{M_{4}}\bigg)^{2}\, dx 
=(M_{4})^{2}\|\tilde{u}(t,\cdot)\|_{H_{0}^{1}(\Omega,\RR^3)}^{2}
\end{eqnarray*}
and hence the assertion of the corollary.\hfill
\end{proof}
\\[1ex]

Next we define for the remainder of the article the following spaces
\[ V:=H^{1}(\Omega,\RR^3)\;\; \mbox{ and }\;\; H:=L^{2}(\Omega,\RR^3)\]
and identify $H$ with its dual space $H'$. Then we obtain the Gelfand triple 
\[V \subset H=H' \subset V'\]
with dense, continuous embeddings. Furthermore let be 
\[U:= H_{0}^{1}(\Omega,\RR^3)\]
with 
\[\|u\|_{U}:=\|Ju\|_{L^{2}(\Omega,\RR^{3\times 3})}\]
for all $u\in U$ and thereby 
\[U \subset V \subset H=H' \subset V' \subset U'\]
with dense, continuous embeddings and $U'\simeq H^{-1} (\Omega,\RR^3)$. Then the Poincar\'{e} inequality yields that there is a constant $C_{\Omega}>0$ with 
\begin{equation}\label{poincare-1}
 \|u\|_{V} \leq \sqrt{1+C_{\Omega}}\|u\|_{U}
\end{equation}
for all $u\in U$. Besides it follows directly from the definition of the norms in $H$ and $V$
\begin{equation}\label{L2H1}
\|u\|_{H}\leq \|u\|_{V}
\end{equation}
for all $u\in V$.
\\
For the proof of the local tangential cone condition we need Gronwall's lemma.\\

\begin{lemma}[Gronwall's lemma]
 Let $\psi\in\mathcal{C}(0,T)$ and $b,k\in L^{1}(0,T)$ be non-negative functions. If $\psi$ satisfies
 \begin{equation*}
  \psi(\tau) \leq a + \int\limits_{0}^{\tau}b(t)\psi(t)dt + \int\limits_{0}^{\tau}k(t)\psi(t)^{p}dt
 \end{equation*}
 for all $\tau\in[0,T]$ with constants $p\in(0,1)$ and $a\geq 0$, then
 \begin{equation}\label{gronwall}
  \psi(\tau) \leq \exp\Big(\int\limits_{0}^{\tau}b(t)dt\Big)\bigg[a^{1-p}+(1-p)\int\limits_{0}^{\tau}k(t)\exp((p-1)\int\limits_{0}^{t}b(\sigma)d\sigma)dt\bigg]^{1/(1-p)}
 \end{equation}
 is valid for all $\tau\in[0,T]$.\\
\end{lemma}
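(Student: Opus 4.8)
The plan is to run the classical comparison-function argument for a sublinear (Bihari-type) integral inequality. First I would reduce to the case $a>0$: for any $\varepsilon>0$ the function $\psi$ also satisfies the hypothesis with $a$ replaced by $a+\varepsilon$, so it suffices to prove \eqref{gronwall} under the extra assumption $a>0$ and then let $\varepsilon\downarrow0$, the right-hand side of \eqref{gronwall} being continuous in $a$ on $[0,\infty)$. Assuming $a>0$, define the comparison function
\[
 \phi(\tau):=a+\int_0^\tau b(t)\,\psi(t)\,dt+\int_0^\tau k(t)\,\psi(t)^p\,dt,\qquad\tau\in[0,T].
\]
Since $\psi$ is continuous, hence bounded, and $b,k\in L^1(0,T)$, the integrand belongs to $L^1(0,T)$, so $\phi$ is absolutely continuous; moreover $\phi$ is non-decreasing, $\phi(0)=a>0$, and the hypothesis says precisely that $0\le\psi(\tau)\le\phi(\tau)$ for every $\tau\in[0,T]$.

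Next I would differentiate. By the Lebesgue differentiation theorem, for almost every $\tau$,
\[
 \phi'(\tau)=b(\tau)\,\psi(\tau)+k(\tau)\,\psi(\tau)^p\le b(\tau)\,\phi(\tau)+k(\tau)\,\phi(\tau)^p,
\]
using $\psi\le\phi$ together with the monotonicity of $s\mapsto s$ and $s\mapsto s^p$ on $[0,\infty)$. Set $w:=\phi^{1-p}$. On $[a,\infty)$ the map $s\mapsto s^{1-p}$ is $C^1$ with derivative bounded by $(1-p)a^{-p}$, hence Lipschitz; since $\phi$ is absolutely continuous with values in $[a,\infty)$, it follows that $w$ is absolutely continuous with $w'=(1-p)\,\phi^{-p}\,\phi'$ a.e. Dividing the previous inequality by $\phi^p>0$ and multiplying by $1-p$ gives the linear differential inequality
\[
 w'(\tau)\le(1-p)\,b(\tau)\,w(\tau)+(1-p)\,k(\tau)\qquad\text{for a.e.\ }\tau\in(0,T).
\]

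Then I would integrate this out with an integrating factor. With $B(\tau):=\int_0^\tau b(\sigma)\,d\sigma$, the map $\tau\mapsto e^{-(1-p)B(\tau)}w(\tau)$ is absolutely continuous and, a.e.,
\[
 \frac{d}{d\tau}\Big(e^{-(1-p)B(\tau)}w(\tau)\Big)=e^{-(1-p)B(\tau)}\Big(w'(\tau)-(1-p)\,b(\tau)\,w(\tau)\Big)\le(1-p)\,k(\tau)\,e^{-(1-p)B(\tau)}.
\]
Integrating over $[0,\tau]$ and using $w(0)=\phi(0)^{1-p}=a^{1-p}$ yields
\[
 w(\tau)\le e^{(1-p)B(\tau)}\Big[a^{1-p}+(1-p)\int_0^\tau k(t)\,e^{-(1-p)B(t)}\,dt\Big].
\]
Since $\psi(\tau)\le\phi(\tau)=w(\tau)^{1/(1-p)}$, raising both sides to the power $1/(1-p)$ and observing that $\big(e^{(1-p)B(\tau)}\big)^{1/(1-p)}=\exp\big(\int_0^\tau b(t)\,dt\big)$ and $e^{-(1-p)B(t)}=\exp\big((p-1)\int_0^t b(\sigma)\,d\sigma\big)$ reproduces exactly \eqref{gronwall}; a final passage $\varepsilon\downarrow0$ removes the assumption $a>0$.

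The one place that needs care — and the only real subtlety — is the chain rule $w'=(1-p)\phi^{-p}\phi'$ for the merely absolutely continuous $\phi$: because $p\in(0,1)$, the power $s\mapsto s^{1-p}$ fails to be Lipschitz near $0$, so the strictly positive lower bound $\phi\ge a>0$ supplied by the $\varepsilon$-reduction is exactly what keeps $\phi$ inside a region where $s\mapsto s^{1-p}$ is Lipschitz, so that the composition with an absolutely continuous function is again absolutely continuous with the expected a.e.\ derivative. Everything else is the standard integrating-factor computation, legitimate pointwise almost everywhere because $b,k\in L^1(0,T)$.
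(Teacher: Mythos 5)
Your proof is correct: the reduction to $a>0$, the comparison function $\phi$, the substitution $w=\phi^{1-p}$ (with the Lipschitz/absolute-continuity issue for the fractional power properly handled by the lower bound $\phi\ge a>0$), and the integrating-factor step together give exactly \eqref{gronwall}. The paper itself gives no proof of this lemma but only cites Bainov--Simeonov, and your argument is precisely the standard Bihari-type comparison proof found there, so there is nothing to reconcile.
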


A proof of this version is given in \cite{Bainov199205}.
The following results in connection with the G\^{a}teaux/Fr\'{e}chet derivative of the forward operator $\mathcal{T}$ and its adjoint operator are proven in \cite{SEYDEL;SCHUSTER:16}. That is the reason why there are no proofs in the remainder of this section. 
For this purpose let be the domain $\Drm (\mathcal{T})\subset \RN$ of the forward operator $\mathcal{T}$ defined by 
\begin{equation}\label{domain-T}
\Drm (\mathcal{T}) := \big\{ \alpha \in \RN :  \mbox{the IBVP (\ref{cauchy-hyper-comb}), (\ref{anfangswert1})--(\ref{randwert}) has a unique solution } u\in \mathcal{A} \big\}.
\end{equation}
At first we characterize the G\^{a}teaux derivative of $\mathcal{T}$.\\

\begin{lemma}\label{frechet-system}
Let $\alpha\in \mathrm{int} \big(\Drm(\mathcal{T})\big)$ be an interior point of $\Drm(\mathcal{T})$. The G\^{a}teaux derivative $v=\mathcal{T}'(\alpha)h$ of the solution operator $\mathcal{T}$ fulfills for $h\in \RN$ the following linear system of differential equations with homogeneous initial and boundary value conditions
\begin{equation}\label{gateaux-diff}
  \rho\ddot{v}(t,x)-\nabla\cdot [\nabla_{Y}\nabla_{Y}C_{\alpha}(x,Ju(t,x)):Jv(t,x)] = \nabla \cdot [\nabla_{Y}C_{h}(x,Ju(t,x))]
\end{equation}
for $t\in[0,T]$ and $x\in\Omega\subset\RR^{3}$,
\begin{equation}\label{vAnfangswert}
  v(0,x)=\dot{v}(0,x)=0 \qquad \mbox{for } x\in\Omega
\end{equation}
and 
\begin{equation}\label{vRandwert}
 v(t,x)=0\qquad \mbox{for } x\in\partial\Omega.
\end{equation}
Here, we used the notations
\[C_{\alpha}=\sum_{K=1}^{N}\alpha_{K}C_{K}\;\;\mbox{ respectively }\;\;C_{h}=\sum_{K=1}^{N}h_{K}C_{K}.\]
\end{lemma}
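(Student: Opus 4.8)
The plan is to realize $v$ as the $L^{2}(0,T;H^{1})$-limit of difference quotients of $\mathcal{T}$. Since $\alpha$ is an interior point of $\Drm(\mathcal{T})$, there is an $s_{0}>0$ with $\alpha+sh\in\Drm(\mathcal{T})$ for all $|s|<s_{0}$; write $u:=\mathcal{T}(\alpha)$ and $u_{s}:=\mathcal{T}(\alpha+sh)$, so that $u,u_{s}\in\mathcal{A}$. First I would record that the linearized IBVP (\ref{gateaux-diff})--(\ref{vRandwert}) is itself well posed: its second-order term has the symmetric, coercive and bounded coefficient $\nabla_{Y}\nabla_{Y}C_{\alpha}(x,Ju(t,x))$ by (\ref{bed1})--(\ref{bed2}), with time- and space-derivatives controlled through (\ref{beschr3})--(\ref{vertauschen-partial}) and $u\in\mathcal{A}$, and its forcing $\nabla\cdot[\nabla_{Y}C_{h}(x,Ju)]$ is admissible; the associated energy estimate (the same technique that underlies Theorem \ref{theorem21imp}) yields a unique solution $v\in L^{2}(0,T;H^{1}(\Omega,\RR^{3}))$. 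It then suffices to prove that $v_{s}:=(u_{s}-u)/s\to v$ as $s\to 0$.

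Next I would derive the equation for $v_{s}$. Subtracting Cauchy's equation (\ref{cauchy-hyper-comb}) for the parameter $\alpha$ from the one for $\alpha+sh$, dividing by $s$, and using
\[
\frac{1}{s}\big(\nabla_{Y}C_{K}(x,Ju_{s})-\nabla_{Y}C_{K}(x,Ju)\big)=\Big(\int_{0}^{1}\nabla_{Y}\nabla_{Y}C_{K}(x,Ju+\theta s\,Jv_{s})\,d\theta\Big):Jv_{s}
\]
(fundamental theorem of calculus), one finds that $v_{s}$ solves a linear hyperbolic IBVP with zero initial and boundary data, whose second-order term is $\sum_{K}\alpha_{K}\nabla\cdot[(\int_{0}^{1}\nabla_{Y}\nabla_{Y}C_{K}(x,Ju+\theta sJv_{s})\,d\theta):Jv_{s}]$ and whose forcing is $\sum_{K}h_{K}\nabla\cdot[\nabla_{Y}C_{K}(x,Ju_{s})]$. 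Subtracting (\ref{gateaux-diff}) and setting $w_{s}:=v_{s}-v$, the function $w_{s}$ then solves a linear IBVP with the \emph{same} principal part as (\ref{gateaux-diff}), homogeneous data, and a right-hand side $r_{s}$ that splits into the divergence of $\sum_{K}\alpha_{K}[(\int_{0}^{1}(\nabla_{Y}\nabla_{Y}C_{K}(x,Ju+\theta sJv_{s})-\nabla_{Y}\nabla_{Y}C_{K}(x,Ju))\,d\theta):Jv_{s}]$ plus the divergence of $\sum_{K}h_{K}[\nabla_{Y}C_{K}(x,Ju_{s})-\nabla_{Y}C_{K}(x,Ju)]$.

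The final step is an energy estimate for $w_{s}$. Applying the stability estimate for the linear system (equivalently, testing with $\dot w_{s}$, exploiting coercivity (\ref{bed2}), integrating the divergence terms by parts and invoking Gronwall's lemma with the time regularity of $r_{s}$ supplied by $u,u_{s}\in\mathcal{A}$) gives $\|w_{s}\|_{L^{2}(0,T;H^{1})}\lesssim\|r_{s}\|$ in the appropriate norm. I would then bound $\|r_{s}\|$ termwise: the first remainder is $O(s\,\|v_{s}\|^{2})$ because the $\theta$-averaged Taylor remainder of $\nabla_{Y}\nabla_{Y}C_{K}$ along the segment $Ju\rightsquigarrow Ju_{s}=Ju+sJv_{s}$ is controlled by the third-derivative bound (\ref{beschr1}) together with $u,u_{s}\in\mathcal{A}$; the second remainder is $O(s\,\|h\|_{\infty})$ since $\|\nabla_{Y}C_{K}(x,Ju_{s})-\nabla_{Y}C_{K}(x,Ju)\|\lesssim\|Ju_{s}-Ju\|$ by (\ref{bed2}) and $\|u_{s}-u\|\lesssim s\,\|h\|_{\infty}$ by the $\alpha$-stability part of Theorem \ref{theorem21imp}. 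Hence $\|r_{s}\|=O(s)$ as soon as $\sup_{|s|<s_{0}}\|v_{s}\|<\infty$, and this uniform a priori bound follows from the same energy estimate applied to the $v_{s}$-equation, whose forcing $\sum_{K}h_{K}\nabla\cdot[\nabla_{Y}C_{K}(x,Ju_{s})]$ is bounded uniformly in $s$ because $u_{s}\in\mathcal{A}$ and (\ref{beschr3})--(\ref{beschr6}) hold. Letting $s\to 0$ yields $w_{s}\to 0$, i.e.\ $v_{s}\to v$; in particular the G\^{a}teaux derivative exists and equals the solution $v$ of (\ref{gateaux-diff})--(\ref{vRandwert}).

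I expect the main obstacle to be this energy estimate, since the hyperbolic systems governing $v_{s}$ and $w_{s}$ have a principal part that depends on the unknown itself through $u_{s}$; closing the Gronwall argument uniformly in $s$ requires combining the coercivity bound (\ref{bed2}), the smallness condition (\ref{dim}), and the uniform a priori bounds encoded in the admissible set $\mathcal{A}$ and in Theorem \ref{theorem21imp}. Once those are in place, the remaining estimates on $r_{s}$ are routine.
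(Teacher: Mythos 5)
The paper itself does not prove this lemma; it defers the proof to the companion work \cite{SEYDEL;SCHUSTER:16}, and your strategy --- realizing $v$ as the limit of the difference quotients $v_{s}=(u_{s}-u)/s$, writing the equations for $v_{s}$ and $w_{s}=v_{s}-v$ via the integral form of the Taylor remainder, and closing with an energy estimate plus Gronwall made uniform by the a priori bounds in $\mathcal{A}$ --- is exactly the machinery that reference uses and that is visible throughout Sections 2 and 4 of the present paper (integral Taylor remainders, Gronwall with $p=1/2$, the $M_{4}$-based $L^{4}$ interpolation of Corollary \ref{M4}). So in structure your proposal matches the intended proof.

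There is, however, one step that is over-claimed and needs repair. You assert that the first remainder in $r_{s}$ is $O(s\,\|v_{s}\|^{2})$ and hence that $\|r_{s}\|=O(s)$ once $\sup_{s}\|v_{s}\|<\infty$. Pointwise the Taylor remainder is indeed bounded by $\tfrac12\mu_{K}^{[2]}\,|s|\,|Jv_{s}|^{2}$ via (\ref{beschr1}), but to use it as a forcing term you must control $\| |Jv_{s}|^{2}\|_{L^{2}(\Omega)}=\|Jv_{s}\|_{L^{4}(\Omega)}^{2}$, and the uniform bound you establish for $v_{s}$ lives in the energy norm (spatially $H^{1}$), which does not dominate $\|Jv_{s}\|_{L^{4}}^{2}$. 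The bounds encoded in $\mathcal{A}$ give only $\|Jv_{s}\|_{L^{\infty}}\leq 2M_{4}/|s|$, which blows up, and the interpolation of Corollary \ref{M4} yields $\|Jv_{s}\|_{L^{4}}^{2}\leq M_{4}|s|^{-1}\|v_{s}\|_{H^{1}_{0}}$, so that $|s|\,\|Jv_{s}\|_{L^{4}}^{2}=O(1)$ rather than $O(|s|)$ --- not small enough to conclude by your route. This loss is not an artifact: the paper's own quantitative statement, Theorem \ref{thm-glm-konv}, has the exponent $3/2$ rather than $2$ precisely because the quadratic remainder can only be traded against one extra factor $\|J(u_{s}-u)\|_{L^{2}}\lesssim|s|\,\|h\|_{\infty}$ through the $L^{4}$ bound. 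The conclusion of the lemma survives because a superlinear rate suffices: from $\|u(\alpha+sh)-u(\alpha)-s\,\mathcal{T}'(\alpha)h\|\leq L_{2}|s|^{3/2}\|h\|_{\infty}^{3/2}$ one still gets $\|v_{s}-v\|=O(|s|^{1/2})\to0$. So your argument closes once the remainder estimate is carried out with the $L^{4}$ bookkeeping of Corollary \ref{M4} and the claimed rate $O(s)$ is weakened accordingly.
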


It was proven in \cite{SEYDEL;SCHUSTER:16} that the IBVP (\ref{gateaux-diff})--(\ref{vRandwert}) has a unique solution and hence the G\^{a}teaux derivative is well defined.\\

\begin{theorem}\label{gateaux-ex}
 The IBVP (\ref{gateaux-diff})--(\ref{vRandwert}) has a unique, weak solution $v=\mathcal{T}'(\alpha)h$ in $L^{2}(0,T;V)$.\\
\end{theorem}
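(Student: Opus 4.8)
The plan is to read (\ref{gateaux-diff})--(\ref{vRandwert}) as a linear second-order hyperbolic initial boundary value problem with a time-dependent, but spatially $L^{\infty}$, coefficient tensor and a source term of low regularity, and to construct its weak solution by the Faedo--Galerkin method in the Gelfand triple $U\subset H=H'\subset U'$ fixed above. Concretely, I would look for $v$ with $v\in L^{\infty}(0,T;U)$, $\dot v\in L^{\infty}(0,T;H)$, $\ddot v\in L^{2}(0,T;U')$, $v(0)=\dot v(0)=0$, such that for a.e.\ $t\in(0,T)$ and all $w\in U$
\[
 \langle\rho\ddot v(t),w\rangle + a\big(t;v(t),w\big) = \langle F_{h}(t),w\rangle ,
\]
where
\[
 a(t;v,w):=\int_{\Omega}\langle\langle Jw\,|\,\nabla_{Y}\nabla_{Y}C_{\alpha}(x,Ju(t,x)):Jv\rangle\rangle\,dx,\qquad F_{h}(t):=\nabla\cdot[\nabla_{Y}C_{h}(\cdot,Ju(t,\cdot))].
\]

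Before setting up the scheme I would collect the properties of $a$ and $F_{h}$ on which all later estimates rest; this is where the actual work lies. Since $u\in\mathcal{A}$, the gradient $Ju$ is bounded in $L^{\infty}((0,T)\times\Omega)$, so by (\ref{bed2}) the matrix field $A(t,x):=\nabla_{Y}\nabla_{Y}C_{\alpha}(x,Ju(t,x))$ is essentially bounded and uniformly coercive; hence $a$ is bounded, $|a(t;v,w)|\le\mu\,\|v\|_{U}\|w\|_{U}$, and coercive, $a(t;v,v)\ge\kappa\,\|v\|_{U}^{2}$, with $\kappa,\mu$ from (\ref{kappamu}), uniformly in $t$. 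Differentiating $A$ in $t$ by the chain rule produces terms $\partial_{Y_{pq}}\partial_{Y_{ij}}\partial_{Y_{kl}}C_{\alpha}(x,Ju(t,x))\,\partial_{q}\dot u_{p}(t,x)$, which are bounded in $L^{\infty}((0,T)\times\Omega)$ by (\ref{beschr1}) and the bound $\|\partial_{l}\dot u_{k}\|_{L^{\infty}((0,T)\times\Omega)}\le M_{1}$ built into $\mathcal{A}$; thus $t\mapsto a(t;v,w)$ is Lipschitz with $|(\partial_{t}a)(t;v,w)|\le c\,\|v\|_{U}\|w\|_{U}$. Finally, since $\nabla_{Y}C_{h}(x,0)=0$ and the second $Y$-derivatives of the $C_{K}$ are bounded by (\ref{bed2}), one has $\nabla_{Y}C_{h}(\cdot,Ju)\in L^{\infty}((0,T)\times\Omega,\RR^{3\times3})$, and therefore $F_{h}\in L^{2}(0,T;U')$.

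Next I would run the Galerkin approximation: with a basis $\{w_{k}\}_{k\ge1}$ of $U$ (for instance Dirichlet--Laplace eigenfunctions) and the ansatz $v_{m}(t)=\sum_{k=1}^{m}g_{m,k}(t)w_{k}$, testing the equation against $w_{1},\dots,w_{m}$ yields a linear system of ODEs for $g_{m}$; its mass matrix is symmetric positive definite because $\rho>0$, and its coefficients and right-hand side are $L^{\infty}$ respectively $L^{2}$ in $t$, so Carath\'eodory's theorem gives a unique absolutely continuous solution on $[0,T]$. Testing with $\dot v_{m}$ and using the symmetry of $a$ gives the energy identity
\[
 \frac{d}{dt}\Big(\tfrac12\|\sqrt{\rho}\,\dot v_{m}(t)\|_{H}^{2}+\tfrac12 a(t;v_{m}(t),v_{m}(t))\Big)=\tfrac12(\partial_{t}a)(t;v_{m}(t),v_{m}(t))+\langle F_{h}(t),\dot v_{m}(t)\rangle ,
\]
and combining coercivity, the bound on $\partial_{t}a$, $F_{h}\in L^{2}(0,T;U')$, Young's inequality and Gronwall's lemma produces bounds on $\|v_{m}\|_{L^{\infty}(0,T;U)}$ and $\|\dot v_{m}\|_{L^{\infty}(0,T;H)}$ independent of $m$, whence also a bound on $\|\ddot v_{m}\|_{L^{2}(0,T;U')}$ by reading $\ddot v_{m}$ off the projected equation.

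Finally I would pass to the limit along a subsequence with $v_{m}\rightharpoonup^{\ast}v$ in $L^{\infty}(0,T;U)$, $\dot v_{m}\rightharpoonup^{\ast}\dot v$ in $L^{\infty}(0,T;H)$ and $\ddot v_{m}\rightharpoonup\ddot v$ in $L^{2}(0,T;U')$; linearity of the equation (the coefficient field and $F_{h}$ are fixed data here) lets me pass to the limit in the Galerkin identities, the standard hyperbolic regularity $v\in C([0,T];U)$, $\dot v\in C([0,T];H)$ makes the initial conditions $v(0)=\dot v(0)=0$ meaningful and satisfied, and $v\in L^{\infty}(0,T;U)\hookrightarrow L^{2}(0,T;V)$ by (\ref{poincare-1}). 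For uniqueness, the difference of two solutions solves the homogeneous problem; testing it with $\dot v$ (after the customary regularisation, since $\dot v$ only lies in $L^{\infty}(0,T;H)$) and applying Gronwall's lemma forces $v\equiv0$. Together with Lemma \ref{frechet-system} — or a difference-quotient argument based on the stability estimate of Theorem \ref{theorem21imp} — this identifies the solution with $\mathcal{T}'(\alpha)h$. As anticipated, the only nonroutine point is the second step above, namely deriving the $L^{\infty}$/$L^{2}$ bounds on $A$, $\partial_{t}A$ and $F_{h}$ from the structural hypotheses (\ref{bed1})--(\ref{beschr6}) and the a priori bounds $M_{0},\dots,M_{4}$ defining $\mathcal{A}$; the remainder is the textbook Faedo--Galerkin argument for linear hyperbolic equations, carried out in detail in \cite{SEYDEL;SCHUSTER:16}.
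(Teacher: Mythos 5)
The paper itself gives no proof of this theorem; it is quoted from the companion paper \cite{SEYDEL;SCHUSTER:16}, and your Faedo--Galerkin construction in the triple $U\subset H\subset U'$ with the bilinear form $a(t;\cdot,\cdot)$ built from $\nabla_{Y}\nabla_{Y}C_{\alpha}(x,Ju)$ is exactly the standard route such a proof takes (the paper's own later computations, e.g.\ the form $a_{1}$ and the energy/Gronwall estimates in the proof of Lemma \ref{pinU}, are consistent with this). Your identification of where the real work lies is also right: coercivity and boundedness of $a$ from (\ref{bed2}), and the $L^{\infty}$ bound on $\partial_{t}a$ from (\ref{beschr1}) together with $\|\partial_{l}\dot u_{k}\|_{L^{\infty}}\le M_{1}$, are precisely what the structural hypotheses and the class $\mathcal{A}$ are designed to deliver.

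One step as written does not close, though it is repairable with the hypotheses at hand. In the energy identity you pair $\langle F_{h}(t),\dot v_{m}(t)\rangle$ with only $F_{h}\in L^{2}(0,T;U')$; Young's inequality then costs you $\|\dot v_{m}(t)\|_{U}$, which is \emph{not} controlled by the energy $\|\dot v_{m}\|_{H}^{2}+a(t;v_{m},v_{m})$, so Gronwall does not apply as stated. Two standard fixes: (a) observe that the source is actually in $L^{2}(0,T;H)$, not merely $U'$ --- expanding $\nabla\cdot[\nabla_{Y}C_{h}(x,Ju)]$ gives $(\partial_{l}\partial_{Y_{kl}}C_{h})(x,Ju)+\sum_{i,j}\partial_{Y_{ij}}\partial_{Y_{kl}}C_{h}(x,Ju)\,\partial_{l}\partial_{j}u_{i}$, bounded in $L^{\infty}(0,T;L^{2})$ by (\ref{beschr3}) and the bound $\|\partial_{l}\partial_{j}u\|_{L^{\infty}((0,T),L^{2})}\le M_{0}$ from $\mathcal{A}$ --- after which $|\langle F_{h},\dot v_{m}\rangle|\le\|F_{h}\|_{H}\|\dot v_{m}\|_{H}$ closes the estimate; or (b) keep the divergence form, integrate $\int_{0}^{\tau}\int_{\Omega}\nabla_{Y}C_{h}(x,Ju):J\dot v_{m}\,dx\,dt$ by parts in time, and use that $\partial_{t}[\nabla_{Y}C_{h}(x,Ju)]=\nabla_{Y}\nabla_{Y}C_{h}(x,Ju):J\dot u$ is bounded via (\ref{bed2}) and $M_{1}$, so the boundary and commutator terms are absorbed into $\|v_{m}\|_{L^{\infty}(0,T;U)}$. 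With either repair the rest of your argument (uniform bounds, weak-$\ast$ limits, attainment of the zero initial data, uniqueness by testing the homogeneous problem, and the embedding $L^{\infty}(0,T;U)\hookrightarrow L^{2}(0,T;V)$ via (\ref{poincare-1})) is routine and correct.
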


The aim of \cite{SEYDEL;SCHUSTER:16} was to show, that $\mathcal{T}: \Drm (\mathcal{T})\subset \RN\to L^{2}(0,T;V)$ even is Fr\'{e}chet differentiable. 
To this end it was proven that the mapping $\mathcal{T}' (\alpha) : \RR^N\to  L^{2}(0,T;V)$, $h\mapsto v$, is linear and bounded.
 It is linear because (\ref{gateaux-diff}) is linear in $v$ and $h$. The continuity was subject of the following theorem.\\
 
\begin{theorem}[{\cite[Prop. 3.4]{SEYDEL;SCHUSTER:16}}]\label{gateaux-stetig}
 Adopt the assumption of Lemma \ref{frechet-system}. The G\^{a}teaux derivative $v=\mathcal{T}'(\alpha)h$ is continuous in $h$ for all $h\in\RR^{N}$, i.e. there is a constant $L_{1}>0$ with $\|v\|_{L^{2}(0,T;V)}\leq L_{1}\|h\|_{\infty}$.\\
 \end{theorem}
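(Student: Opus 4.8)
\textit{Proof (sketch).} This is \cite[Prop.~3.4]{SEYDEL;SCHUSTER:16}; the plan is to derive it from a standard energy estimate for the linear initial boundary value problem (\ref{gateaux-diff})--(\ref{vRandwert}) together with Gronwall's lemma. In what follows $c_1,c_2,\ldots$ denote positive constants that depend only on $T$, on the bounds $M_0,\ldots,M_4$ defining $\mathcal{A}$ and on the structural constants $\kappa_K^{[\cdot]},\mu_K^{[\cdot]}$ of the dictionary (hence on $\kappa$ and $\mu$ from (\ref{kappamu})), but never on $h$.

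First I would test the weak form of (\ref{gateaux-diff}) with $\dot v(t,\cdot)\in U$. The inertial term contributes $\langle\rho\ddot v,\dot v\rangle=\tfrac12\tfrac{d}{dt}\rho\|\dot v(t,\cdot)\|_H^2$, and integrating the principal part by parts in space yields $\int_\Omega\langle\langle\nabla_Y\nabla_Y C_\alpha(x,Ju):Jv\,|\,J\dot v\rangle\rangle\,dx$. Since $J\dot v=\partial_t(Jv)$, this last quantity equals $\tfrac12\tfrac{d}{dt}\int_\Omega\langle\langle Jv\,|\,\nabla_Y\nabla_Y C_\alpha(x,Ju):Jv\rangle\rangle\,dx$ minus the remainder $\tfrac12\int_\Omega\langle\langle Jv\,|\,\partial_t[\nabla_Y\nabla_Y C_\alpha(x,Ju)]:Jv\rangle\rangle\,dx$; because $\partial_t[\nabla_Y\nabla_Y C_\alpha(x,Ju)]=\nabla_Y\nabla_Y\nabla_Y C_\alpha(x,Ju):J\dot u$ is bounded in $L^\infty((0,T)\times\Omega)$ by the third-order bound (\ref{beschr1}) and the estimate $\|\partial_l\dot u_k\|_{L^\infty((0,T)\times\Omega)}\le M_1$ from $u\in\mathcal{A}$, this remainder is $\le c_1\|Jv(t,\cdot)\|_{L^2}^2$. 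On the right-hand side I would \emph{not} integrate by parts but estimate $|(\nabla\cdot[\nabla_Y C_h(x,Ju)],\dot v)_H|\le\|\nabla\cdot[\nabla_Y C_h(x,Ju)](t,\cdot)\|_H\|\dot v(t,\cdot)\|_H$; expanding the divergence by the chain rule produces the terms $(\partial_l\partial_{Y_{kl}}C_h)(x,Ju)$ and $(\partial_{Y_{ij}}\partial_{Y_{kl}}C_h)(x,Ju)\,\partial_l\partial_j u_i$, which by (\ref{beschr3}), the Hessian bound (\ref{bed2}) and the estimate $\|\partial_l\partial_j u_k\|_{L^\infty((0,T)\times\Omega)}\le M_3$ are controlled; using $C_h=\sum_{K=1}^N h_K C_K$ and $|h_K|\le\|h\|_\infty$ one obtains $\|\nabla\cdot[\nabla_Y C_h(x,Ju)](t,\cdot)\|_H\le c_2\|h\|_\infty$, uniformly in $t\in[0,T]$.

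Next, I would set $E(\tau):=\rho\|\dot v(\tau,\cdot)\|_H^2+\int_\Omega\langle\langle Jv(\tau,\cdot)\,|\,\nabla_Y\nabla_Y C_\alpha(x,Ju(\tau,\cdot)):Jv(\tau,\cdot)\rangle\rangle\,dx$, which satisfies $E(0)=0$ by (\ref{vAnfangswert}) and, by the uniform lower ellipticity bound from (\ref{bed2}) and (\ref{kappamu}), $E(\tau)\ge\rho\|\dot v(\tau,\cdot)\|_H^2+\kappa\|Jv(\tau,\cdot)\|_{L^2}^2$. Collecting the terms above gives $\tfrac12\tfrac{d}{dt}E(t)\le c_1\|Jv(t,\cdot)\|_{L^2}^2+c_2\|h\|_\infty\|\dot v(t,\cdot)\|_H$; integrating over $[0,\tau]$ and applying Young's inequality to the last term (using $\|\dot v\|_H^2\le\rho^{-1}E$ and $\|Jv\|_{L^2}^2\le\kappa^{-1}E$) yields $E(\tau)\le c_3\int_0^\tau E(t)\,dt+c_4\|h\|_\infty^2$ for all $\tau\in[0,T]$, whence Gronwall's lemma (with $k\equiv0$) gives $E(\tau)\le c_4 e^{c_3T}\|h\|_\infty^2$. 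In particular $\|v(\tau,\cdot)\|_U^2=\|Jv(\tau,\cdot)\|_{L^2}^2\le\kappa^{-1}c_4 e^{c_3T}\|h\|_\infty^2$, and the Poincar\'e inequality (\ref{poincare-1}) then gives $\|v\|_{L^2(0,T;V)}^2=\int_0^T\|v(t,\cdot)\|_V^2\,dt\le(1+C_\Omega)\int_0^T\|v(t,\cdot)\|_U^2\,dt\le(1+C_\Omega)\,T\,\kappa^{-1}c_4\,e^{c_3T}\|h\|_\infty^2$, i.e.\ the claim with $L_1:=\bigl((1+C_\Omega)Tc_4/\kappa\bigr)^{1/2}e^{c_3T/2}$.

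The main obstacle is not the formal computation but its rigorous justification: testing with $\dot v$ and using $\tfrac{d}{dt}\|Jv(t,\cdot)\|_{L^2}^2=2\int_\Omega\langle\langle Jv\,|\,J\dot v\rangle\rangle\,dx$ requires more regularity of the weak solution of Theorem \ref{gateaux-ex} than its mere membership in $L^2(0,T;V)$ (one needs, e.g., $\dot v\in L^2(0,T;U)$ and $\ddot v\in L^2(0,T;U')$ with the associated pairing identity). As usual this is handled by a Galerkin scheme: one carries out the above estimate on the finite-dimensional spaces, where every step is legitimate, and passes to the limit using weak lower semicontinuity of the norms. A secondary, bookkeeping-type difficulty is to verify that all constants $c_1,\ldots,c_4$ depend only on the admissible data and the structural constants and not on $h$, which is precisely where the linearity of $Y\mapsto C_h(x,Y)$ in $h$ and the bound $|h_K|\le\|h\|_\infty$ are used.
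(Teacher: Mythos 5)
Your energy-estimate argument (test with $\dot v$, differentiate the quadratic form $\int_\Omega\langle\langle Jv\,|\,\nabla_Y\nabla_Y C_\alpha(x,Ju):Jv\rangle\rangle\,dx$, control the commutator via (\ref{beschr1}) and $M_1$, bound the source $\nabla\cdot[\nabla_Y C_h(x,Ju)]$ linearly in $\|h\|_\infty$ via (\ref{beschr3}), (\ref{bed2}) and $M_3$, then apply coercivity, Gronwall and Poincar\'e) is correct and is exactly the machinery this paper and its companion \cite{SEYDEL;SCHUSTER:16} use for this and the analogous estimates (compare the bilinear form $a_1$ and the Gronwall step in the proof of Lemma \ref{pinU}). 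Your closing remark about justifying the formal test function $\dot v$ by a Galerkin approximation is also the right caveat; no gaps.
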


Next we state the Fr\'{e}chet differentiability of $\mathcal{T}$.\\

\begin{theorem}[{\cite[Th. 3.8]{SEYDEL;SCHUSTER:16}}]\label{thm-glm-konv}
Adopt the assumptions of Lemma \ref{frechet-system} and let $v=\mathcal{T}' (\alpha)h$. There is a constant $L_{2}>0$ depending only on $\Omega$, $T$ and $\alpha$ such that 
\begin{equation}\label{glm-konv-ungl}
  \|u(\alpha+h)-u(\alpha)-v\|_{L^{2}(0,T;V)}\leq L_{2}\|h\|_{\infty}^{\frac{3}{2}}\qquad \mbox{for }\|h\|_{\infty}\rightarrow 0.\vspace{2mm}
\end{equation}
\end{theorem}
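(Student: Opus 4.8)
The plan is to estimate the error $w := u(\alpha+h)-u(\alpha)-v$, where $v=\mathcal{T}'(\alpha)h$ and $\delta:=u(\alpha+h)-u(\alpha)$, by identifying the linear initial boundary value problem that $w$ solves, bounding its right‑hand side by an appropriate power of $\|h\|_\infty$, and closing the estimate with the energy method behind Theorems \ref{gateaux-ex} and \ref{gateaux-stetig} together with Gronwall's lemma. Since $\alpha$ is an interior point of $\Drm(\mathcal{T})$ and the structural conditions (\ref{dim}), (\ref{kappamuungl}) are open in $\alpha$, for all sufficiently small $h$ we have $\alpha+h\in\Drm(\mathcal{T})$, so $u(\alpha+h)\in\mathcal{A}$ is well defined and Theorem \ref{theorem21imp} applies to the pair $u(\alpha)$, $u(\alpha+h)$ with common data $(f,u_0,u_1)$; in particular $\|\delta(t,\cdot)\|_{H^2(\Omega,\RR^3)}$, $\|J\delta(t,\cdot)\|_{L^2(\Omega,\RR^{3\times3})}$ and $\|J\dot\delta(t,\cdot)\|_{L^2(\Omega,\RR^{3\times3})}$ are all bounded by $C\|h\|_\infty$ uniformly in $t\in(0,T)$. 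Here and below $C$ is a constant that may change from line to line and stays bounded as $\|h\|_\infty\to0$.

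Subtracting the three equations and writing $\nabla_YC_K(x,Ju(\alpha+h))=\nabla_YC_K(x,Ju(\alpha))+\nabla_Y\nabla_YC_K(x,Ju(\alpha)):J\delta+R_K$ with the second order Taylor remainder $R_K$, one obtains, with the notation $C_\alpha,C_h$ of Lemma \ref{frechet-system}, that $w$ solves
\begin{equation*}
 \rho\ddot w-\nabla\cdot\big[\nabla_Y\nabla_YC_\alpha(x,Ju(\alpha)):Jw\big]=\nabla\cdot G
\end{equation*}
with the homogeneous data (\ref{vAnfangswert}), (\ref{vRandwert}), where
\begin{equation*}
 G=\sum_{K=1}^N\alpha_K R_K+\sum_{K=1}^N h_K\big[\nabla_YC_K(x,Ju(\alpha+h))-\nabla_YC_K(x,Ju(\alpha))\big],
\end{equation*}
and by the bound (\ref{beschr1}) on the third $Y$‑derivatives $|R_K(t,x)|\le C\mu_K^{[2]}|J\delta(t,x)|^2$ pointwise. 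Thus $w$ solves precisely the linearized problem of Lemma \ref{frechet-system}, only with $\nabla\cdot G$ replacing $\nabla\cdot[\nabla_YC_h(x,Ju(\alpha))]$; the existence result of Theorem \ref{gateaux-ex} gives $w\in L^2(0,T;V)$, and the energy estimate proceeds as in the proof of Theorem \ref{gateaux-stetig}.

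I would then estimate $G$ and $\dot G$ in $L^2(\Omega)$. For the second sum in $G$, the Hessian bound (\ref{bed2}) yields $\|\nabla_YC_K(\cdot,Ju(\alpha+h))-\nabla_YC_K(\cdot,Ju(\alpha))\|_{L^2}\le\mu_K^{[1]}\|J\delta(t,\cdot)\|_{L^2}$, hence $O(\|h\|_\infty^2)$; for the remainder, $\|R_K(t,\cdot)\|_{L^2}\le C\mu_K^{[2]}\|J\delta(t,\cdot)\|_{L^4}^2\le C\mu_K^{[2]}\|J\delta(t,\cdot)\|_{H^1}^2$ by the Sobolev embedding $H^1(\Omega)\hookrightarrow L^4(\Omega)$ in three dimensions, hence $O(\|h\|_\infty^2)$ as well (alternatively Corollary \ref{M4} gives $\|R_K(t,\cdot)\|_{L^2}\le C\mu_K^{[2]}M_4\|\delta(t,\cdot)\|_U$). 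Thus $\|G(t,\cdot)\|_{L^2}\le C\|h\|_\infty^2\le C\|h\|_\infty^{3/2}$ uniformly in $t$. The term $\dot G$ is the delicate one: differentiating $R_K$ in $t$ produces, besides genuinely quadratic pieces (handled as above, using in addition (\ref{beschr2})), a cross term of the form $\mu_K^{[2]}\big[\nabla_Y\nabla_YC_K(\cdot,Ju(\alpha)+sJ\delta)-\nabla_Y\nabla_YC_K(\cdot,Ju(\alpha))\big]:J\dot\delta$, which I would bound in $L^2(\Omega)$ by $C\mu_K^{[2]}\|J\delta(t,\cdot)\|_{L^\infty}\|J\dot\delta(t,\cdot)\|_{L^2}$. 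Here $\|J\dot\delta(t,\cdot)\|_{L^2}\le C\|h\|_\infty$ by Theorem \ref{theorem21imp}; for $\|J\delta(t,\cdot)\|_{L^\infty}$ I interpolate, using that $\mathcal{A}$ bounds $\|\partial_l\partial_j\delta_k\|_{L^\infty((0,T)\times\Omega)}\le2M_3$ and that Theorem \ref{theorem21imp} gives $\|J\delta(t,\cdot)\|_{L^6}\le C\|J\delta(t,\cdot)\|_{H^1}\le C\|h\|_\infty$, so a Gagliardo--Nirenberg inequality yields $\|J\delta(t,\cdot)\|_{L^\infty}\le C\|h\|_\infty^{1/2}$. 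Hence the cross term is $O(\|h\|_\infty^{3/2})$, the $h$‑linear part of $\dot G$ is $O(\|h\|_\infty^2)$, and $\|\dot G(t,\cdot)\|_{L^2}\le C\|h\|_\infty^{3/2}$ uniformly in $t$.

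Finally I would test the $w$‑equation with $\dot w$, integrate over $\Omega\times(0,\tau)$, and use the coercivity $\kappa\|Jw\|_{L^2}^2\le\langle\langle Jw\,|\,\nabla_Y\nabla_YC_\alpha:Jw\rangle\rangle$ from (\ref{bed2}) and (\ref{kappamu}); the homogeneous data and an integration by parts of the source $\int_0^\tau\langle\nabla\cdot G,\dot w\rangle$ once in time and once in space (so that only $G$, $\dot G$ and $Jw$, never $J\dot w$, occur) leave the current‑time term $\int_\Omega\langle\langle G(\tau,\cdot)\,|\,Jw(\tau,\cdot)\rangle\rangle$, treated with Young's inequality, and a term from the time derivative of the coefficient $\nabla_Y\nabla_YC_\alpha(\cdot,Ju(\alpha))$, which by (\ref{beschr1}) and the $\mathcal{A}$‑bound on $J\dot u(\alpha)$ is at most $C\int_0^\tau\psi(t)\,dt$, where $\psi(\tau):=\rho\|\dot w(\tau,\cdot)\|_{L^2}^2+\kappa\|Jw(\tau,\cdot)\|_{L^2}^2$. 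One arrives at an inequality $\psi(\tau)\le a+\int_0^\tau b\,\psi+\int_0^\tau k\,\psi^{1/2}$ with $b$ bounded, $a\le C\|h\|_\infty^3$ and $\|k\|_{L^1(0,T)}\le C\|h\|_\infty^{3/2}$, so Gronwall's lemma with $p=\tfrac12$ gives $\psi(\tau)^{1/2}\le C\|h\|_\infty^{3/2}$ for all $\tau\in(0,T)$; integrating in $t$ and using the Poincar\'{e} inequality (\ref{poincare-1}) then yields (\ref{glm-konv-ungl}). I expect the main obstacle to be exactly this last step: $\nabla\cdot G$ lies only in $L^\infty(0,T;U')$, one order of regularity below $L^\infty(0,T;H)$, so the energy argument must shift a derivative onto $Jw$ by integrating by parts in time, which forces $\dot G$ to appear; and the cross term $|J\delta|\,|J\dot\delta|$ inside $\dot G$ — in which $J\dot\delta$ is only $L^2$‑small (of order $\|h\|_\infty$) while $J\delta$ is only $L^\infty$‑small (of order $\|h\|_\infty^{1/2}$ by the interpolation above) — is what limits the attainable rate to $\tfrac32$. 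Keeping the constants uniformly bounded as $\|h\|_\infty\to0$, via $\bar C(\alpha)$ and $\hat C(\alpha)$, is the remaining bookkeeping.
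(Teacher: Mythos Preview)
Your strategy matches the one underlying the cited proof: derive the linear IBVP for $d=u(\alpha+h)-u(\alpha)-v$, test with $\dot d$, shift the divergence-form source onto $Jd$ by one integration by parts in space and one in time, and close with the nonlinear Gronwall lemma at exponent $p=\tfrac12$. This is exactly the structure visible in the paper's use of the proof (cf.\ the constants $a=2(B_1+B_3)\|h\|_\infty^3$, $b=\tfrac{729\mu(\alpha)}{8\kappa(\alpha)}\eta M_1$ and $k=2(B_2(h)+B_4\|h\|_\infty^{1/2})\|h\|_\infty^{3/2}/\sqrt{\kappa(\alpha)}$ displayed in the proof of Lemma~\ref{dNorm}), so you have correctly identified both the equation for the defect and the energy mechanism that produces the $\tfrac32$ rate, including the point that the loss from $2$ to $\tfrac32$ originates in the time-differentiated remainder.

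One small correction that does not affect the outcome: the Gagliardo--Nirenberg interpolation you invoke between $\|J\delta(t,\cdot)\|_{L^6}$ (or $\|J\delta(t,\cdot)\|_{H^1}$) and $\|\nabla J\delta(t,\cdot)\|_{L^\infty}$ in three space dimensions gives the exponent $\tfrac23$, not $\tfrac12$, i.e.\ $\|J\delta(t,\cdot)\|_{L^\infty}\le C\|h\|_\infty^{2/3}$. Since you only need $\|J\delta\|_{L^\infty}\,\|J\dot\delta\|_{L^2}=O(\|h\|_\infty^{3/2})$, the sharper exponent $\tfrac23$ still suffices (indeed it yields $O(\|h\|_\infty^{5/3})$ for that piece); the overall $\tfrac32$ rate is then dictated by the $a$-term, consistent with the displayed $a=O(\|h\|_\infty^3)$ and $a^{1/2}=O(\|h\|_\infty^{3/2})$.
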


We continue by recapitulating the representation of the adjoint operator 
of the Fr\'{e}chet derivative $\mathcal{T}'(\alpha)^*$, which was also proven in \cite{SEYDEL;SCHUSTER:16}. We will see that the adjoint is important when applying iterative solvers as the Landweber method to the inverse
problem $\mathcal{T}(\alpha) = u^{meas}$.
Let be $\mathcal{X}$ the space consisting of all solutions of 
\begin{equation}\label{Bveq}
 Bv = f
\end{equation}
for $f\in L^2 (0,T;H)$, if we define the mapping $B: L^{2}(0,T;U)\cap H^{1}(0,T;H) \to H^{-1} (0,T;H)$ by
\begin{equation}\label{Bv}
 Bv := \rho\ddot{v} - \nabla\cdot [\nabla_{Y}\nabla_{Y}C_{\alpha}(x,Ju):Jv]
\end{equation}
for all $v\in L^{2}(0,T;U)\cap H^{1}(0,T;H)$. Then we have $\mathcal{X} = B^{-1} (L^2 (0,T;H))\subset (L^{2}(0,T;U)\cap H^{1}(0,T;H))$ and the mapping $B: \mathcal{X}\to L^2 (0,T;H)$ is bijective
since (\ref{Bveq}) with (\ref{vAnfangswert}) and (\ref{vRandwert}) is uniquely solvable according to Theorem \ref{gateaux-ex}. This yields $B^{-1} : L^2 (0,T;H)\to \mathcal{X}$, $f\mapsto v$, where $v$ solves (\ref{Bveq}) with (\ref{vAnfangswert}) and (\ref{vRandwert}).
Finally $\mathcal{X}$ endowed with the norm $\|v\|_{\mathcal{X}}=\|B v\|_{L^{2}(0,T;H)}$ turns into a Hilbert space, which is a closed subspace of $L^{2}(0,T;U)\cap H^{1}(0,T;H)$. The following lemma states that the embedding $\mathcal{X} \hookrightarrow L^{2}(0,T;U)\cap H^{1}(0,T;H)$ even
is continuous.\\

\begin{lemma}\label{stetigeinb}
The embedding $\mathcal{X} \hookrightarrow L^{2}(0,T;U)\cap H^{1}(0,T;H)$ is continuous, i.e. there is a constant $C>0$ not depending on $v$ such that
\[ \|v\|_{L^{2}(0,T;U)\cap H^{1}(0,T;H)}\leq C\|v\|_{\mathcal{X}}\,,\qquad v\in \mathcal{X}\,.    \]
\end{lemma}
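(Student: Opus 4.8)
Since $B:\mathcal{X}\to L^{2}(0,T;H)$ is bijective by Theorem~\ref{gateaux-ex}, it suffices to establish the a priori estimate
\[\|v\|_{L^{2}(0,T;U)\cap H^{1}(0,T;H)}\leq C\,\|f\|_{L^{2}(0,T;H)}\]
for every weak solution $v$ of the linear problem $Bv=f$, (\ref{vAnfangswert}), (\ref{vRandwert}); substituting $f=Bv$ and recalling $\|v\|_{\mathcal{X}}=\|Bv\|_{L^{2}(0,T;H)}$ then yields the assertion. The plan is to derive this bound by the standard energy method for second order hyperbolic equations, as it underlies the solvability statement of Theorem~\ref{gateaux-ex}.

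First I would test the equation $\rho\ddot v-\nabla\cdot[\nabla_{Y}\nabla_{Y}C_{\alpha}(x,Ju):Jv]=f$ with $\dot v$ and integrate over $\Omega\times(0,\tau)$ for $\tau\in(0,T)$. Introducing the symmetric, time dependent bilinear form $a(t;w,z):=\int_{\Omega}\langle\langle Jz\,|\,\nabla_{Y}\nabla_{Y}C_{\alpha}(x,Ju(t,x)):Jw\rangle\rangle\,dx$ on $U\times U$, the elliptic term contributes $\tfrac12\tfrac{d}{dt}a(t;v,v)-\tfrac12\,\partial_{t}a(t;v,v)$, where $\partial_{t}a(t;v,v)=\int_{\Omega}\langle\langle Jv\,|\,\partial_{t}[\nabla_{Y}\nabla_{Y}C_{\alpha}(x,Ju(t,x))]:Jv\rangle\rangle\,dx$ contains only the factor $\partial_{q}\dot u_{p}$ and no derivative of $v$. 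Using the homogeneous initial data (\ref{vAnfangswert}), the coercivity $a(t;v,v)\geq\kappa\,\|v(t)\|_{U}^{2}$ coming from (\ref{bed2}) with $\kappa=\sum_{K}\alpha_{K}\kappa_{K}^{[1]}$ (cf.\ (\ref{kappamu})), the pointwise bound $|\partial_{t}[\nabla_{Y}\nabla_{Y}C_{\alpha}(x,Ju)]|\leq C$ that follows from the bounds $\mu_{K}^{[2]}$ in (\ref{beschr1}) together with $\|\partial_{l}\dot u_{k}\|_{L^{\infty}}\leq M_{1}$ built into the definition (\ref{bedA}) of $\mathcal{A}$, and Young's inequality on $\int_{0}^{\tau}(f,\dot v)_{H}\,dt$, one arrives at an inequality of the form
\[c_{1}\,\|\dot v(\tau)\|_{H}^{2}+c_{2}\,\|v(\tau)\|_{U}^{2}\ \leq\ \int_{0}^{\tau}\|f(t)\|_{H}^{2}\,dt+C\int_{0}^{\tau}\big(\|\dot v(t)\|_{H}^{2}+\|v(t)\|_{U}^{2}\big)\,dt\]
with positive constants $c_{1},c_{2}$ (here $\rho\geq\rho_{0}>0$ is used).

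Applying Gronwall's lemma (its classical form, i.e.\ the case $p=0$ of the version stated above) then gives $\|\dot v(\tau)\|_{H}^{2}+\|v(\tau)\|_{U}^{2}\leq C\,\|f\|_{L^{2}(0,T;H)}^{2}$ uniformly in $\tau\in(0,T)$. Integrating this in $\tau$ over $(0,T)$ bounds $\|v\|_{L^{2}(0,T;U)}^{2}$ and $\|\dot v\|_{L^{2}(0,T;H)}^{2}$; controlling $\|v\|_{L^{2}(0,T;H)}$ by $\|v\|_{L^{2}(0,T;U)}$ via the Poincar\'e inequality (\ref{poincare-1}) (and (\ref{L2H1})) then bounds the remaining part of the $H^{1}(0,T;H)$-norm. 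Summing up yields $\|v\|_{L^{2}(0,T;U)\cap H^{1}(0,T;H)}\leq C\,\|f\|_{L^{2}(0,T;H)}$, which is the claimed estimate.

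The one point that needs care is the integration-by-parts identity $\int_{0}^{\tau}\langle\rho\ddot v,\dot v\rangle\,dt=\tfrac12\int_{\Omega}\rho\,|\dot v(\tau)|^{2}\,dx$: a priori $\rho\ddot v=f+\nabla\cdot[\nabla_{Y}\nabla_{Y}C_{\alpha}(x,Ju):Jv]$ only lies in $L^{2}(0,T;U')\subset H^{-1}(0,T;H)$, while $\dot v\in L^{2}(0,T;H)$ need not belong to $L^{2}(0,T;U)$, so the pairing $\langle\rho\ddot v,\dot v\rangle$ and the resulting formula are not immediate. I would resolve this in the usual way, namely perform the estimate on the Galerkin approximations already employed in \cite{SEYDEL;SCHUSTER:16} — where every quantity is smooth in $t$ and the identity is elementary — and pass to the limit using weak(-$\ast$) convergence together with the weak lower semicontinuity of the norms on the left-hand side; alternatively one invokes the classical Lions lemma on the continuity of $t\mapsto\dot v(t)$ with values in $H$. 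Since the resulting a priori inequality is exactly the energy estimate behind Theorem~\ref{gateaux-ex}, it may equivalently be quoted directly from \cite{SEYDEL;SCHUSTER:16}.
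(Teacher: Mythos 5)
Your proof is correct and follows the standard energy-method route: test $Bv=f$ with $\dot v$, use the coercivity from (\ref{bed2}), bound the time derivative of the bilinear form via $\mu_K^{[2]}$ and $M_1$, and close with Gronwall — which is exactly the technique this paper deploys for the analogous backward problem in Lemma \ref{pinU} and which underlies the estimate deferred to \cite{SEYDEL;SCHUSTER:16} (the paper itself gives no proof of this lemma, citing that reference instead). Your flagging of the $\langle\rho\ddot v,\dot v\rangle$ pairing and its resolution via Galerkin approximation is appropriate; the only cosmetic quibble is that the Gronwall version stated in the paper requires $p\in(0,1)$, so you should invoke the classical ($p$-free) form rather than "the case $p=0$" of that statement.
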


A representation of the adjoint operator $\mathcal{T}'(\alpha)^* : \mathcal{X}' \to \RR^N$ for fixed $\alpha\in \RN$ is presented in the following theorem.\\

\begin{theorem}\label{Tadjungiert}
Let $\alpha\in (\mathrm{int} \Drm (\mathcal{T}))\subset \RN$ be fixed and $w\in \mathcal{X}'$.
The adjoint operator of the Fr\'{e}chet derivative $\mathcal{T}' (\alpha) : \RR^N \to \mathcal{X}$ is given by 
\begin{equation}
 \mathcal{T}'(\alpha)^{*}w = \Big[-\int\limits_{0}^{T}\int\limits_{\Omega}\nabla_{Y}C_{K}(x,Ju(t,x)):J(B^{-1})^{*}w(t,x) \,dx\,dt\Big]_{K=1,...,N}\in\mathbb{R}^{N},
\end{equation}
where $p:=(B^{-1})^{*}w$ is the weak solution of the hyperbolic, backward IBVP
\begin{eqnarray}
&&\hspace*{-3mm}\label{backward-1} \rho\ddot{p}(t,x)-\nabla\cdot [\nabla_{Y}\nabla_{Y}C_{\alpha}(x,Ju(t,x)):Jp(t,x)]=w(t,x), \\[1ex]
&&\hspace*{-3mm}\label{backward-2} p(T,x)=\dot{p}(T,x)=0,\qquad x\in\Omega, \\[1ex]
&&\hspace*{-3mm}\label{backward-3} p(t,\xi)=0,\qquad (t,\xi)\in[0,T]\times\partial\Omega.
\end{eqnarray}
\end{theorem}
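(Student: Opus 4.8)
The plan is to factor $\mathcal{T}'(\alpha)$ into two bounded linear maps and dualise each factor separately. By Lemma~\ref{frechet-system} together with Theorem~\ref{gateaux-ex}, the derivative $v=\mathcal{T}'(\alpha)h$ is the unique weak solution of the linear IBVP (\ref{gateaux-diff})--(\ref{vRandwert}), whose right-hand side depends on $h$ only through $C_h=\sum_{K=1}^N h_K C_K$, hence linearly. Therefore
\[
\mathcal{T}'(\alpha)=B^{-1}\circ G,\qquad Gh:=\sum_{K=1}^N h_K\,\nabla\cdot[\nabla_Y C_K(x,Ju)],
\]
with $G:\RR^N\to L^2(0,T;H)$, $B$ as in (\ref{Bv}), and $Gh=\nabla\cdot[\nabla_Y C_h(x,Ju)]$. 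The first point to settle is that $G$ really maps into $L^2(0,T;H)$ and is bounded: expanding the divergence by the chain rule produces only the terms $\partial_l\partial_{Y_{kl}}C_K(x,Ju)$ and $\partial_{Y_{ij}}\partial_{Y_{kl}}C_K(x,Ju)\,\partial_l\partial_j u_i$, which lie in $L^\infty((0,T)\times\Omega)\subset L^2(0,T;H)$ by the bounds (\ref{bed2}), (\ref{beschr3}) and by $u\in\mathcal{A}$, in particular $\|\partial_l\partial_j u_k\|_{L^\infty((0,T)\times\Omega)}\le M_3$. Since $B:\mathcal{X}\to L^2(0,T;H)$ is a Hilbert space isomorphism, $B^{-1}$ is bounded, so $\mathcal{T}'(\alpha)=B^{-1}G$ is bounded with $\mathcal{T}'(\alpha)^*=G^*\circ(B^{-1})^*$; it remains to compute the two adjoints.

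For $(B^{-1})^*:\mathcal{X}'\to L^2(0,T;H)$, fix $w\in\mathcal{X}'$ and let $p$ be the weak solution of the backward IBVP (\ref{backward-1})--(\ref{backward-3}); existence and uniqueness follow from Theorem~\ref{gateaux-ex} applied after the time reversal $t\mapsto T-t$, which turns (\ref{backward-1})--(\ref{backward-3}) into a forward problem of exactly the structure (\ref{gateaux-diff})--(\ref{vRandwert}). I claim $(B^{-1})^*w=p$. Indeed, take an arbitrary $g\in L^2(0,T;H)$ and put $v:=B^{-1}g$, i.e. $v$ solves (\ref{gateaux-diff})--(\ref{vRandwert}) with right-hand side $g$. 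Testing the weak form of the backward equation with $v$ and the weak form of $Bv=g$ with $p$ and comparing the two identities, the interior terms cancel: by symmetry of the Hessian $\nabla_Y\nabla_Y C_\alpha(x,Y)$ as a bilinear form on $\RR^{3\times3}$ (commutativity of $\partial_{Y_{ij}}\partial_{Y_{kl}}C_\alpha$),
\[
\int_0^T\!\int_\Omega \langle\langle \nabla_Y\nabla_Y C_\alpha(x,Ju):Jv \,|\, Jp\rangle\rangle\,dx\,dt=\int_0^T\!\int_\Omega \langle\langle Jv \,|\, \nabla_Y\nabla_Y C_\alpha(x,Ju):Jp\rangle\rangle\,dx\,dt,
\]
while the second-order term, after integrating by parts twice in time, becomes $\int_0^T\langle v,\rho\ddot p\rangle_H\,dt$ with vanishing boundary contributions, precisely because the complementary conditions $v(0)=\dot v(0)=0$ and $p(T)=\dot p(T)=0$ annihilate the terms at $t=0$ and $t=T$ (the spatial boundary terms vanish since $v(t,\cdot),p(t,\cdot)\in U=H_0^1(\Omega,\RR^3)$). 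What is left is $\langle w,v\rangle_{\mathcal{X}',\mathcal{X}}=\langle g,p\rangle_{L^2(0,T;H)}$, i.e. $\langle w,B^{-1}g\rangle_{\mathcal{X}',\mathcal{X}}=\langle p,g\rangle_{L^2(0,T;H)}$ for every $g$, which is exactly $(B^{-1})^*w=p$.

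For $G^*:L^2(0,T;H)\to\RR^N$, the linearity $Gh=\sum_{K=1}^N h_K\,\nabla\cdot[\nabla_Y C_K(x,Ju)]$ immediately gives
\[
G^*\phi=\Big(\int_0^T\!\int_\Omega \langle \nabla\cdot[\nabla_Y C_K(x,Ju)],\phi\rangle_{\RR^3}\,dx\,dt\Big)_{K=1,\dots,N},\qquad \phi\in L^2(0,T;H).
\]
Evaluating at $\phi=p=(B^{-1})^*w$ and integrating by parts in space, the divergence is shifted onto $p$ with no boundary term since $p(t,\cdot)\in U=H_0^1(\Omega,\RR^3)$, so the $K$-th component equals $-\int_0^T\!\int_\Omega \nabla_Y C_K(x,Ju):Jp\,dx\,dt$. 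Composing, $\mathcal{T}'(\alpha)^*w=G^*\big((B^{-1})^*w\big)$ is precisely the asserted formula.

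The main obstacle is making the integrations by parts rigorous at the level of weak solutions: $v$ and $p$ only belong to $L^2(0,T;U)\cap H^1(0,T;H)$, so $\ddot v$ and $\ddot p$ are mere distributions and one cannot integrate by parts in them naively. The clean route is to work throughout with the variational identities defining $v$ and $p$, to use $v$ itself as a test function -- admissible because $\mathcal{X}\hookrightarrow L^2(0,T;U)\cap H^1(0,T;H)$ by Lemma~\ref{stetigeinb} -- and to carry out the time integration by parts only on the well-defined term $\int_0^T\langle\rho\dot v,\dot p\rangle_H\,dt$. A second, related subtlety is the meaning of ``$w$ on the right-hand side of (\ref{backward-1})'': since $w\in\mathcal{X}'$ need not lie in $L^2(0,T;H)$, the backward IBVP must be read in the transposed weak sense, and the cleanest way to reach the general case is to first establish $(B^{-1})^*w=p$ for $w$ in the dense subspace $L^2(0,T;H)\subset\mathcal{X}'$ (dense because $C_c^\infty((0,T)\times\Omega)\subset\mathcal{X}$ by the standing regularity assumptions, so $\mathcal{X}$ is dense in $L^2(0,T;H)$ and hence $L^2(0,T;H)$ embeds densely into $\mathcal{X}'$), where Theorem~\ref{gateaux-ex} applies directly, and then to extend by continuity of $(B^{-1})^*$. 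These are exactly the places where the regularity bookkeeping and embedding results of Section~\ref{sec:setup} enter.
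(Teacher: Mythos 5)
Your proposal is correct and follows exactly the route that the paper's framework is set up for: the paper itself gives no proof of Theorem \ref{Tadjungiert} (it is quoted from \cite{SEYDEL;SCHUSTER:16}), but the factorization $\mathcal{T}'(\alpha)=B^{-1}G$ with $Gh=\nabla\cdot[\nabla_Y C_h(\cdot,Ju)]$, the identification of $(B^{-1})^*w$ with the solution $p$ of the backward IBVP via the transposition identity $\langle w,B^{-1}g\rangle_{\mathcal{X}',\mathcal{X}}=\langle p,g\rangle_{L^2(0,T;H)}$, and the spatial integration by parts yielding $-\int_0^T\int_\Omega\nabla_Y C_K:Jp$ are precisely what the notation $B$, $\mathcal{X}$, $(B^{-1})^*$ introduced in Section \ref{sec:setup} is designed for. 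You also correctly flag the two genuine technical points (low time regularity of $v,p$, and reading $w\in\mathcal{X}'$ in the transposed weak sense with a density argument) and handle them at the same level of rigor as the paper.
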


We have now all ingredients together for the proof of the convergence result in Section \ref{sec:convergence} and the numerical solution of (IP I). The last point to consider in this section is the observation operator, which is needed for (IP II). 
Let be $\mathcal{Q}:\;L^{2}(0,T;U)\rightarrow L^{2}(0,T;\mathbb{R}^{l})$ the observation operator with
  \begin{eqnarray}\label{beobachtung}
   \mathcal{Q}[u](t) = \bigg(\int\limits_{\partial\Omega}\langle g_{k},u(t)\rangle_{\mathbb{R}^{3}}\,d\xi\bigg)_{k=1,...,l} = \bigg(\langle g_{k},\Gamma(u(t))\rangle_{L^{2}(\partial\Omega,\mathbb{R}^{3})}\bigg)_{k=1,...,l}\in\mathbb{R}^{l}, 
  \end{eqnarray}
  where $\Gamma:\;V\rightarrow H^{\frac{1}{2}}(\partial\Omega,\RR^3)$ is the trace operator and $g_{k}\in L^{2}(\partial\Omega,\mathbb{R}^{3})$ given weight functions (see Section \ref{sec:introduction}). 
  Then $L^{2}(0,T;\mathbb{R}^{l})$ represents the data space. Because of (\ref{beobachtung}) it is easy to see that $\mathcal{Q}$ is linear in $u\in L^{2}(0,T;U)$. 
  Furthermore it is proven in \cite{BINDER;SCHUSTER:15} that $\mathcal{Q}$ is continuous in $u\in L^{2}(0,T;U)$ and that the adjoint operator $\mathcal{Q}^{*}$ has for all $a\in L^{2}(0,T;\mathbb{R}^{l})$ the representation
  \begin{eqnarray}\label{adjbeobachtung}
   \mathcal{Q}^{*}a = \sum\limits_{k=1}^{l}a_{k}\Gamma^{*}g_{k}.
  \end{eqnarray}

\section{Numerical scheme}\label{sec:scheme}
In this section we outline a numerical solution scheme for computing a solution of (\ref{allgGlg}) with input data $\tilde{u}$. We want to use the attenuated Landweber method (see for example \cite{DeuflhardEnglScherzer98}, \cite{scherzer1998iterative} or \cite{kaltenbacher2008iterative}). 
This iteration is defined for solving (IP I) via
\begin{eqnarray}\label{Landweberalpha}
   \alpha^{(k+1)} = \alpha^{(k)} - \omega \mathcal{T}'(\alpha^{(k)})^{*}(\mathcal{T}(\alpha^{(k)}) - y),\;\; k=0,1,2,...
  \end{eqnarray}
  and for solving (IP II) via
\begin{eqnarray}\label{LandweberalphaQ}
   \alpha^{(k+1)} = \alpha^{(k)} - \omega \mathcal{T}'(\alpha^{(k)})^{*}Q^{*}(Q\mathcal{T}(\alpha^{(k)}) - y),\;\; k=0,1,2,...\,
  \end{eqnarray}
with initial value $\alpha^{(0)}\in D(\mathcal{T})$ and relaxation parameter 
\begin{equation}\label{parameter}
   \omega\in \bigg(0, \frac{1}{C^{2}}\bigg)
  \end{equation}
  with 
  \begin{equation}\label{parameter1}
  C:=\sup\{\|\mathcal{T}'(\alpha)\|: \alpha\in\mathcal{B}_{\rho}(\alpha^{(0)})\}.
  \end{equation}
  We always denote by $\mathcal{B}_{\rho}(\alpha^{(0)})$ the closed ball centered about $\alpha^{(0)}$ with radius $\rho>0$. In (\ref{Landweberalpha}) respectively (\ref{LandweberalphaQ}) we can see that we have to solve in every iteration step of the attenuated Landweber method respectively once the forward problem (\ref{cauchy-hyper-comb}), (\ref{anfangswert1})--(\ref{randwert}) and the adjoint problem (\ref{backward-1}), (\ref{backward-2})--(\ref{backward-3}). 
  So we need at first an algorithm for solving the forward problem. Because of the second time derivative in the differential equation we split initially the differential equation and then we get with $r(t,x):=\dot{u}(t,x)$ for all $(t,x)\in[0,T]\times\Omega$ the equivalent formulation
  \begin{eqnarray}\label{splitCauchy}
 \begin{cases}
    \dot{u}(t,x) - r(t,x) = 0 \\
    \rho\dot{r}(t,x) - \nabla\cdot\nabla_{Y}C(x,J u(t,x)) = f(t,x)
    \end{cases}
 \end{eqnarray}
  for all $(t,x)\in[0,T]\times\Omega$. We discretized this equation system at first in time with the $\theta$-method with $\theta\in[0,1]$. Let $I=[0,T]$ be equidistantly partitioned in $m>0$ equal time steps of length $k=T/m$ and points $t_{j}=jk$, $j=0,...,m$. Then we get with $u^{i}=u(t_{i})$ for all $i=0,...,m$ in every time step $j$ for all $j=1,...,m$ the following formulation of the time discretized equations
  \[
   \begin{cases}
    \frac{u^{j}-u^{j-1}}{k} = \theta r^{j} + (1-\theta)r^{j-1}\\
    \rho\frac{r^{j}-r^{j-1}}{k} = \nabla\cdot\nabla_{Y}C(x,\theta J u^{j} + (1-\theta)J u^{j-1}) + \theta f^{j} + (1-\theta)f^{j-1}.
   \end{cases}
  \]
  After some reformulations we get the system 
  \begin{eqnarray}\label{zeitdiskr}
   \begin{cases}
   u^{j} = u^{j-1} + kr^{j-1} + \frac{k^{2}\theta}{\rho}\nabla\cdot\nabla_{Y}C(x,\theta J u^{j} + (1-\theta)J u^{j-1})\\
   \;\;\;\;\;\;\;\; + \frac{k^{2}\theta^{2}}{\rho}f^{j} + \frac{k^{2}\theta(1-\theta)}{\rho}f^{j-1}\\
   r^{j} = r^{j-1} + \frac{k}{\rho}\nabla\cdot\nabla_{Y}C(x,\theta J u^{j} + (1-\theta)J u^{j-1}) + \frac{k\theta}{\rho}f^{j} + \frac{k(1-\theta)}{\rho}f^{j-1}.
   \end{cases}
  \end{eqnarray}
  It is easy to see that the first equation is not linear in $u^{j}$ and the second one is linear in $r^{j}$. That is the reason why we have to implement a nonlinear solver for the first equation. Then we can discretize in space and solve both equations. For using the Newton method to solve the first equation we define 
   \begin{eqnarray}\label{FNewton}
   F(u_{l}^{j}) &=& u_{l}^{j} - u^{j-1} - kr^{j-1} - \frac{k^{2}\theta}{\rho}\nabla\cdot\nabla_{Y}C(x,\theta Ju_{l}^{j} + (1-\theta) Ju^{j-1}) \nonumber \\
   &&- \frac{k^{2}\theta^{2}}{\rho}f^{j} -\frac{k^{2}\theta(1-\theta)}{\rho}f^{j-1}
  \end{eqnarray}
  with the iteration index $l=0,1,..$ of the Newton method. Then the first equation in (\ref{zeitdiskr}) is equal to the nonlinear equation $F(u_{l}^{j})=0$. The Newton method yields a solution of this equation determining 
  $\partial u_{l}^{j}$ with 
  \[ F'(u_{l}^{j})\partial u_{l}^{j} = - F(u_{l}^{j})\]
  with
  \begin{eqnarray}\label{FAblFormel}
   F'(u_{l}^{j})\partial u_{l}^{j} = \partial u_{l}^{j} - \frac{k^{2}\theta^{2}}{\rho}\nabla\cdot[\nabla_{Y}\nabla_{Y}C(x,\theta J u_{l}^{j} + (1-\theta)J u^{j-1}):J(\partial u_{l}^{j})] 
  \end{eqnarray}
  and after that setting
  \[u_{l+1}^{j} = u_{l}^{j} + \partial u_{l}^{j}\]
  for all $l=0,1,..$ with $u_{0}^{j}=u^{j-1}$ until a sufficient accuracy is achieved. Before we discretize the reformulated time discretized equations we want to present the following weak formulation of the splitted representation of the problem including the nonlinear solver in every time step $t_{j}=jk$. Therefore we choose $H_{0}^{1}(\Omega,\RR^3)$ as solution and test space. 
  Then we get:\\
  \vspace{4mm}
   
   \textit{Find $\partial u_{l}^{j}\in H_{0}^{1}(\Omega,\mathbb{R}^{3})$ such that
   \begin{eqnarray}\label{schwachNewton} 
   \langle F'(u_{l}^{j})\partial u_{l}^{j},\varphi\rangle_{L^{2}(\Omega,\mathbb{R}^{3})} = - \langle F(u_{l}^{j}),\varphi\rangle_{L^{2}(\Omega,\mathbb{R}^{3})}
   \end{eqnarray}
   for all $\varphi\in H_{0}^{1}(\Omega,\mathbb{R}^{3})$ and 
   set
   \[u_{l+1}^{j} = u_{l}^{j} + \partial u_{l}^{j}\]
   for all $l=0,1,..$. \\
   Find $r^{j}\in H_{0}^{1}(\Omega,\mathbb{R}^{3})$ such that 
   \begin{eqnarray}\label{schwachV}
    &&\langle r^{j},\varphi\rangle_{L^{2}(\Omega,\mathbb{R}^{3})}\nonumber \\
    &=& \langle r^{j-1},\varphi\rangle_{L^{2}(\Omega,\mathbb{R}^{3})} + \frac{k}{\rho}\langle\nabla\cdot\nabla_{Y}C(x,\theta J u^{j} + (1-\theta)J u^{j-1}),\varphi\rangle_{L^{2}(\Omega,\mathbb{R}^{3})}\\ 
    && + \frac{k\theta}{\rho}\langle f^{j},\varphi\rangle_{L^{2}(\Omega,\mathbb{R}^{3})} + \frac{k(1-\theta)}{\rho}\langle f^{j-1},\varphi\rangle_{L^{2}(\Omega,\mathbb{R}^{3})}\nonumber
   \end{eqnarray}
   for all $\varphi\in H_{0}^{1}(\Omega,\mathbb{R}^{3})$.\\
   }
   \vspace{4mm}
   \\
   For the discretization in space of the weak formulation of the time discretized problem we use the Finite Element method. In particular for the implementation we used the C++ finite element library deal.II, see \cite{BangerthHartmannKanschat2007}. 
   Let be $\mathcal{V}_{h}$ a finite dimensional $H_{0}^{1}(\Omega,\RR^3)$ conform finite element space with nodal basis $\{\varphi_{1},...,\varphi_{L}\}$ with $\mbox{dim}\mathcal{V}_{h}=L<\infty$. Then we can expand all functions in the weak formulation given above in terms of the nodal basis. Hence we want to denote by a capital letter the vector of the coefficients of a function. That means for 
   example $u^{j} = \sum_{r=1}^{L}U_{r}^{j}\varphi_{r}\in H_{0}^{1}(\Omega,\mathbb{R}^{3})$ mit $U^{j}\in\mathbb{R}^{L}$. Then we get after some reformulations the following matrix equations at each time step,
  \begin{eqnarray}\label{system}
     \begin{cases}
      F'_{h}(U^{j,l})\partial U^{j,l} = - F_{h}(U^{j,l})\\
      U^{j,l+1} = U^{j,l} + \partial U^{j,l},\; U^{j,0}=U^{j-1} \\
      MR^{j} = MR^{j-1} - \frac{k}{\rho}D(u^{j},u^{j-1}) + \frac{k\theta}{\rho}MF^{j} + \frac{k(1-\theta)}{\rho}MF^{j-1}.
     \end{cases}
    \end{eqnarray}
    with the mass matrix 
    \[ M = (\langle \varphi_{r},\varphi_{s}\rangle_{H})_{r,s=1,...,L}\in\mathbb{R}^{L\times L},\]
    \begin{small}
    \begin{eqnarray}\label{FForm}
     F_{h}(U^{j,l}) 
     &=& M\big(U^{j,l}-U^{j-1}-kR^{j-1}\big) + \frac{k^{2}\theta}{\rho}D(u_{l}^{j},u^{j-1}) - \frac{k^{2}\theta^{2}}{\rho}MF^{j} \\
     && - \frac{k^{2}(1-\theta)\theta}{\rho}MF^{j-1}.\nonumber
    \end{eqnarray}
    \end{small}%
    and
    \begin{eqnarray}\label{FStrichForm}
     F'_{h}(U^{j,l})\partial U^{j,l} = \big(M + \frac{k^{2}\theta^{2}}{\rho}A(u_{l}^{j},u^{j-1})\big)\partial U^{j,l}.
    \end{eqnarray}
    In addition we used the matrix $A(u_{l}^{j},u^{j-1})\in\RR^{L\times L}$ with the entries
    \begin{eqnarray*}
    A_{rs}(u_{l}^{j},u^{j-1}) = \int\limits_{\Omega}(\nabla_{Y}\nabla_{Y}C(x,\theta J u_{l}^{j}+(1-\theta)J u^{j-1}):J\varphi_{r}):J\varphi_{s}\, dx 
   \end{eqnarray*}
   for all $r,s=1,...,L$ and the vector $D(u_{l}^{j},u^{j-1})\in\RR^{L}$ with 
   \begin{eqnarray*}
     D_{s}(u_{l}^{j},u^{j-1}) = \int\limits_{\Omega}\nabla_{Y}C(x,\theta J u_{l}^{j}+(1-\theta)J u^{j-1}):J\varphi_{s}\, dx. 
    \end{eqnarray*}
    for all $s=1,...,L$. \\
    Finally we get the following algorithm for solving the forward problem using the Conjugate Gradient (CG) method for solving the first equation in (\ref{FStrichForm}). \\
    \begin{algorithm}\label{vorw1} ($u=\mbox{forw}(\alpha)$)\\ 
  Input: $\alpha\in\mathbb{R}^{(n+1)\times(n+1)}$
  \begin{enumerate}
   \item Set $U^{0}=0\in\mathbb{R}^{L}$.
   \item Set $MR^{0}=0\in\mathbb{R}^{L}$.
   \item For every $j=1,...,m$ do
         \begin{enumerate}
         \item[3.1] Set $l=0$.
         \item[3.2] Set $U^{j,0}=U^{j-1}$.
         \item[3.3] Compute $F_{h}(U^{j,0}) = -kMR^{j-1} + \frac{k^{2}\theta}{\rho}D(u_{0}^{j},u^{j-1}) - \frac{k^{2}\theta^{2}}{\rho}MF^{j} - \frac{k^{2}(1-\theta)\theta}{\rho}MF^{j-1}$.
         \item[3.4] Compute $F'_{h}(U^{j,0}) = M + \frac{k^{2}\theta^{2}}{\rho}A(u_{0}^{j},u^{j-1})$.
         \item[3.5] Compute $\partial U^{j,0}$ with $F'_{h}(U^{j,0})\partial U^{j,0} = - F_{h}(U^{j,0})$.
         \item[3.6] Set $U^{j,1}=U^{j,0}+\partial U^{j,0}$.
         \item[3.7] While ($\|F_{h}(U^{j,l})\|>\mbox{tol}$) do
               \begin{enumerate}
               \item Set $l=l+1$.
               \item Compute \begin{small}
                              \begin{eqnarray*} 
                                F_{h}(U^{j,l})  
                                &=&M\big(U^{j,l}-U^{j-1}-kR^{j-1}\big)+\frac{k^{2}\theta}{\rho}D(u_{l}^{j},u^{j-1})-\frac{k^{2}\theta^{2}}{\rho}MF^{j} \\
                                &&-\frac{k^{2}(1-\theta)\theta}{\rho}MF^{j-1}.
                              \end{eqnarray*} 
                              \end{small}%
               \item Compute $F'_{h}(U^{j,l}) = M + \frac{k^{2}\theta^{2}}{\rho}A(u_{l}^{j},u^{j-1})$. 
               \item Compute $\partial U^{j,l}$ with $F'_{h}(U^{j,l})\partial U^{j,l} = - F_{h}(U^{j,l})$.
               \item Set $U^{j,l+1}=U^{j,l}+\partial U^{j,l}$.
               \end{enumerate}
         \item[3.8] Set $MR^{j} = MR^{j-1} - \frac{k}{\rho}D(u^{j},u^{j-1}) + \frac{k\theta}{\rho}MF^{j} + \frac{k(1-\theta)}{\rho}MF^{j-1}$.
         \end{enumerate}
  \end{enumerate}
  Output: $U^{j}\in\mathbb{R}^{L}$ for $j=0,...,m$. 
  \end{algorithm}
  \vspace{4mm}
  \\
  We proceed in the same way to develop an algorithm to solve the adjoint problem. The advantage is that this problem is linear. At first we split the corresponding differential equation with $q(t,x):=\dot{p}(t,x)$ for all 
  $(t,x)\in[0,T]\times\Omega$, too. Then we get the equivalent system 
  \begin{eqnarray}\label{splitbackward}
 \begin{cases}
    \dot{p}(t,x) - q(t,x) = 0 \\
    \rho\dot{q}(t,x) - \nabla\cdot[\nabla_{Y}\nabla_{Y}C(x,J u(t,x)):J p(t,x)] = w(t,x)
    \end{cases}
 \end{eqnarray}
 for all $(t,x)\in[0,T]\times\Omega$. After time discretization by the $\theta$-method separating $[0,T]$ in $m>0$ equal time steps with fix length $k=T/m$ and time points $t_{i}=ik$, $i=0,...,m$, we have in time step $j$ for all $j=0,...,m-1$
 \begin{eqnarray*}
   \begin{cases}
    \frac{p^{j}-p^{j+1}}{k} = \theta q^{j} + (1-\theta)q^{j+1}\\
    \rho\frac{q^{j}-q^{j+1}}{k} = \nabla\cdot[\nabla_{Y}\nabla_{Y}C(x,\theta J u^{j} + (1-\theta)J u^{j+1}):(\theta J p^{j} + (1-\theta)J p^{j+1})] \\
    \;\;\;\;\;\;\;\;\;\;\;\;\;\;\;\;\;\;\; + \theta w^{j} + (1-\theta)w^{j+1}.
   \end{cases}
  \end{eqnarray*}
  and with some reformulations
  \[
   \begin{cases}
   p^{j} = p^{j+1} + kq^{j+1} + \frac{k^{2}\theta^{2}}{\rho}\nabla\cdot[\nabla_{Y}\nabla_{Y}C(x,\theta J u^{j} + (1-\theta)J u^{j+1}):J p^{j}]\\
   \;\;\;\;\;\;\;\; + \frac{k^{2}(1-\theta)\theta}{\rho}\nabla\cdot[\nabla_{Y}\nabla_{Y}C(x,\theta J u^{j} + (1-\theta)J u^{j+1}):J p^{j+1}]\\
   \;\;\;\;\;\;\;\; + \frac{k^{2}\theta^{2}}{\rho}w^{j} + \frac{k^{2}(1-\theta)\theta}{\rho}w^{j+1}\\
   q^{j} = q^{j+1} + \frac{k\theta}{\rho}\nabla\cdot[\nabla_{Y}\nabla_{Y}C(x,\theta J u^{j} + (1-\theta)J u^{j+1}):J p^{j}]\\
   \;\;\;\;\;\;\;\; + \frac{k(1-\theta)}{\rho}\nabla\cdot[\nabla_{Y}\nabla_{Y}C(x,\theta J u^{j} + (1-\theta)J u^{j+1}):J p^{j+1}]+ \frac{k\theta}{\rho}w^{j} + \frac{k(1-\theta)}{\rho}w^{j+1}.
   \end{cases}
  \]
  Because both equations of this system are linear we can directly discretize the equations in space and after that solve the system. For this purpose we use the same notations as before. Additionally let be 
  $w\in \mathcal{V}'_{h}\subset V'$. Then we get the dual basis $(\varphi '_{r})_{r=1,...,L}$ to $(\varphi_{r})_{r=1,...,L}$, where $\langle\varphi '_{r},\varphi_{s}\rangle=\delta_{rs}$ is
   for all $r,s=1,...,L$. We obtain for the adjoint problem the system 
   \begin{eqnarray}\label{systemadj}
     \begin{cases}
      MP^{j} = MP^{j+1} + kMQ^{j+1} - \frac{k^{2}\theta^{2}}{\rho}A(u^{j},u^{j+1})P^{j} - \frac{k^{2}(1-\theta)\theta}{\rho}A(u^{j},u^{j+1})P^{j+1}\\
      \;\;\;\;\;\;\;\;\;\;\;\;\;+\frac{k^{2}\theta^{2}}{\rho}W^{j} + \frac{k^{2}(1-\theta)\theta}{\rho}W^{j+1}\\
      MQ^{j} = MQ^{j+1} - \frac{k\theta}{\rho}A(u^{j},u^{j+1})P^{j} - \frac{k(1-\theta)}{\rho}A(u^{j},u^{j+1})P^{j+1}\\
      \;\;\;\;\;\;\;\;\;\;\;\;\;+ \frac{k\theta}{\rho}W^{j} + \frac{k(1-\theta)}{\rho}W^{j+1}.
     \end{cases}
    \end{eqnarray}
    In the end with some reformulations and using the definitions 
    \begin{eqnarray}\label{S0}
   S_{0} := M + \frac{k^{2}\theta^{2}}{\rho}A(u^{j},u^{j+1})\in\mathbb{R}^{L\times L}
  \end{eqnarray}
  and 
  \begin{eqnarray}\label{S1}
   S_{1} := M-\frac{k^{2}(1-\theta)\theta}{\rho}A(u^{j},u^{j+1})\in\mathbb{R}^{L\times L}
  \end{eqnarray}
  we can reformulate (\ref{systemadj}) as
  \begin{eqnarray}\label{systemadj3}
     \begin{cases}
      S_{0}P^{j} = S_{1}P^{j+1}+ kMQ^{j+1}+\frac{k^{2}\theta^{2}}{\rho}W^{j} + \frac{k^{2}(1-\theta)\theta}{\rho}W^{j+1} \\
      MQ^{j} = MQ^{j+1} - \frac{k\theta}{\rho}A(u^{j},u^{j+1})P^{j} - \frac{k(1-\theta)}{\rho}A(u^{j},u^{j+1})P^{j+1}\\
      \;\;\;\;\;\;\;\;\;\;\;\;\;+ \frac{k\theta}{\rho}W^{j} + \frac{k(1-\theta)}{\rho}W^{j+1}.
     \end{cases}
    \end{eqnarray}
    Using again the CG method for solving the first equation of (\ref{systemadj}) we preserve the following algorithm for solving the adjoint problem:\\
  \begin{algorithm} \label{adj1} ($p=\mbox{adj1}(w,u)$) \\
  Input: $W^{j}\in\mathbb{R}^{L}$ and $u^{j}\in H_{0}^{1}(\Omega,\mathbb{R}^{3})$ for $j=0,...,m$
  \begin{enumerate}
   \item Set $P^{m}=0\in\mathbb{R}^{L}$.
   \item Set $MQ^{m}=0\in\mathbb{R}^{L}$.
   \item For every $j=m-1,...,0$ do
         \begin{enumerate}
         \item[3.1] Compute $A(u^{j},u^{j+1})$.
         \item[3.2] Compute $S_{0} = M + \frac{k^{2}\theta^{2}}{\rho}A(u^{j},u^{j+1})$.
         \item[3.3] Compute $S_{1} = M-\frac{k^{2}(1-\theta)\theta}{\rho}A(u^{j},u^{j+1})$.
         \item[3.4] Compute $P^{j}$ with \[S_{0}P^{j} = S_{1}P^{j+1}+ kMQ^{j+1}+\frac{k^{2}\theta^{2}}{\rho}W^{j} + \frac{k^{2}(1-\theta)\theta}{\rho}W^{j+1}.\]
         \item[3.5] Compute $MQ^{j}$ with \small{\begin{eqnarray*}
                                           MQ^{j} 
                                           &=& MQ^{j+1} - \frac{k\theta}{\rho}A(u^{j},u^{j+1})P^{j} - \frac{k(1-\theta)}{\rho}A(u^{j},u^{j+1})P^{j+1}\\
                                           &&+ \frac{k\theta}{\rho}W^{j} + \frac{k(1-\theta)}{\rho}W^{j+1}.
                                          \end{eqnarray*}}
         \end{enumerate}
  \end{enumerate}
  Output: $P^{j}\in\mathbb{R}^{L}$ for $j=0,...,m$. 
  \end{algorithm}
  \vspace{4mm} 
  \\

  Finally we discretize the observation operator in the same way as in \cite{BINDER;SCHUSTER:15}. 
 Let be $v\in \mathcal{V}_{h}$. Then we can write $v=\sum_{r=1}^{L}{V}_{r}\varphi_{r}$ with the basis $(\varphi_{r})_{r=1,...,L}$ of $\mathcal{V}_{h}$. It follows
 \begin{eqnarray}\label{BeobachtungDisk}
  \mathcal{Q}[v](t) = \Big(\sum\limits_{r=1}^{M}{V}_{r}(t)\int\limits_{\partial\Omega}\langle g_{k},\varphi_{r}\rangle_{\mathbb{R}^{3}}\,d\xi\Big)_{k=1,...,l} = (G({V}_{r}))_{r=1,...,L}
 \end{eqnarray}
 with $G\in\mathbb{R}^{l\times L}$ the matrix, which represents $\mathcal{Q}|_{\mathcal{V}_{h}}$ in the bases $(e_{j})_{j=1,...,l}$ of $\mathbb{R}^{l}$ and $(\varphi_{r})_{r=1,...,L}$ of $\mathcal{V}_{h}$. 
 In addition we can identify $\mathbb{R}^{l}$ respectively $L^{2}(0,T;\mathbb{R}^{l})$ with its dual space, if we choose the standard scalar product in $\mathbb{R}^{l}$. 
 Then the matrix $G^{\top}$ is exactly the representation of $Q^{*}:\,\mathbb{R}^{l}\rightarrow \mathcal{V}'_{h}$ in the bases $(e_{j})_{j=1,...,l}$ of $\mathbb{R}^{l}$ and $(\varphi'_{r})_{r=1,...,L}$ of $\mathcal{V}'_{h}$. \\
 If we choose the weight functions $g_{k}$, $k=1,...,l$, also from $\mathcal{V}_{h}$, which means $g_{k}=\sum_{r=1}^{L}G^{k}_{r}\varphi_{r}$ for all $k=1,...,l$, then we obtain by (\ref{BeobachtungDisk})
 \begin{eqnarray*}
   \mathcal{Q}[v](t) &=& \Big(\sum\limits_{r=1}^{L}{V}_{r}(t)\int\limits_{\partial\Omega}\langle g_{k},\varphi_{r}\rangle_{\mathbb{R}^{3}}\,d\xi\Big)_{k=1,...,l} \\
   &=& \Big(\sum\limits_{r=1}^{L}\sum\limits_{s=1}^{L}{V}_{r}(t)G^{k}_{s}\int\limits_{\partial\Omega}\langle \varphi_{s},\varphi_{r}\rangle_{\mathbb{R}^{3}}\,d\xi\Big)_{k=1,...,l} \\
   &=& \bar{G}M_{\partial\Omega}({V}_{r})_{r=1,...,L},
 \end{eqnarray*}
 where $M_{\partial\Omega}\in\mathbb{R}^{L\times L}$ with $(M_{\partial\Omega})_{rs}=\int_{\partial\Omega}\langle\varphi_{r},\varphi_{s}\rangle_{\mathbb{R}^3}\, d\xi$ for all $r,s=1,...,L$ is the boundary mass matrix and $\bar{G}\in\mathbb{R}^{l\times L}$ the coefficient matrix of the sensors with $\bar{G}_{ks} = G^{k}_{s}$ for all $k=1,...,l$ and $s=1,...,L$. 
 Then we get
 \begin{eqnarray*} 
  G = \bar{G}M_{\partial\Omega}\;\;\mbox{ and }\;\; G^{\top} = M_{\partial\Omega}\bar{G}^{\top}.
 \end{eqnarray*}
 With that discretization of the observation operator we are able to present the algorithm for the adjoint problem in the case (IP II) of incomplete data. We get \\
 \begin{algorithm} \label{adj2} ($p=\mbox{adj2}(w,u)$) \\
  Inpute: $W^{j}\in\mathbb{R}^{l}$ and $u^{j}\in H_{0}^{1}(\Omega,\mathbb{R}^{3})$ for $j=0,...,m$
  \begin{enumerate}
   \item Set $P^{m}=0\in\mathbb{R}^{L}$.
   \item Set $MQ^{m}=0\in\mathbb{R}^{L}$.
   \item For every $j=m-1,...,0$ do
         \begin{enumerate}
         \item[3.1] Compute $A(u^{j},u^{j+1})$.
         \item[3.2] Compute $S_{0} = M + \frac{k^{2}\theta^{2}}{\rho}A(u^{j},u^{j+1})$.
         \item[3.3] Compute $S_{1} = M-\frac{k^{2}(1-\theta)\theta}{\rho}A(u^{j},u^{j+1})$.
         \item[3.4] Compute $P^{j}$ with \[S_{0}P^{j} = S_{1}P^{j+1}+ kMQ^{j+1}+\frac{k^{2}\theta^{2}}{\rho}M_{\partial\Omega}\bar{G}^{\top}W^{j} + \frac{k^{2}(1-\theta)\theta}{\rho}M_{\partial\Omega}\bar{G}^{\top}W^{j+1}.\]
         \item[3.5] Compute $MQ^{j}$ with \small{\begin{eqnarray*}
                                           MQ^{j} 
                                           &=& MQ^{j+1} - \frac{k\theta}{\rho}A(u^{j},u^{j+1})P^{j} - \frac{k(1-\theta)}{\rho}A(u^{j},u^{j+1})P^{j+1}\\
                                           &&+ \frac{k\theta}{\rho}M_{\partial\Omega}\bar{G}^{\top}W^{j} + \frac{k(1-\theta)}{\rho}M_{\partial\Omega}\bar{G}^{\top}W^{j+1}.
                                          \end{eqnarray*}}
         \end{enumerate}
  \end{enumerate}
  Output: $P^{j}\in\mathbb{R}^{L}$ for $j=0,...,m$. 
  \end{algorithm}
  \vspace{4mm}
  \\
  We have all ingredients to formulate the algorithm for solving the inverse problem (IP II).\\
  \begin{algorithm} \label{inverse} ($\alpha=\mbox{inv}(y)$) \\
  Input: $y^{j}$ for $j=0,...,m$, basis functions $\varphi_{r}$ with $r=1,...,L$.
  \begin{enumerate}
   \item Compute the matrices $M$, $M_{\partial\Omega}$ and $\bar{G}$ with         
          \[M_{rs} = \langle\varphi_{r},\varphi_{s}\rangle_{H},\;\forall r,s=1,...,L \]
          \[(M_{\partial\Omega})_{rs} = \langle\varphi_{r},\varphi_{s}\rangle_{L^{2}(\partial\Omega,\mathbb{R}^{3})},\;\forall r,s=1,...,L \]
         and 
         \[\bar{G}_{ks} = G^{k}_{s},\;\forall k=1,...,l, s=1,...,L.\]         
   \item Set $\delta=1$.
   \item Set $\alpha = (\alpha_{rs})_{r,s=0,...,n}=1$.
   \item Set $i=0$
   \item While ($i<\mbox{maxiter}$) and ($\delta>\mbox{tol}$) do
               \begin{enumerate}
               \item[5.1] Set $i=i+1$.
               \item[5.2] Compute $u=\mbox{vorw}(\alpha)$.
               \item[5.3] Compute $\bar{u}=\bar{G}M_{\partial\Omega}u$.
               \item[5.4] Set $w=\bar{u}-y$.
               \item[5.5] Set $\delta = \frac{T}{m}\Big[\frac{1}{2}\|w^{0}\|_{\mathbb{R}^{l}}^{2}+\sum\limits_{j=1}^{m-1}\|w^{j}\|_{\mathbb{R}^{l}}^{2}+\frac{1}{2}\|w^{m}\|_{\mathbb{R}^{l}}^{2}\Big]$.
               \item[5.6] Compute $p=\mbox{adj2}(w,u)$.
               \item[5.7] For all $r,s=0,...,n$: 
                          \begin{enumerate}
                             \item[5.7.1] Compute $z^{j}=\mbox{compadj}(r,s,u^{j},p^{j})$ $\forall j=0,...,m$.
                             \item[5.7.2] Set $\gamma = \frac{T}{m}\Big[\frac{1}{2}z^{0}+\sum\limits_{j=1}^{m-1}z^{j}+\frac{1}{2}z^{m}\Big]$.
                             \item[5.7.3] Compute $\alpha_{rs} = \alpha_{rs} + \omega\gamma$.
                          \end{enumerate}
              \end{enumerate}
  \end{enumerate}
  Output: $\alpha=(\alpha_{rs})_{r,s=0,...,n}\in\mathbb{R}^{(n+1)\times(n+1)}$. 
  \end{algorithm}
  \vspace{4mm}
  \\
  Because the modifications of the algorithm for solving (IP I) are dispensable we omit to write the algorithm separately. 

\section{Convergence result}\label{sec:convergence}
In this section we prove local convergence of the attenuated Landweber iteration (\ref{Landweberalpha}) applied to (\ref{allgGlg}). 
Therefore let $u,\bar{u}\in\mathcal{A}(M_{0},M_{1},M_{2},M_{3},M_{4})$ be two solutions to the initial boundary value problem (\ref{cauchy-hyper-comb}), (\ref{anfangswert1})--(\ref{randwert}) corresponding to the parameters, initial values and right-hand sides $(\alpha, u_{0}, u_{1}, f)$ respectively $(\bar{\alpha}, u_{0}, u_{1}, f)$.
First we prove the local tangential cone condition for the parameter-to-solution map associated to the considered identification problem, because it is necessary for our proof of convergence of the attenuated Landweber method. For the proof of the cone condition we need a technical result.\\

\begin{lemma}\label{dNorm}
For $d=u(\alpha)-u(\bar{\alpha})-\mathcal{T}'(\alpha)h$ with $h=\alpha-\bar{\alpha}\in\RR^N$ under the assumption $\|h\|_{\infty}\leq r$ for a sufficiently small $r>0$ there is
\[d\in L^{2}(0,T;U)\cap H^{1}(0,T;H).\]
Thereby we define
\[\|z\|_{L^{2}(0,T;U)\cap H^{1}(0,T;H)}:=\|z\|_{L^{2}(0,T;U)}+\|z\|_{H^{1}(0,T;H)}\]
for all $z\in L^{2}(0,T;U)\cap H^{1}(0,T;H)$.\\[1ex]
\end{lemma}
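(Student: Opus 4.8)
The plan is to use that $L^{2}(0,T;U)\cap H^{1}(0,T;H)$ is a linear space and to show separately that the two ingredients of $d$ lie in it, namely the difference of nonlinear solutions $w:=u(\alpha)-u(\bar{\alpha})$ and the derivative $v:=\mathcal{T}'(\alpha)h$; then $d=w-v$ lies in it as well. The smallness of $r$ enters only to keep $\bar{\alpha}=\alpha-h$ inside $\Drm(\mathcal{T})$, so that $u(\bar{\alpha})$ is a well-defined element of $\mathcal{A}$; here $\alpha$ is taken to be an interior point of $\Drm(\mathcal{T})$, which is anyway required for $\mathcal{T}'(\alpha)$ to be defined (Lemma \ref{frechet-system}), so $u(\alpha)\in\mathcal{A}$ as well.

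For $w$ I would first invoke the definition (\ref{domain-T}) of $\Drm(\mathcal{T})$ to obtain $u(\alpha),u(\bar{\alpha})\in\mathcal{A}=\mathcal{A}(M_{0},M_{1},M_{2},M_{3},M_{4})$, and then verify the embedding $\mathcal{A}\subset L^{2}(0,T;U)\cap H^{1}(0,T;H)$ directly from the definition (\ref{bedA}). Indeed, $u\in\mathcal{A}$ gives $u\in W^{1,\infty}((0,T),H^{1}(\Omega,\RR^{3}))$, i.e.\ $u,\dot{u}\in L^{\infty}((0,T),H^{1}(\Omega,\RR^{3}))$; since $(0,T)$ is bounded this yields $u\in L^{2}(0,T;V)$ and, via $H^{1}(\Omega,\RR^{3})\hookrightarrow H$, also $\dot{u}\in L^{2}(0,T;H)$, hence $u\in H^{1}(0,T;H)$. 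Moreover $u(\alpha)$ and $u(\bar{\alpha})$ solve the IBVP, so they satisfy the homogeneous Dirichlet condition (\ref{randwert}); therefore $u(t,\cdot)\in H_{0}^{1}(\Omega,\RR^{3})=U$ for almost every $t\in(0,T)$ and, by the Poincar\'e inequality (\ref{poincare-1}), $u\in L^{2}(0,T;U)$. Taking the difference, $w=u(\alpha)-u(\bar{\alpha})$ belongs to the target space.

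For $v$ I would rely on the results recalled in Section \ref{sec:setup}: by Theorems \ref{gateaux-ex} and \ref{Tadjungiert} the Fr\'echet derivative $\mathcal{T}'(\alpha)$ maps $\RR^{N}$ into the Hilbert space $\mathcal{X}$, so $v\in\mathcal{X}$, and Lemma \ref{stetigeinb} provides the continuous embedding $\mathcal{X}\hookrightarrow L^{2}(0,T;U)\cap H^{1}(0,T;H)$. If one prefers to see $v\in\mathcal{X}$ from scratch, it amounts to checking that the source term $\nabla\cdot[\nabla_{Y}C_{h}(x,Ju)]$ of the sensitivity equation (\ref{gateaux-diff}) lies in $L^{2}(0,T;H)$: expanding $\partial_{j}[\partial_{Y_{ij}}C_{h}(x,Ju)]$ with the chain rule produces a term bounded by (\ref{beschr3}) and a term $\sum_{k,l}\partial_{Y_{kl}}\partial_{Y_{ij}}C_{h}(x,Ju)\,\partial_{j}\partial_{l}u_{k}$, whose first factor is bounded by (\ref{bed2}) and whose second factor is controlled in $L^{\infty}((0,T)\times\Omega)$ by $M_{3}$ because $u\in\mathcal{A}$; boundedness of $\Omega$ and $(0,T)$ then gives the required $L^{2}$-bound.

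Combining the two steps yields $d=w-v\in L^{2}(0,T;U)\cap H^{1}(0,T;H)$. I do not expect a serious obstacle here: the claim is mere membership in a function space, not a quantitative estimate, so the linear-combination argument is enough, and the only mildly technical point is the square-integrability of the sensitivity source term used for $v\in\mathcal{X}$, which is exactly what the derivative bounds (\ref{bed2}), (\ref{beschr3}) on the dictionary functions together with the $M_{3}$-bound built into $\mathcal{A}$ are there to guarantee. One should, however, make sure that the threshold $r$ is chosen small enough that $\bar{\alpha}$ really remains in $\Drm(\mathcal{T})$, which is possible since $\alpha\in\mathrm{int}(\Drm(\mathcal{T}))$.
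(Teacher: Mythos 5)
Your argument is correct, but it is genuinely different from the one in the paper. You prove mere membership by splitting $d=\big(u(\alpha)-u(\bar{\alpha})\big)-\mathcal{T}'(\alpha)h$ and handling the two pieces qualitatively: the difference of solutions lies in $L^{2}(0,T;U)\cap H^{1}(0,T;H)$ because $\mathcal{A}$ embeds into that space (via $W^{1,\infty}((0,T),H^{1})$ and the homogeneous Dirichlet condition), and $\mathcal{T}'(\alpha)h\in\mathcal{X}$ combined with Lemma \ref{stetigeinb} handles the linearized part; your verification that the source term of (\ref{gateaux-diff}) lies in $L^{2}(0,T;H)$, using (\ref{bed2}), (\ref{beschr3}) and the $M_{3}$-bound in (\ref{bedA}), is exactly the point needed to place $v$ in $\mathcal{X}=B^{-1}(L^{2}(0,T;H))$. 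The paper instead estimates $d$ itself: it recycles the quantitative Gronwall estimates from the proof of the Fr\'{e}chet differentiability result (Theorem \ref{thm-glm-konv} in \cite{SEYDEL;SCHUSTER:16}) to bound $\|d\|_{L^{2}(0,T;U)}$, $\|d\|_{L^{2}(0,T;H)}$ and $\|\dot{d}\|_{L^{2}(0,T;H)}$ each by a constant times $\|h\|_{\infty}^{3/2}$. The paper's route therefore delivers more than the lemma states, namely an upgrade of the estimate (\ref{glm-konv-ungl}) to the stronger norm $\|\cdot\|_{L^{2}(0,T;U)\cap H^{1}(0,T;H)}$, at the cost of re-entering the technical Gronwall machinery; your route is more elementary and gives only membership, which is, however, all that the lemma claims and all that is used of it in the proof of Theorem \ref{kegelbed}. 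Your observation that $r$ only needs to be small enough to keep $\bar{\alpha}=\alpha-h$ inside $\Drm(\mathcal{T})$ is also accurate.
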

\begin{proof}
Let be $u=u(\alpha)$ and $\bar{u}=u(\bar{\alpha})$. \\
With the definition of the norm of $L^{2}(0,T;U)\cap H^{1}(0,T;H)$ we get at first
\[\|d\|_{L^{2}(0,T;U)\cap H^{1}(0,T;H)}\leq \|d\|_{L^{2}(0,T;U)}+\|d\|_{H^{1}(0,T;H)}.\]
We estimate the two summands on the right hand side separately. \\
For the first one we obtain with (the proof of) Theorem \ref{glm-konv-ungl} (see \cite{SEYDEL;SCHUSTER:16})  
\[\|d\|_{L^{2}(0,T;U)}\leq\bar{L}_{2}\|h\|_{\infty}^{\frac{3}{2}}\leq\bar{L}_{2}r^{\frac{3}{2}}<\infty\]
with 
\begin{eqnarray}\label{L2bar}
  \bar{L}_{2} 
  :=\sqrt{2}\bigg(\frac{2(B_{1}+B_{3})}{\kappa(\alpha)}\exp(bT)T + \frac{4(B_{2}(h)+B_{4}\|h\|_{\infty}^{\frac{1}{2}})^{2}}{b^{2}\kappa(\alpha)^{2}}\bigg(\exp\bigg(\frac{1}{2}bT\bigg)-1\bigg)^{2}T\bigg)^{\frac{1}{2}},  
  \end{eqnarray}
  where $\bar{L}_{2}\in(0,\infty)$ holds true. \\
For the second summand we consider
\begin{eqnarray*}
 &&\|d\|_{H^{1}(0,T;H)}^{2}\\
 &=&\int\limits_{0}^{T}\|d(t,\cdot)\|_{H}^{2}+\|\dot{d}(t,\cdot)\|_{H}^{2}\, dt\\
 &=&\|d\|_{L^{2}(0,T;H)}^{2}+\|\dot{d}\|_{L^{2}(0,T;H)}^{2}.
\end{eqnarray*}
Using (\ref{L2H1}) and again Theorem \ref{glm-konv-ungl} yields
\[\|d\|_{L^{2}(0,T;H)}^{2}\leq\|d\|_{L^{2}(0,T;V)}^{2}\leq (1+C_{\Omega})\bar{L}_{2}^{2}\|h\|_{\infty}^{3}\leq(1+C_{\Omega})\bar{L}_{2}^{2}r^{3}<\infty.\]
In addition the proof of Theorem \ref{thm-glm-konv} in \cite{SEYDEL;SCHUSTER:16} delivers for $\tau\in[0,T]$ with
 \[a := 2(B_{1}+B_{3})\|h\|_{\infty}^{3},\]
 \[b := \frac{729\mu(\alpha)}{8\kappa(\alpha)}\eta M_{1}>0\]
 and
 \[k:= 2(B_{2}(h)+B_{4}\|h\|_{\infty}^{\frac{1}{2}})\|h\|_{\infty}^{\frac{3}{2}}\frac{1}{\sqrt{\kappa(\alpha)}}\]
 the estimate
 \begin{eqnarray*}
  \|\dot{d}(\tau,\cdot)\|_{L^{2}(\Omega,\RR^3)}
  \leq \exp\bigg(\frac{1}{2}b\tau\bigg)a^{\frac{1}{2}} + \frac{k}{b}\bigg(\exp\bigg(\frac{1}{2}b\tau\bigg)-1\bigg).
 \end{eqnarray*}
 In the same way as in the proof of Theorem \ref{thm-glm-konv} (see \cite{SEYDEL;SCHUSTER:16}) we get 
 \[\|\dot{d}\|_{L^{2}(0,T;H)}^{2}\leq \kappa(\alpha)\bar{L}_{2}^{2}\|h\|_{\infty}^{3}\leq\kappa(\alpha)\bar{L}_{2}^{2}r^{3}<\infty.\]
 Hence we obtain for the second summand
 \[\|d\|_{H^{1}(0,T;H)}<\infty\]
 and finally the assertion of the lemma.\hfill
\end{proof}
\\[1ex]

Now we can state the local tangential cone condition for our identification problem.\\

\begin{theorem}\label{kegelbed}
For $d:=u(\alpha)-u(\bar{\alpha})-\mathcal{T}'(\alpha)h$ with $h=\alpha-\bar{\alpha}\in\RR^N$ and a constant $L_{3}>0$ there is
\begin{eqnarray}\label{kegelUngl}
\|d\|_{L^{2}(0,T;U)\cap H^{1}(0,T;H)} \leq L_{3}\|u(\alpha)-u(\bar{\alpha})\|_{L^{2}(0,T;U)\cap H^{1}(0,T;H)}.\,
\end{eqnarray}
\end{theorem}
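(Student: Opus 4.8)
The plan is to write down the linear initial boundary value problem that $d$ solves and run a hyperbolic energy estimate on it, the decisive point being that, thanks to Corollary~\ref{M4}, the source turns out to be \emph{linear} (not quadratic) in $\|u(\alpha)-u(\bar{\alpha})\|_{U}$. Write $w:=u(\alpha)-u(\bar{\alpha})$ and $v:=\mathcal{T}'(\alpha)h$, so that $d=w-v$. Subtracting the two Cauchy equations (\ref{cauchy-hyper-comb}) for $u(\alpha)$ and $u(\bar{\alpha})$, subtracting the linearized equation (\ref{gateaux-diff}) for $v$, and Taylor-expanding $\nabla_{Y}C_{\alpha}(x,\cdot)$ about $Ju(\alpha)$, one gets, with homogeneous initial and boundary conditions,
\begin{equation*}
 \rho\ddot{d}-\nabla\cdot[\nabla_{Y}\nabla_{Y}C_{\alpha}(x,Ju(\alpha)):Jd]=\nabla\cdot[\mathcal{R}+\mathcal{S}],
\end{equation*}
i.e.\ $Bd=\nabla\cdot[\mathcal{R}+\mathcal{S}]$ in the notation (\ref{Bv}). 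Here $\mathcal{R}(t,x)$ is the second order Taylor remainder of $\nabla_{Y}C_{\alpha}(x,\cdot)$, so that $|\mathcal{R}(t,x)|\le c\,|Jw(t,x)|^{2}$ by the bound (\ref{beschr1}) on the third $Y$-derivatives of the $C_{K}$, and $\mathcal{S}:=\nabla_{Y}C_{h}(x,Ju(\bar{\alpha}))-\nabla_{Y}C_{h}(x,Ju(\alpha))$, so that $|\mathcal{S}(t,x)|\le\|h\|_{\infty}\big(\sum_{K}\mu_{K}^{[1]}\big)|Jw(t,x)|$ by the mean value theorem and (\ref{bed2}). By Lemma~\ref{dNorm} this $d$ lies in $L^{2}(0,T;U)\cap H^{1}(0,T;H)$, which is what makes the energy argument and the left-hand side of (\ref{kegelUngl}) meaningful.

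Next I would control the right-hand side. For $\mathcal{R}$, Corollary~\ref{M4} gives $\|\mathcal{R}(t,\cdot)\|_{L^{2}(\Omega)}^{2}\le c^{2}\int_{\Omega}|Jw(t,x)|^{4}\,dx\le c^{2}M_{4}^{2}\|w(t,\cdot)\|_{U}^{2}$, hence $\|\mathcal{R}(t,\cdot)\|_{L^{2}(\Omega)}\le cM_{4}\|w(t,\cdot)\|_{U}$, and $\|\mathcal{S}(t,\cdot)\|_{L^{2}(\Omega)}\le r\big(\sum_{K}\mu_{K}^{[1]}\big)\|w(t,\cdot)\|_{U}$ since $\|h\|_{\infty}\le r$. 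So $\|(\mathcal{R}+\mathcal{S})(t,\cdot)\|_{L^{2}(\Omega)}\le C_{F}\|w(t,\cdot)\|_{U}$ with $C_{F}$ depending only on $\alpha$, the structural constants, $M_{4}$ and $r$. The same bookkeeping, applied to $\partial_{t}(\mathcal{R}+\mathcal{S})$ and to $\nabla\cdot\mathcal{S}$, uses in addition the pointwise $L^{\infty}$-bounds on $Ju$, $\partial_{l}\partial_{j}u$ and $J\dot{u}$ from $\mathcal{A}$ (the constants $M_{1},M_{3},M_{4}$) and, for the genuinely second-order-in-$w$ pieces that carry the non-smoothing factors $J\dot{w}$ or $\partial_{l}\partial_{j}w$, the stability estimate of Theorem~\ref{theorem21imp} with coinciding data $(\bar{u}_{0},\bar{u}_{1},\bar{f})=(u_{0},u_{1},f)$, which bounds $\|w(t,\cdot)\|_{H^{2}(\Omega)}$, $\|J\dot{w}(t,\cdot)\|_{L^{2}(\Omega)}$ and $\|\ddot{w}(t,\cdot)\|_{L^{2}(\Omega)}$ each by $\mathrm{const}\cdot\|h\|_{\infty}$. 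The net outcome is $\|\partial_{t}(\mathcal{R}+\mathcal{S})(t,\cdot)\|_{L^{2}(\Omega)}+\|\nabla\cdot\mathcal{S}(t,\cdot)\|_{L^{2}(\Omega)}\le C_{F}'\|w(t,\cdot)\|_{U}+C_{F}''\|h\|_{\infty}^{2}$.

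Now the energy estimate: test the weak form of $Bd=\nabla\cdot[\mathcal{R}+\mathcal{S}]$ with $\dot{d}$ (rigorously on the Galerkin level underlying Theorem~\ref{gateaux-ex}, since $\dot d$ need not lie in $L^2(0,T;U)$), integrate by parts in $x$ using $d|_{\partial\Omega}=0$, exploit the symmetry and the uniform coercivity and boundedness of $\nabla_{Y}\nabla_{Y}C_{\alpha}(x,Ju(\alpha))$ from (\ref{bed2}) weighted by $\alpha$, move the time derivative off the source via $-\int_{\Omega}\langle\langle\mathcal{R}+\mathcal{S}\,|\,J\dot{d}\rangle\rangle\,dx=-\tfrac{d}{dt}\int_{\Omega}\langle\langle\mathcal{R}+\mathcal{S}\,|\,Jd\rangle\rangle\,dx+\int_{\Omega}\langle\langle\partial_{t}(\mathcal{R}+\mathcal{S})\,|\,Jd\rangle\rangle\,dx$ (and write the offending summand of $\mathcal S$ as $\int_{\Omega}\langle\nabla\cdot\mathcal{S},\dot{d}\rangle_{\RR^{3}}\,dx$), integrate in $t$ using the homogeneous data and $\mathcal{R}(0,\cdot)=\mathcal{S}(0,\cdot)=0$ (because $w(0,\cdot)=0$), absorb the $\|d\|_{U}$- and $\|\dot{d}\|_{H}$-factors by Young's inequality, estimate the contribution of $\partial_{t}[\nabla_{Y}\nabla_{Y}C_{\alpha}(x,Ju(\alpha))]$ by $\mathrm{const}(M_{1})\int_{0}^{\tau}\|d\|_{U}^{2}$, and apply Gronwall's lemma. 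This yields $\|\dot{d}\|_{L^{\infty}(0,T;H)}^{2}+\|d\|_{L^{\infty}(0,T;U)}^{2}\le C\big(\|w\|_{L^{2}(0,T;U)}^{2}+\|h\|_{\infty}^{4}\big)$, hence, with (\ref{poincare-1})--(\ref{L2H1}), $\|d\|_{L^{2}(0,T;U)\cap H^{1}(0,T;H)}\le C_{1}\|w\|_{L^{2}(0,T;U)}+C_{2}\|h\|_{\infty}^{2}$. To absorb the residual $C_{2}\|h\|_{\infty}^{2}$ into the right-hand side of (\ref{kegelUngl}) I would use $w=\mathcal{T}'(\alpha)h+d$ once more together with the finite-dimensionality of the parameter space: by Theorem~\ref{gateaux-stetig} the linear map $\mathcal{T}'(\alpha):\RR^{N}\to L^{2}(0,T;U)\cap H^{1}(0,T;H)$ is bounded, and assuming it is injective (equivalently, the functions $\nabla\cdot[\nabla_{Y}C_{K}(x,Ju(\alpha)(\cdot,\cdot))]$, $K=1,\dots,N$, are linearly independent, the natural non-degeneracy requirement on the dictionary), compactness of the unit sphere of $\RR^{N}$ gives $c_{1}:=\inf\{\|\mathcal{T}'(\alpha)h\|:\|h\|_{\infty}=1\}>0$; combining $\|\mathcal{T}'(\alpha)h\|\ge c_{1}\|h\|_{\infty}$ with $\|d\|\le\bar{L}_{2}\|h\|_{\infty}^{3/2}\le\bar{L}_{2}r^{1/2}\|h\|_{\infty}$ from Lemma~\ref{dNorm} and shrinking $r$ so that $\bar{L}_{2}r^{1/2}\le c_{1}/2$ gives $\|w\|\ge\|\mathcal{T}'(\alpha)h\|-\|d\|\ge\tfrac{c_{1}}{2}\|h\|_{\infty}$, so $\|h\|_{\infty}^{2}\le r\|h\|_{\infty}\le\tfrac{2r}{c_{1}}\|w\|$, whence (\ref{kegelUngl}) with $L_{3}:=C_{1}+2rC_{2}/c_{1}$.

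The two places I expect to cost effort are, first, making the energy estimate close with the source genuinely controlled by $\|w\|_{L^{2}(0,T;U)}$: this is exactly where Corollary~\ref{M4} (turning $\int|Jw|^{4}$ into $\|w\|_{U}^{2}$) and the full strength of Theorem~\ref{theorem21imp} (turning the ``extra'' derivatives $\|J\dot{w}\|_{L^{2}}$, $\|\partial_{l}\partial_{j}w\|_{L^{2}}$ of $w$ into $O(\|h\|_{\infty})$) are indispensable, and where the limited time regularity of $d$ forces a Galerkin approximation. Second, and more conceptually, securing the lower bound $\|u(\alpha)-u(\bar{\alpha})\|\gtrsim\|h\|_{\infty}$ that kills the residual: its heart is the injectivity of $\mathcal{T}'(\alpha)$, a non-degeneracy condition on the dictionary $\{C_{1},\dots,C_{N}\}$, and this is the genuinely delicate ingredient of the whole argument.
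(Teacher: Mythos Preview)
Your route---a direct energy estimate obtained by testing $Bd=\nabla\cdot[\mathcal{R}+\mathcal{S}]$ with $\dot d$---is genuinely different from the paper's, and it runs into a real obstacle. Because you have to move a derivative off the source (either $\partial_{t}$ or $\nabla\cdot$), terms like $\|h\|_{\infty}\|J\dot w\|_{L^{2}}$ and $\|h\|_{\infty}\|\partial_{l}\partial_{j}w\|_{L^{2}}$ appear; these are not controlled by $\|w\|_{L^{2}(0,T;U)\cap H^{1}(0,T;H)}$, so you fall back on Theorem~\ref{theorem21imp} to convert them into $O(\|h\|_{\infty}^{2})$ and are left with the residual $C_{2}\|h\|_{\infty}^{2}$. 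To absorb it you invoke injectivity of $\mathcal{T}'(\alpha)$, but that hypothesis is not part of the theorem and the paper does not use it, so as written you have not proved the stated result. There is a second, more practical cost: the explicit shape of $L_{3}$ is what drives the convergence Theorem~\ref{konvlw}, where one needs $L_{3}<\tfrac12$ from smallness of $\|\alpha^{(0)}\|_{\infty}$ and the ball radius $\rho$; your $L_{3}=C_{1}+2rC_{2}/c_{1}$ buries the $\alpha$-dependence inside Gronwall constants and the implicit lower bound $c_{1}$, which makes that step much harder to carry out.

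The paper sidesteps both problems by a duality argument in place of the energy estimate. After writing $Bd$ exactly as you do, it pairs $Bd$ against an arbitrary $z\in L^{2}(0,T;U)$, integrates once in space, and bounds the pairing by $\bar L_{3}\,\|w\|_{L^{2}(0,T;U)}\,\|z\|_{L^{2}(0,T;U)}$ using only Corollary~\ref{M4}, (\ref{bed2}) and (\ref{beschr1}); since no time derivative of the source is ever taken, $J\dot w$ and $\partial_{l}\partial_{j}w$ never appear and there is no $\|h\|_{\infty}^{2}$ leftover. It then specializes $z=(B^{-1})^{*}d$---this is precisely where Lemma~\ref{pinU} enters, to place $z$ in $L^{2}(0,T;U)$---and uses $\langle Bd,(B^{-1})^{*}d\rangle=\|d\|_{L^{2}(0,T;U)\cap H^{1}(0,T;H)}^{2}$ together with $\|(B^{-1})^{*}d\|_{L^{2}(0,T;U)}\le C\sqrt{1+C_{\Omega}}\,\|d\|_{L^{2}(0,T;U)\cap H^{1}(0,T;H)}$ (via Lemma~\ref{stetigeinb}) to conclude, with the explicit constant $L_{3}=C\sqrt{1+C_{\Omega}}\bigl(\tfrac{81}{2}M_{4}\|\alpha\|_{\infty}\sum_{K}\mu_{K}^{[2]}+9\|h\|_{\infty}\sum_{K}\mu_{K}^{[1]}\bigr)$. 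The missing idea in your plan is exactly this adjoint pairing; once you use it, the discussion of $\partial_{t}(\mathcal R+\mathcal S)$, $\nabla\cdot\mathcal S$, the Galerkin justification, and the injectivity of $\mathcal{T}'(\alpha)$ all become unnecessary.
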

\begin{remark}
 We note that the inequality (\ref{kegelUngl}) is strictly speaking only a tangential cone condition if the constant $L_{3}>0$ is bounded as $L_{3}<1/2$ (see \cite{Hanke1995}). 
 We prove this under certain constraints on $\alpha,\bar{\alpha}\in\RR^{N}$ in the proof of Theorem \ref{konvlw}.\\
\end{remark}

Before we can show Theorem \ref{kegelbed}, we have to prove the subsequent lemma.\\

\begin{lemma}\label{pinU}
Let be $d=u(\alpha)-u(\bar{\alpha})-\mathcal{T}'(\alpha)h\in {L^{2}(0,T;U)\cap H^{1}(0,T;H)}$. Then for the weak solution $p=(B^{-1})^{*}d$ of 
\begin{eqnarray}
&&\hspace*{-3mm}\label{bw-1} \rho\ddot{p}(t,x)-\nabla\cdot [\nabla_{Y}\nabla_{Y}C_{\alpha}(x,J u(t,x)):J p(t,x)]=d(t,x)\\[1ex]
&&\hspace*{-3mm}\label{bw-2} p(T,x)=\dot{p}(T,x)=0,\qquad x\in\Omega\\[1ex]
&&\hspace*{-3mm}\label{bw-3} p(t,\xi)=0,\qquad (t,\xi)\in[0,T]\times\partial\Omega 
\end{eqnarray}
it is
\[p=(B^{-1})^{*}d\in L^{2}(0,T;U).\]
\end{lemma}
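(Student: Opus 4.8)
The plan is to reduce the assertion to the well-posedness result Theorem~\ref{gateaux-ex}. The backward initial--boundary value problem (\ref{bw-1})--(\ref{bw-3}) is, up to the change of variables $t\mapsto T-t$ (which turns the terminal conditions (\ref{bw-2}) into homogeneous initial conditions, replaces the coefficient $\nabla_Y\nabla_Y C_\alpha(x,Ju(t,x))$ by $\nabla_Y\nabla_Y C_\alpha(x,Ju(T-s,x))$ with $u(T-\cdot,\cdot)$ still in $\mathcal{A}$, and maps $L^2(0,T;H)$ to itself), a linear hyperbolic problem of exactly the type (\ref{gateaux-diff})--(\ref{vRandwert}), the only difference being that the right-hand side $\nabla\cdot[\nabla_Y C_h(x,Ju)]$ is replaced by $d$. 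Since $U\hookrightarrow H$, Lemma~\ref{dNorm} in particular yields $d\in L^2(0,T;H)$. Hence, by the bijectivity of $B\colon\mathcal{X}\to L^2(0,T;H)$ (Theorem~\ref{gateaux-ex}), the backward problem (\ref{bw-1})--(\ref{bw-3}) has a unique solution $\tilde p\in\mathcal{X}\subset L^2(0,T;U)\cap H^1(0,T;H)$. It therefore suffices to show that this regular solution $\tilde p$ coincides with $p=(B^{-1})^*d$, which from the definition of $(B^{-1})^*$ is a priori only known to belong to $L^2(0,T;H)$.

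To establish this identification I would argue by duality. By Lemma~\ref{stetigeinb} the space $\mathcal{X}$ embeds continuously into $L^2(0,T;H)$, so $d\in L^2(0,T;H)$ acts on $\mathcal{X}$ via the $L^2(0,T;H)$ inner product, and this is the functional to which Theorem~\ref{Tadjungiert} is applied. Fix an arbitrary $g\in L^2(0,T;H)$ and set $v:=B^{-1}g\in\mathcal{X}$, i.e.\ $Bv=g$ with the homogeneous data (\ref{vAnfangswert}), (\ref{vRandwert}). On the one hand, by the definition of the adjoint,
\[
 \langle g,(B^{-1})^*d\rangle_{L^2(0,T;H)}=\langle B^{-1}g,d\rangle_{\mathcal{X},\mathcal{X}'}=\langle v,d\rangle_{L^2(0,T;H)}.
\]
On the other hand, $v$ and $\tilde p$ both lie in $L^2(0,T;U)\cap H^1(0,T;H)$, both vanish on $\partial\Omega$, $v$ has vanishing Cauchy data at $t=0$ whereas $\tilde p$ has vanishing Cauchy data at $t=T$, and the spatial part $-\nabla\cdot[\nabla_Y\nabla_Y C_\alpha(x,Ju)\colon J\,\cdot\,]$ is, pointwise in $t$, symmetric (since $\nabla_Y\nabla_Y C_\alpha$ is a Hessian); integrating by parts twice in $t$ and once in $x$ therefore makes all boundary, initial and terminal contributions drop out and, using that $\tilde p$ solves (\ref{bw-1}), gives
\[
 \langle Bv,\tilde p\rangle_{L^2(0,T;H)}=\big\langle v,\rho\ddot{\tilde p}-\nabla\cdot[\nabla_Y\nabla_Y C_\alpha(x,Ju)\colon J\tilde p]\big\rangle_{L^2(0,T;H)}=\langle v,d\rangle_{L^2(0,T;H)}.
\]
Since $Bv=g$, the two identities combine to $\langle g,(B^{-1})^*d-\tilde p\rangle_{L^2(0,T;H)}=0$ for every $g\in L^2(0,T;H)$, whence $p=(B^{-1})^*d=\tilde p\in L^2(0,T;U)$, which is the assertion.

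The main obstacle is exactly this a priori low regularity of $p$: from the definition of $(B^{-1})^*$ only $p\in L^2(0,T;H)$ is available, so $\dot p$ and $Jp$ are not yet meaningful objects and no energy identity can be written for $p$ directly; this is why the regular solution $\tilde p$ has to be produced first (which is legitimate precisely because $d\in L^2(0,T;H)$) and $p$ identified with it by the above duality computation. Once the identification is in place, an energy estimate applied to $\tilde p$ in fact yields the stronger conclusion $p\in L^\infty(0,T;U)$: testing (\ref{bw-1}) with $\dot{\tilde p}$ and integrating the elliptic term by parts leads to an energy that is coercive by (\ref{bed2}) together with $\alpha_K\ge0$, with constant $\kappa(\alpha)>0$ from (\ref{kappamuungl}); its only genuinely time-dependent contribution, $\tfrac12\int_\Omega\partial_t[\nabla_Y\nabla_Y C_\alpha(x,Ju)]\colon J\tilde p\colon J\tilde p\,dx$, is bounded by $c\,\|\tilde p\|_U^2$ via (\ref{beschr1}) and $\|\partial_l\dot u_k\|_{L^\infty((0,T)\times\Omega)}\le M_1$ from (\ref{bedA}); and integrating backward from $t=T$ with vanishing terminal energy, reversing time and invoking Gronwall's lemma with exponent $p=\tfrac12$ (and $a=0$, $\|d(T-\cdot)\|_H\in L^1(0,T)$) bounds the energy uniformly in $t$.
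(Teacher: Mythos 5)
Your argument is correct, but it reaches the conclusion by a genuinely different route than the paper. The paper's proof is a single direct energy estimate on $p$ itself: it multiplies (\ref{bw-1}) by $2\dot p$, integrates over $\Omega$ and in time, uses the coercivity (\ref{bed2}) together with the $M_1$-bound on $\partial_l\dot u_k$ to control the time derivative of the bilinear form $a_1(t;p,p)$, and then applies Gronwall's lemma with $a=0$, $k(t)=2\|d(t,\cdot)\|_H/\sqrt{\rho}$ and exponent $p=\tfrac12$; combining the result with $\|d\|_{L^2(0,T;H)}\le\|d\|_{L^2(0,T;V)}\le L_2\|h\|_\infty^{3/2}$ from (\ref{L2H1}) and Theorem \ref{thm-glm-konv} gives the explicit bound $\|p\|_{L^2(0,T;U)}^2\le \tfrac{TL_2^2}{b\kappa(\alpha)\rho}\|h\|_\infty^3(\exp(bT)-1)$ --- i.e.\ exactly the computation you relegate to your closing remark. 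Your main argument instead front-loads a well-posedness step (time reversal of (\ref{bw-1})--(\ref{bw-3}) to land in the setting of Theorem \ref{gateaux-ex} and the bijectivity of $B$ on $\mathcal{X}$) and then identifies the resulting regular solution with $(B^{-1})^*d$ by a duality computation. What your version buys is an honest treatment of a point the paper glosses over: one cannot legitimately test with $\dot p$ and write $\|Jp(\tau,\cdot)\|_{L^2}$ before knowing that $p$ has this regularity, and your construction of $\tilde p$ supplies exactly that (the duality identification itself is essentially a restatement of Theorem \ref{Tadjungiert}, so it is redundant but harmless). The cost is that you lean on surjectivity of $B:\mathcal{X}\to L^2(0,T;H)$ for \emph{arbitrary} $L^2$ right-hand sides, which the paper asserts but only justifies by pointing to Theorem \ref{gateaux-ex} (stated for the specific right-hand side $\nabla\cdot[\nabla_Y C_h(x,Ju)]$); so your existence step is not entirely free, and in any case the quantitative bound in terms of $\|h\|_\infty$ --- which is what the paper actually records at the end of its proof --- still requires the energy/Gronwall computation you sketch last. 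In short: same final estimate, but you supply the regularity scaffolding the paper omits, while the paper supplies the self-contained quantitative inequality your main argument does not.
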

\begin{proof}(Proof of Lemma \ref{pinU})
Multiplying equation (\ref{bw-1}) by $2\dot{p}$ and integrating over $\Omega$ yield
\[2\langle \rho\ddot{p}(t,\cdot),\dot{p}(t,\cdot)\rangle_{H} - 2\langle \nabla\cdot[\nabla_{Y}\nabla_{Y}C_{\alpha}(\cdot,J u(t,\cdot)):J p(t,\cdot)],\dot{p}(t,\cdot)\rangle_{H} = 2\langle d(t,\cdot),\dot{p}(t,\cdot)\rangle_{H}\]
for $t\in[0,T]$. Then we get
\[a_{1}(\tau;p,p) + \rho\|\dot{p}(\tau,\cdot)\|_{H}^{2} = \int\limits_{0}^{\tau}a_{1}'(t;p,p)\, dt + 2\int\limits_{0}^{\tau}\langle d(t,\cdot),\dot{p}(t,\cdot)\rangle_{H}\;dt\]
for $\tau\in[0,T]$ with 
\[a_{1}(\tau;v,w):=\sum_{K=1}^{N}\alpha_{K}\int\limits_{\Omega}\langle\langle\nabla_{Y}\nabla_{Y}C_{K}(x,Ju(t,x)):Jv(t,x),Jw(t,x)\rangle\rangle\, dx\]
for all $v,w\in L^{2}(0,T;U)$ and therefore (see \cite{SEYDEL;SCHUSTER:16}) 
\begin{eqnarray*}
&& \kappa(\alpha)\|J p(\tau,\cdot)\|_{L^{2}(\Omega,\RR^{3\times 3})}^{2}+\rho\|\dot{p}(\tau,\cdot)\|_{H}^{2} \\
&\leq& \frac{729}{8}\eta\mu(\alpha)M_{1}\int\limits_{0}^{\tau}\|J p(t,\cdot)\|_{L^{2}(\Omega,\RR^{3\times 3})}^{2}\; dt + 2\int\limits_{0}^{\tau}\|d(t,\cdot)\|_{H}\|\dot{p}(t,\cdot)\|_{H}\; dt\\
&\leq& \frac{729}{8}\eta\mu(\alpha)\frac{M_{1}}{\kappa(\alpha)}\int\limits_{0}^{\tau}(\kappa(\alpha)\|J p(t,\cdot)\|_{L^{2}(\Omega,\RR^{3\times 3})}^{2}+\rho\|\dot{p}(t,\cdot)\|_{H}^{2})\; dt \\
&&\;\;\;+ 2\int\limits_{0}^{\tau}\frac{1}{\sqrt{\rho}}\|d(t,\cdot)\|_{H}\{\kappa(\alpha)\|J p(t,\cdot)\|_{L^{2}(\Omega,\RR^{3\times 3})}^{2}+\rho\|\dot{p}(t,\cdot)\|_{H}^{2}\}^{\frac{1}{2}}\; dt.
\end{eqnarray*}
With Gronwall's lemma for $\psi(t)=\kappa(\alpha)\|J p(t,\cdot)\|_{L^{2}(\Omega,\RR^{3\times 3})}^{2}+\rho\|\dot{p}(t,\cdot)\|_{H}^{2}$, $a=0$, \\
$b(t)=b=729\eta\mu(\alpha)M_{1}/(8\kappa(\alpha))$, $k(t)=2\|d(t,\cdot)\|_{H}/\sqrt{\rho}$ and $p=1/2$ we get for $\tau\in[0,T]$: 
\begin{eqnarray*}
\psi(\tau) \leq \exp(b\tau)\bigg[\frac{1}{2}\int\limits_{0}^{\tau}k(t)\exp(-\frac{1}{2}bt)\, dt\bigg]^{2}
= \frac{1}{4}\exp(b\tau)\bigg[\int\limits_{0}^{\tau}\frac{2}{\sqrt{\tau}}\|d(t,\cdot)\|_{H}\exp(-\frac{1}{2}bt)\,dt\bigg]^{2}.
\end{eqnarray*}
This yields together with the H\"older equation
\begin{eqnarray*}
\psi(\tau) \leq \frac{1}{\rho}\exp(b\tau)\int\limits_{0}^{\tau}\|d(t,\cdot)\|_{H}^{2}\;dt\, \int\limits_{0}^{\tau}\exp(-bt)\,dt
\leq \frac{1}{b\rho}(\exp(b\tau)-1)\|d\|_{L^{2}(0,T;H)}^{2}.
\end{eqnarray*} 
Using (\ref{L2H1}) and Theorem \ref{thm-glm-konv} it follows 
\begin{eqnarray*}
 \psi(\tau) &\leq& \frac{1}{b\rho}(\exp(b\tau)-1)\|d\|_{L^{2}(0,T;H)}^{2} \leq \frac{1}{b\rho}(\exp(b\tau)-1)\|d\|_{L^{2}(0,T;V)}^{2} \\
 &\leq& \frac{L_{2}^{2}}{b\rho}\|h\|_{\infty}^{3}(\exp(b\tau)-1).
\end{eqnarray*}
Then we have 
\begin{eqnarray*}
 \|p(\tau,\cdot)\|_{U}^{2} = \|J p(\tau,\cdot)\|_{L^{2}(\Omega,\RR^{3\times 3})}^{2} \leq \frac{L_{2}^{2}}{b\kappa(\alpha)\rho}\|h\|_{\infty}^{3}(\exp(b\tau)-1)
\end{eqnarray*}
for $\tau\in[0,T]$. Using the fact, that
\begin{equation}\label{S-tau} S(\tau):=\exp\Big(\frac{729\mu(\alpha)}{8\kappa(\alpha)}\eta M_{1}\tau\Big)-1 \end{equation}
 is monotonically increasing for $\tau\in[0,T]$, and the mean value theorem, we derive 
\[\|p\|_{L^{2}(0,T;U)}\leq\frac{TL_{2}^{2}}{b\kappa(\alpha)\rho}\|h\|_{\infty}^{3}(\exp(bT)-1)<\infty\]
and so the assertion of the lemma.\hfill
\end{proof}
\\[1ex]

Now we want to show Theorem \ref{kegelbed}.
\begin{proof}
 Let be $u=u(\alpha)$ and $\bar{u}=u(\bar{\alpha})$ the solutions of the differential equations
 \[
  \rho\ddot{u} - \sum\limits_{K=1}^{N}\alpha_{K}\nabla\cdot\nabla_{Y}C_{K}(x,J u) = f \]
  respectively
 \[ \rho\ddot{\bar{u}} - \sum\limits_{K=1}^{N}\bar{\alpha}_{K}\nabla\cdot\nabla_{Y}C_{K}(x,J \bar{u}) = f.
 \]
 In addition there is after Section \ref{sec:setup} and accordingly (\ref{gateaux-diff}) for $v=\mathcal{T}'(\alpha)h$
 \[ Bv = \sum\limits_{K=1}^{N}h_{K}\nabla\cdot\nabla_{Y}C_{K}(x,J u).\]
 Using the definition and linearity of $B$ it follows for $d=u-\bar{u}-v$ 
 \begin{eqnarray*}
  Bd &=& Bu - B\bar{u} - Bv\\
  &=& Bu - B\bar{u} - \sum\limits_{K=1}^{N}\alpha_{K}\nabla\cdot\nabla_{Y}C_{K}(x,J u) + \sum\limits_{K=1}^{N}\bar{\alpha}_{K}\nabla\cdot\nabla_{Y}C_{K}(x,J u)\\
  &=& Bu - \sum\limits_{K=1}^{N}\alpha_{K}\nabla\cdot\nabla_{Y}C_{K}(x,J u) - B\bar{u}  + \sum\limits_{K=1}^{N}\bar{\alpha}_{K}\nabla\cdot\nabla_{Y}C_{K}(x,J \bar{u})\\
  &&\;\;\; + \sum\limits_{K=1}^{N}\bar{\alpha}_{K}\nabla\cdot[\nabla_{Y}C_{K}(x,J u)-\nabla_{Y}C_{K}(x,J \bar{u})]\\
  &=& \nabla\cdot[\nabla_{Y}\nabla_{Y}C_{\alpha}(x,J u):(J\bar{u}-J u)] + \sum\limits_{K=1}^{N}\bar{\alpha}_{K}\nabla\cdot[\nabla_{Y}C_{K}(x,J u)-\nabla_{Y}C_{K}(x,J \bar{u})].
 \end{eqnarray*}
 This yields with $Y_{r}:=rJ u + (1-r)J\bar{u}$ for $r\in[0,1]$ and 
 $Y_{rs}:=sJ u + (1-s)Y_{r}$ for $s\in[0,1]$ 
 \begin{eqnarray*}
  Bd &=& \sum\limits_{K=1}^{N}\alpha_{K}\nabla\cdot[\nabla_{Y}\nabla_{Y}C_{K}(x,J u):(J\bar{u}-J u)] \\
  && \;\;\; + \sum\limits_{K=1}^{N}\bar{\alpha}_{K}\nabla\cdot\bigg[\int\limits_{0}^{1}\nabla_{Y}\nabla_{Y}C_{K}(x,Y_{r}):(J u-J\bar{u})\; dr\bigg]\\
  &=& -  \sum\limits_{K=1}^{N}\alpha_{K}\nabla\cdot\big[\nabla_{Y}\nabla_{Y}C_{K}(x,J u):(J u-J\bar{u})\\
  &&\;\;\;\;\; -\int\limits_{0}^{1}\nabla_{Y}\nabla_{Y}C_{K}(x,Y_{r}):(J u-J\bar{u})\; dr\big]\\
  &&\;\;\; -\sum\limits_{K=1}^{N}h_{K}\nabla\cdot\bigg[\int\limits_{0}^{1}\nabla_{Y}\nabla_{Y}C_{K}(x,Y_{r}):(J u-J\bar{u})\; dr\bigg]\\
  &=& -\sum\limits_{K=1}^{N}\alpha_{K}\nabla\cdot\bigg[\int\limits_{0}^{1}\int\limits_{0}^{1}\nabla_{Y}\nabla_{Y}\nabla_{Y}C_{K}(x,Y_{rs}):(1-r)(J u-J\bar{u}):(J u-J\bar{u})\; dr\, ds\bigg]\\
  &&\;\;\;-\sum\limits_{K=1}^{N}h_{K}\nabla\cdot\bigg[\int\limits_{0}^{1}\nabla_{Y}\nabla_{Y}C_{K}(x,Y_{r}):(J u-J\bar{u})\; dr\bigg].
 \end{eqnarray*}
 Then we have 
 \begin{eqnarray}\label{BdGl}
  Bd &=& -\sum\limits_{K=1}^{N}\alpha_{K}\nabla\cdot\bigg[\int\limits_{0}^{1}\int\limits_{0}^{1}\nabla_{Y}\nabla_{Y}\nabla_{Y}C_{K}(x,Y_{rs}):(1-r)(J u-J\bar{u}):(J u-J\bar{u})\; dr\, ds\bigg] \nonumber \\
  &&\;\;\;-\sum\limits_{K=1}^{N}h_{K}\nabla\cdot\bigg[\int\limits_{0}^{1}\nabla_{Y}\nabla_{Y}C_{K}(x,Y_{r}):(J u-J\bar{u})\; dr\bigg].
 \end{eqnarray}
 With Lemma \ref{dNorm} it is $d=u-\bar{u}-\mathcal{T}'(\alpha)h\in L^{2}(0,T;U)\cap H^{1}(0,T;H)$. Now let be given an arbitrary $z\in L^{2}(0,T;U)$. Then we obtain with $\tilde{u}=u-\bar{u}$ from (\ref{BdGl}) 
 \begin{eqnarray*}
  &&\langle Bd,z\rangle_{L^{2}(0,T;H)}\\
  &\leq& \bigg|-\sum\limits_{K=1}^{N}\alpha_{K}\langle\nabla\cdot\bigg[\int\limits_{0}^{1}\int\limits_{0}^{1}\nabla_{Y}\nabla_{Y}\nabla_{Y}C_{K}(x,Y_{rs}):(1-r)(J\tilde{u}):(J\tilde{u})\; dr\, ds\bigg],z\rangle_{L^{2}(0,T;H)}\\
  &&\;\;\;-\sum\limits_{K=1}^{N}h_{K}\langle\nabla\cdot\bigg[\int\limits_{0}^{1}\nabla_{Y}\nabla_{Y}C_{K}(x,Y_{r}):(J\tilde{u})\; dr\bigg],z\rangle_{L^{2}(0,T;H)}\bigg|\\
 \end{eqnarray*}
 and with partial integration, the triangle inequality and $\alpha_{K}\geq 0$ for all $K=1,...,N$ 
 \begin{eqnarray*}
  &\leq& \sum\limits_{K=1}^{N}\alpha_{K}\bigg|\int\limits_{0}^{T}\int\limits_{\Omega}\int\limits_{0}^{1}\int\limits_{0}^{1}(\nabla_{Y}\nabla_{Y}\nabla_{Y}C_{K}(x,Y_{rs}):(1-r)J\tilde{u}:J\tilde{u}):J z\;dr\,ds\,dx\,dt\bigg|\\
  &&\;\;\; +\sum\limits_{K=1}^{N}|h_{K}|\bigg|\int\limits_{0}^{T}\int\limits_{\Omega}\int\limits_{0}^{1}(\nabla_{Y}\nabla_{Y}C_{K}(x,Y_{r}):(J\tilde{u})):J z\;dr\,dx\,dt\bigg|.
 \end{eqnarray*}
 Using (\ref{beschr1}), (\ref{bed2}), the H\"older inequality and Corollary \ref{M4} we can further estimate
 \begin{eqnarray*}
 &&\langle Bd,z\rangle_{L^{2}(0,T;H)}\\
  &\leq& \sum\limits_{K=1}^{N}\alpha_{K}\sum\limits_{i,j,k,l,p,q=1}^{3}\int\limits_{0}^{T}\int\limits_{\Omega}\int\limits_{0}^{1}\int\limits_{0}^{1}(1-r)|\partial_{Y_{ij}}\partial_{Y_{kl}}\partial_{Y_{pq}}C_{K}(x,Y_{rs})||\partial_{j}\tilde{u}_{i}|
  |\partial_{l}\tilde{u}_{k}||\partial_{q}z_{p}|\;dr\,ds\,dx\,dt \\
 &&\;\;\; +\sum\limits_{K=1}^{N}|h_{K}|\sum\limits_{i,j,k,l=1}^{3}\int\limits_{0}^{T}\int\limits_{\Omega}\int\limits_{0}^{1}|\partial_{Y_{ij}}\partial_{Y_{kl}}C_{K}(x,Y_{r})||\partial_{j}\tilde{u}_{i}||\partial_{l}z_{k}|\;dr\,dx\,dt\\
 &\leq& \frac{1}{2}\sum\limits_{K=1}^{N}\alpha_{K}\mu_{K}^{[2]}\int\limits_{0}^{T}\int\limits_{\Omega}\Big(\sum\limits_{i,j=1}^{3}|\partial_{j}\tilde{u}_{i}|\Big)^{2}\Big(\sum\limits_{p,q=1}^{3}|\partial_{q}z_{p}|\Big)\;dx\,dt\\
 &&\;\;\; +\sum\limits_{K=1}^{N}|h_{K}|\mu_{K}^{[1]}\int\limits_{0}^{T}\int\limits_{\Omega}\Big(\sum\limits_{i,j=1}^{3}|\partial_{j}\tilde{u}_{i}|\Big)\Big(\sum\limits_{k,l=1}^{3}|\partial_{l}z_{k}|\Big)\;dx\,dt\\
 &\leq& \frac{1}{2}\sum\limits_{K=1}^{N}\alpha_{K}\mu_{K}^{[2]}\int\limits_{0}^{T}\Big(\int\limits_{\Omega}\Big(\sum\limits_{i,j=1}^{3}|\partial_{j}\tilde{u}_{i}|\Big)^{4}\;dx\Big)^{\frac{1}{2}}\Big(\int\limits_{\Omega}\Big(\sum\limits_{p,q=1}^{3}|\partial_{q}z_{p}|\Big)^{2}\;dx\Big)^{\frac{1}{2}}\,dt\\
 &&\;\;\; +\sum\limits_{K=1}^{N}|h_{K}|\mu_{K}^{[1]}\int\limits_{0}^{T}\Big(\int\limits_{\Omega}\Big(\sum\limits_{i,j=1}^{3}|\partial_{j}\tilde{u}_{i}|\Big)^{2}\;dx\Big)^{\frac{1}{2}}\Big(\int\limits_{\Omega}\Big(\sum\limits_{k,l=1}^{3}|\partial_{l}z_{k}|\Big)^{2}\;dx\Big)^{\frac{1}{2}}\,dt\\
 &\leq& \Big(\frac{81}{2}M_{4}\|\alpha\|_{\infty}\sum\limits_{K=1}^{N}\mu_{K}^{[2]}+ 9\|h\|_{\infty}\sum\limits_{K=1}^{N}\mu_{K}^{[1]}\Big)\|\tilde{u}\|_{L^{2}(0,T;U)}\|z\|_{L^{2}(0,T;U)}.
 \end{eqnarray*}
 Then we have with 
 \begin{eqnarray}\label{L3}
  \bar{L}_{3}:=\frac{81}{2}M_{4}\|\alpha\|_{\infty}\sum\limits_{K=1}^{N}\mu_{K}^{[2]}+ 9\|h\|_{\infty}\sum\limits_{K=1}^{N}\mu_{K}^{[1]}>0
 \end{eqnarray}
 the inequality
 \begin{eqnarray}\label{BdzUngl}
 \langle Bd,z\rangle_{L^{2}(0,T;H)} \leq \bar{L}_{3}\|\tilde{u}\|_{L^{2}(0,T;U)}\|z\|_{L^{2}(0,T;U)}
 \end{eqnarray}
 for all $z\in L^{2}(0,T;U)$.\\ 
 Now let be $z=(B^{-1})^{*}d$. In Lemma \ref{pinU} we have proven that $z=(B^{-1})^{*}d\in L^{2}(0,T;U)$. Then it is
 \begin{eqnarray*}
 \langle Bd,z\rangle_{L^{2}(0,T;H)} &=& \int\limits_{0}^{T}\langle Bd(t,\cdot),(B^{-1})^{*}d(t,\cdot)\rangle\;dt = \langle B^{-1}Bd, d\rangle_{L^{2}(0,T;U)\cap H^{1}(0,T;H)} \\
  &=&\|d\|_{L^{2}(0,T;U)\cap H^{1}(0,T;H)}^{2}  
 \end{eqnarray*}
 and therefore
 \begin{eqnarray}\label{dGleich}
  \langle Bd,z\rangle_{L^{2}(0,T;H)} = \|d\|_{L^{2}(0,T;U)\cap H^{1}(0,T;H)}^{2}.
 \end{eqnarray}
 Furthermore we have with (\ref{poincare-1}) and following the same lines as in the proof of Lemma \ref{stetigeinb} (see \cite{SEYDEL;SCHUSTER:16})
 \begin{eqnarray*}
  \|z\|_{L^{2}(0,T;U)}^{2} &=& \int\limits_{0}^{T}\langle (B^{-1})^{*}d(t,\cdot),(B^{-1})^{*}d(t,\cdot)\rangle_{U}\;dt\\
  &\leq& \|d\|_{L^{2}(0,T;U)\cap H^{1}(0,T;H)}\|B^{-1}z\|_{L^{2}(0,T;U)\cap H^{1}(0,T;H)}\\
  &\leq& C\sqrt{1+C_{\Omega}}\|d\|_{L^{2}(0,T;U)\cap H^{1}(0,T;H)}\|z\|_{L^{2}(0,T;U)}
 \end{eqnarray*}
 and thus
 \begin{eqnarray}\label{zUngl}
  \|z\|_{L^{2}(0,T;U)} \leq C\sqrt{1+C_{\Omega}}\|d\|_{L^{2}(0,T;U)\cap H^{1}(0,T;H)}.
 \end{eqnarray}
 Finally we obtain from (\ref{BdzUngl}), (\ref{dGleich}) and (\ref{zUngl})
 \[\|d\|_{L^{2}(0,T;U)\cap H^{1}(0,T;H)}^{2} \leq \bar{L}_{3}C\sqrt{1+C_{\Omega}}\|\tilde{u}\|_{L^{2}(0,T;U)}\|d\|_{L^{2}(0,T;U)\cap H^{1}(0,T;H)}\]
 und thereby 
 \begin{eqnarray}\label{kegel}
  \|d\|_{L^{2}(0,T;U)\cap H^{1}(0,T;H)} \leq \bar{L}_{3}C\sqrt{1+C_{\Omega}}\|\tilde{u}\|_{L^{2}(0,T;U)\cap H^{1}(0,T;H)},
 \end{eqnarray}
 because $\tilde{u}=u(\alpha)-u(\bar{\alpha})=u(\alpha)-u(\bar{\alpha})-u'+u'=d+u'\in {L^{2}(0,T;U)\cap H^{1}(0,T;H)}$ follows directly from Lemma \ref{dNorm}, $u'=\mathcal{T}'(\alpha)h\in\mathcal{X}$ and Lemma \ref{stetigeinb}.
Setting $L_{3}:=\bar{L}_{3}C\sqrt{1+C_{\Omega}}>0$ this finally gives the assertion.\hfill
 \end{proof}
 \\[1ex]
 
 We prove now the main result of this section.\\
 
\begin{theorem}\label{konvlw}
  Let be $\mathcal{T}(\alpha)=\tilde{u}$ solvable in a ball $\mathcal{B}_{\rho}(\alpha^{(0)})\subset D(\mathcal{T})$ centered about the initial value $\alpha^{(0)}\in\mathbb{R}_{+}^{N}$ with 
  \begin{eqnarray*}
  \|\alpha^{(0)}\|_{\infty} < \frac{1}{81M_{4}\sum_{K=1}^{N}\mu_{K}^{[2]}}
  \end{eqnarray*}
  and radius
  \begin{eqnarray}\label{rhoabsch}
  0 < \rho < \frac{1-81C\sqrt{1+C_{\Omega}}M_{4}\|\alpha^{(0)}\|_{\infty} \sum_{K=1}^{N}\mu_{K}^{[2]}}{18C\sqrt{1+C_{\Omega}}\sum_{K=1}^{N}(9M_{4}\mu_{K}^{[2]}+4\mu_{K}^{[1]})}.
  \end{eqnarray}
  Then the attenuated Landweber iteration converges under the assumption 
  \begin{eqnarray}\label{omega}
  \omega\in\bigg(0,\frac{1}{L_{1}^{2}}\bigg)
  \end{eqnarray}
  with $L_{1}>0$ from Theorem \ref{gateaux-stetig} applied to input data $\tilde{u}\in L^{2}(0,T;U)\cap H^{1}(0,T;H)$ to a solution of $\mathcal{T}(\alpha)=\tilde{u}$. 
  If $\mathcal{N}(\mathcal{T}'(\alpha^{\dagger}))\subset\mathcal{N}(\mathcal{T}'(\alpha))$ for all $\alpha\in\mathcal{B}_{\rho}(\alpha^{\dagger})$, then $\alpha^{(k)}$ converges to the minimum-norm-solution $\alpha^{\dagger}$ as $k\rightarrow\infty$.\\
  \end{theorem}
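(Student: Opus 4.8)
The plan is to show that the estimate of Theorem~\ref{kegelbed} becomes, under the smallness hypotheses on $\alpha^{(0)}$ and $\rho$, a genuine tangential cone condition with constant strictly below $1/2$ on the whole ball $\mathcal{B}_\rho(\alpha^{(0)})$, and then to invoke the classical convergence theory for the nonlinear Landweber iteration (see \cite{Hanke1995}, \cite{kaltenbacher2008iterative}), whose only further structural requirement, the step-size restriction, is provided by (\ref{omega}).

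First I would make the cone constant explicit on the ball. For $\alpha,\bar\alpha\in\mathcal{B}_\rho(\alpha^{(0)})$, with $h=\alpha-\bar\alpha$, the triangle inequality gives $\|\alpha\|_\infty\le\|\alpha^{(0)}\|_\infty+\rho$ and $\|h\|_\infty\le2\rho$. Inserting these into the formula (\ref{L3}) for $\bar L_3$ and into $L_3=\bar L_3\,C\sqrt{1+C_\Omega}$ (the last line of the proof of Theorem~\ref{kegelbed}) bounds $L_3$ by an affine, increasing function of $\rho$,
\[
L_3\le C\sqrt{1+C_\Omega}\Big(\tfrac{81}{2}M_4\big(\|\alpha^{(0)}\|_\infty+\rho\big)\textstyle\sum_{K=1}^{N}\mu_K^{[2]}+18\,\rho\textstyle\sum_{K=1}^{N}\mu_K^{[1]}\Big),
\]
and a short rearrangement shows that requiring the right-hand side to be $<1/2$ is exactly the pair of conditions that $\|\alpha^{(0)}\|_\infty$ be small and that $\rho$ lie below the bound (\ref{rhoabsch}); thus Theorem~\ref{kegelbed} yields, for all $\alpha,\bar\alpha\in\mathcal{B}_\rho(\alpha^{(0)})$,
\[
\big\|\mathcal{T}(\alpha)-\mathcal{T}(\bar\alpha)-\mathcal{T}'(\alpha)(\alpha-\bar\alpha)\big\|\le L_3\,\big\|\mathcal{T}(\alpha)-\mathcal{T}(\bar\alpha)\big\|,\qquad L_3<\tfrac12 ,
\]
which is the tangential cone condition. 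I would then observe that Theorem~\ref{gateaux-stetig} gives $\|\mathcal{T}'(\alpha)\|\le L_1$ uniformly on $\mathcal{B}_\rho(\alpha^{(0)})$, so the choice (\ref{omega}), $\omega<1/L_1^2$, realizes $\omega\|\mathcal{T}'(\alpha)\|^2<1$ there; throughout, the iteration (\ref{Landweberalpha}), the adjoint $\mathcal{T}'(\alpha)^*$ of Theorem~\ref{Tadjungiert}, and the exact data $\tilde u=\mathcal{T}(\alpha^\dagger)$ are read in the Hilbert space $\mathcal{X}$, which by Lemma~\ref{stetigeinb} embeds continuously into $L^2(0,T;U)\cap H^1(0,T;H)$ and is a closed subspace of it, so the cone estimate just obtained is available in that setting.

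With both ingredients in hand the argument is the textbook one. For any solution $\alpha^*\in\mathcal{B}_\rho(\alpha^{(0)})$ of $\mathcal{T}(\alpha)=\tilde u$, expanding $\|\alpha^{(k+1)}-\alpha^*\|^2$, using the cone condition to bound $\langle\mathcal{T}(\alpha^{(k)})-\tilde u,\,\mathcal{T}'(\alpha^{(k)})(\alpha^{(k)}-\alpha^*)\rangle$ from below by $(1-L_3)\|\mathcal{T}(\alpha^{(k)})-\tilde u\|^2$, and estimating $\omega^2\|\mathcal{T}'(\alpha^{(k)})\|^2\le\omega$, gives the monotonicity estimate
\[
\|\alpha^{(k+1)}-\alpha^*\|^2\le\|\alpha^{(k)}-\alpha^*\|^2-\omega(1-2L_3)\,\|\mathcal{T}(\alpha^{(k)})-\tilde u\|^2 .
\]
Since $1-2L_3>0$, the iterates stay in $\mathcal{B}_\rho(\alpha^{(0)})$ — so the cone condition remains applicable at each step — and $\sum_{k\ge0}\|\mathcal{T}(\alpha^{(k)})-\tilde u\|^2<\infty$, whence $\|\mathcal{T}(\alpha^{(k)})-\tilde u\|\to0$. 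The usual Cauchy-sequence argument in $\RR^N$ (as in \cite[Thm.~2.3]{kaltenbacher2008iterative}) then gives $\alpha^{(k)}\to\alpha^*\in\mathcal{B}_\rho(\alpha^{(0)})$, and continuity of $\mathcal{T}$ (Theorem~\ref{thm-glm-konv}) yields $\mathcal{T}(\alpha^*)=\tilde u$. For the last assertion I would apply the monotonicity estimate with $\alpha^*=\alpha^\dagger$, which keeps $\alpha^{(k)}\in\mathcal{B}_\rho(\alpha^\dagger)$; there $\mathcal{N}(\mathcal{T}'(\alpha^\dagger))\subset\mathcal{N}(\mathcal{T}'(\alpha^{(k)}))$ by hypothesis, so each increment $\alpha^{(k+1)}-\alpha^{(k)}\in\overline{\mathcal{R}\big(\mathcal{T}'(\alpha^{(k)})^*\big)}\subset\mathcal{N}(\mathcal{T}'(\alpha^{(k)}))^\perp\subset\mathcal{N}(\mathcal{T}'(\alpha^\dagger))^\perp$, whence $\alpha^{(k)}-\alpha^{(0)}\perp\mathcal{N}(\mathcal{T}'(\alpha^\dagger))$ for all $k$; letting $k\to\infty$ gives $\alpha^*-\alpha^{(0)}\perp\mathcal{N}(\mathcal{T}'(\alpha^\dagger))$, which identifies $\alpha^*=\alpha^\dagger$ as the $\alpha^{(0)}$-minimum-norm solution.

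The step I expect to be the main obstacle is the quantitative one: $L_3$ grows with both $\|\alpha\|_\infty$ and $\|\alpha-\bar\alpha\|_\infty$, so it must be forced below $1/2$ simultaneously for \emph{every} admissible pair in the ball, and it is this requirement that dictates the precise smallness bounds on $\|\alpha^{(0)}\|_\infty$ and on $\rho$ in (\ref{rhoabsch}). A secondary, more routine point is the bookkeeping between the norm of $L^2(0,T;U)\cap H^1(0,T;H)$ used in Theorem~\ref{kegelbed} and the Hilbert space $\mathcal{X}$ in which the Landweber iteration and the adjoint of Theorem~\ref{Tadjungiert} are formulated, which is settled by Lemma~\ref{stetigeinb} and the closedness of $\mathcal{X}$ in $L^2(0,T;U)\cap H^1(0,T;H)$.
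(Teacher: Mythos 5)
Your overall strategy coincides with the paper's: bound the cone constant $L_{3}=\bar{L}_{3}\,C\sqrt{1+C_{\Omega}}$ from (\ref{L3}) by an affine, increasing function of $\rho$ on the relevant ball, force it below $1/2$ via the smallness assumptions on $\|\alpha^{(0)}\|_{\infty}$ and $\rho$, check $\sqrt{\omega}\,\|\mathcal{T}'(\alpha)\|\le\sqrt{\omega}\,L_{1}\le 1$ via Theorem \ref{gateaux-stetig}, and then run the standard Landweber convergence machinery. The paper simply cites Theorem 11.4 of \cite{engl1996regularization} and Theorem 2.4 of \cite{kaltenbacher2008iterative} at the point where you write the monotonicity and null-space arguments out explicitly; that is a difference of presentation, not of substance.

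There is, however, one quantitative slip that matters for the logic. You verify $L_{3}<1/2$ only for $\alpha,\bar{\alpha}\in\mathcal{B}_{\rho}(\alpha^{(0)})$, using $\|\alpha\|_{\infty}\le\|\alpha^{(0)}\|_{\infty}+\rho$ and $\|h\|_{\infty}\le 2\rho$, and you then claim both that the resulting inequality is exactly (\ref{rhoabsch}) and that the iterates remain in $\mathcal{B}_{\rho}(\alpha^{(0)})$. Neither is quite right. The monotonicity estimate only yields $\|\alpha^{(k)}-\alpha^{*}\|\le\|\alpha^{(0)}-\alpha^{*}\|\le\rho$, i.e. $\alpha^{(k)}\in\mathcal{B}_{\rho}(\alpha^{*})\subset\mathcal{B}_{2\rho}(\alpha^{(0)})$; the iterates may leave $\mathcal{B}_{\rho}(\alpha^{(0)})$, so for the induction to close the cone condition must be established on the doubled ball. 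That is what the paper does: it takes $\alpha,\tilde{\alpha}\in\mathcal{B}_{2\rho}(\alpha^{(0)})$, hence $\|\tilde{\alpha}\|_{\infty}\le\|\alpha^{(0)}\|_{\infty}+2\rho$ and $\|\tilde{\alpha}-\alpha\|_{\infty}\le 4\rho$, which is precisely why the denominator of (\ref{rhoabsch}) carries the factor $18$ rather than the $9$ that your computation produces; the admissible range for $\rho$ you derive is twice the stated one. Since (\ref{rhoabsch}) is the stricter bound, the gap is repairable: rerun your estimate on $\mathcal{B}_{2\rho}(\alpha^{(0)})$ and it lands exactly on (\ref{rhoabsch}). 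But as written, the containment claim and the identification with (\ref{rhoabsch}) are both off by this factor. The remaining ingredients (the bookkeeping between $\mathcal{X}$ and $L^{2}(0,T;U)\cap H^{1}(0,T;H)$ via Lemma \ref{stetigeinb}, and the null-space argument identifying the limit as the minimum-norm solution) match the paper's intent.
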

  \begin{proof}
   At first the operator $\mathcal{T}$ is Fr\'{e}chet-differentiable because of Theorem \ref{thm-glm-konv}. Furthermore there is $\|\mathcal{T}'(\alpha)\|\leq L_{1}$ for a $L_{1}>0$ due to Theorem \ref{gateaux-stetig} and hence the Fr\'{e}chet-derivative is bounded. 
   Thereby $\mathcal{T}$ is continuously Fr\'{e}chet-differentiable. In addition we deduce again from Theorem \ref{gateaux-stetig} that $\|\mathcal{T}'(\alpha)\|\leq L_{1}$ holds true for a constant $L_{1}>0$ and for all $\alpha\in D(\mathcal{T})$ and hence also for all $\alpha\in \mathcal{B}_{\rho}(\alpha^{(0)})\subset D(\mathcal{T})$. 
   For this reason we get with (\ref{omega}) the estimation $\sqrt{\omega}\|\mathcal{T}'(\alpha)\|\leq\sqrt{\omega}L_{1}\leq 1$ for all $\alpha\in \mathcal{B}_{\rho}(\alpha^{(0)})$. \\
   According to Theorem \ref{kegelbed} we estimate  
   \begin{eqnarray*}
  \|\mathcal{T}(\tilde{\alpha})-\mathcal{T}(\alpha)-\mathcal{T}'(\alpha)(\tilde{\alpha}-\alpha)\|_{L^{2}(0,T;U)\cap H^{1}(0,T;H)}\leq\eta_{KB}\|\mathcal{T}(\tilde{\alpha})-\mathcal{T}(\alpha)\|_{L^{2}(0,T;U)\cap H^{1}(0,T;H)}
  \end{eqnarray*}  
 for $\alpha,\tilde{\alpha}\in\mathcal{B}_{2\rho}(\alpha^{(0)})$ and $\eta_{KB}=\bar{L}_{3}C\sqrt{1+C_{\Omega}}$. 
Using (\ref{rhoabsch}) we obtain 
   \begin{eqnarray*}
   \eta_{KB} &=& \bar{L}_{3}C\sqrt{1+C_{\Omega}} \\
   &=& \frac{81}{2}M_{4}C\sqrt{1+C_{\Omega}}\|\tilde{\alpha}\|_{\infty}\sum\limits_{K=1}^{N}\mu_{K}^{[2]} + 9C\sqrt{1+C_{\Omega}}\|\tilde{\alpha}-\alpha\|_{\infty}\sum\limits_{K=1}^{N}\mu_{K}^{[1]}\\
   &\leq& \frac{81}{2}M_{4}C\sqrt{1+C_{\Omega}}\|\alpha^{(0)}\|_{\infty}\sum\limits_{K=1}^{N}\mu_{K}^{[2]} + 9C\sqrt{1+C_{\Omega}}\sum\limits_{K=1}^{N}\bigg(\frac{9}{2}M_{4}\mu_{K}^{[2]}2\rho+4\rho\mu_{K}^{[1]}\bigg)\\
   &=& \frac{81}{2}M_{4}C\sqrt{1+C_{\Omega}}\|\alpha^{(0)}\|_{\infty}\sum\limits_{K=1}^{N}\mu_{K}^{[2]} + 9C\sqrt{1+C_{\Omega}}\sum\limits_{K=1}^{N}\bigg(9M_{4}\mu_{K}^{[2]}+4\mu_{K}^{[1]}\bigg)\rho \\
   &<& \frac{81}{2}M_{4}C\sqrt{1+C_{\Omega}}\|\alpha^{(0)}\|_{\infty}\sum\limits_{K=1}^{N}\mu_{K}^{[2]} + \frac{1}{2}\bigg(1-81M_{4}C\sqrt{1+C_{\Omega}}\|\alpha^{(0)}\|_{\infty}\sum\limits_{K=1}^{N}\mu_{K}^{[2]}\bigg)\\
   &=& \frac{1}{2}.
   \end{eqnarray*}
   Hence, $\eta_{KB}<1/2$. For this reason all assumptions of Theorem 11.4 of \cite{engl1996regularization} and Theorem 2.4 of \cite{kaltenbacher2008iterative}
  are satisfied.\hfill
  \end{proof}
  \\[1ex]
  
  Using the last theorem we want to show the following convergence result for noisy data \\$u^{\delta}\in L^{2}(0,T;U)\cap H^{1}(0,T;H)$ with $\|u^{\delta}-\tilde{u}\|_{L^{2}(0,T;U)\cap H^{1}(0,T;H)}\leq \delta$ for one $\delta>0$.\\
  
  \begin{corollary}\label{konvlwgest}
  Given the assumptions of Theorem \ref{konvlw} and that the Landweber iteration applied to $u^{\delta}$ is stopped with $k_{*}=k(\delta,u^{\delta})$ according to the discrepancy principle 
  \begin{eqnarray}\label{diskrepanzalpha}
  \|u^{\delta}-\mathcal{T}(\alpha^{(k_{*}),\delta})\|_{L^{2}(0,T;U)\cap H^{1}(0,T;H)}\leq \tau\delta < \|u^{\delta}-\mathcal{T}(\alpha^{(k),\delta})\|_{L^{2}(0,T;U)\cap H^{1}(0,T;H)}
  \end{eqnarray}
   for all $0\leq k<k_{*}$ with a positive tolerance parameter $\tau$ satisfying
  \begin{eqnarray}\label{tau}
  \tau > 2\frac{1+\eta_{KB}}{1-2\eta_{KB}}>2,
  \end{eqnarray}
  then $\alpha^{(k_{*}),\delta}$ converges to a solution $\alpha\in\mathcal{B}_{\rho}(\alpha^{(0)})$ of $\mathcal{T}(\alpha)=\tilde{u}$ as $\delta\rightarrow 0$. \\[1ex]
  \end{corollary}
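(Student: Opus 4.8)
The plan is to reduce the statement to the classical convergence theory for the nonlinear Landweber iteration under the discrepancy principle, as developed in \cite{engl1996regularization} and \cite{kaltenbacher2008iterative}. The decisive observation is that all structural hypotheses required there have already been verified in the course of the proof of Theorem \ref{konvlw}: the forward operator $\mathcal{T}$ is continuously Fr\'echet differentiable on $\mathcal{B}_{2\rho}(\alpha^{(0)})\subset D(\mathcal{T})$ (Theorem \ref{thm-glm-konv}), the scaling condition $\sqrt{\omega}\,\|\mathcal{T}'(\alpha)\|\le\sqrt{\omega}\,L_{1}\le 1$ holds on $\mathcal{B}_{2\rho}(\alpha^{(0)})$ by (\ref{omega}) and Theorem \ref{gateaux-stetig}, the tangential cone condition is valid on $\mathcal{B}_{2\rho}(\alpha^{(0)})$ with constant $\eta_{KB}=\bar{L}_{3}C\sqrt{1+C_{\Omega}}<\frac{1}{2}$, and $\mathcal{T}(\alpha)=\tilde{u}$ is solvable in $\mathcal{B}_{\rho}(\alpha^{(0)})$. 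Thus essentially nothing new has to be proved about the operator; the corollary is a matter of invoking the abstract theorem correctly.

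With these facts in hand, and since the tolerance $\tau$ is chosen to satisfy (\ref{tau}) -- which is exactly the admissibility bound $\tau>2\frac{1+\eta_{KB}}{1-2\eta_{KB}}$ appearing in the classical analysis -- I would carry out the argument in three steps. First, an induction on $k$ showing that, for fixed $\delta>0$, every iterate $\alpha^{(k),\delta}$ with $k\le k_{*}$ stays in $\mathcal{B}_{\rho}(\alpha^{(0)})$ and that the errors $\|\alpha^{(k),\delta}-\alpha^{\dagger}\|_{\infty}$ are monotonically non-increasing; here the cone condition of Theorem \ref{kegelbed}, combined with the strict residual inequality in (\ref{diskrepanzalpha}) valid for $k<k_{*}$, is used to control the cross term $\langle \mathcal{T}(\alpha^{(k),\delta})-u^{\delta},\,\mathcal{T}'(\alpha^{(k),\delta})(\alpha^{(k),\delta}-\alpha^{\dagger})\rangle$ with the correct sign and size. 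Second, summing the resulting decrease estimates over $k$ shows that the residuals $\|u^{\delta}-\mathcal{T}(\alpha^{(k),\delta})\|$ are square-summable, hence eventually fall below $\tau\delta$, so that the stopping index $k_{*}=k(\delta,u^{\delta})$ defined by the discrepancy principle is finite. Third, the passage $\delta\to 0$: for $\delta=0$ Theorem \ref{konvlw} already yields convergence of $\alpha^{(k)}$ to a solution (to the minimum-norm solution $\alpha^{\dagger}$ under the nullspace inclusion $\mathcal{N}(\mathcal{T}'(\alpha^{\dagger}))\subset\mathcal{N}(\mathcal{T}'(\alpha))$), while for $\delta>0$ a standard Cauchy-sequence argument, identical to the one in the discrepancy-principle counterparts of \cite[Thm.~11.4]{engl1996regularization} and \cite[Thm.~2.4]{kaltenbacher2008iterative} (namely \cite[Thm.~11.5]{engl1996regularization} and the corresponding result in \cite{kaltenbacher2008iterative}), shows that $\alpha^{(k_{*}),\delta}$ converges as $\delta\to0$ to a solution of $\mathcal{T}(\alpha)=\tilde{u}$ in $\mathcal{B}_{\rho}(\alpha^{(0)})$, and to $\alpha^{\dagger}$ itself when that nullspace inclusion is assumed.

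Since all the qualitative assumptions are already in place, the main body of the work is bookkeeping and I expect no essential obstacle. The one point that will need care is that the data space $L^{2}(0,T;U)\cap H^{1}(0,T;H)$ carries the sum norm of Lemma \ref{dNorm} rather than an inner-product norm, whereas the cited theorems are phrased for Hilbert spaces. I would resolve this by running the above estimates in the Hilbert space $\mathcal{X}$ with norm $\|\cdot\|_{\mathcal{X}}=\|B\cdot\|_{L^{2}(0,T;H)}$, on which $\mathcal{T}$ actually takes its values and which embeds continuously into $L^{2}(0,T;U)\cap H^{1}(0,T;H)$ by Lemma \ref{stetigeinb}, so that the adjoint $\mathcal{T}'(\alpha)^{*}$ occurring in the iteration (\ref{Landweberalpha}) is precisely the operator of Theorem \ref{Tadjungiert}; with this identification the abstract convergence theorems apply verbatim and the corollary follows.
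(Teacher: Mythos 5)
Your proposal is correct and follows essentially the same route as the paper: the paper's proof consists of a single observation that all hypotheses of Theorem 11.4 of \cite{engl1996regularization} were verified in the proof of Theorem \ref{konvlw}, so that Theorem 11.5 there (the discrepancy-principle counterpart) applies directly. Your additional remark about running the estimates in the Hilbert space $\mathcal{X}$ rather than in $L^{2}(0,T;U)\cap H^{1}(0,T;H)$ with its sum norm is a point of care the paper's one-line proof does not address, but it does not change the argument.
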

  \begin{proof}
  In the proof of Theorem \ref{konvlw} we showed that all assumptions of Theorem 11.4 of \cite{engl1996regularization} and hence of Theorem 11.5 of \cite{engl1996regularization} are fullfilled. This yields the assertion.\hfill
  \end{proof}
  
\section{Numerical results}\label{sec:results}
To verify the ability of the method to localize damages we performed some test computations. We assume that all entries of the coefficient matrix of the undamaged plate are equal to $1$. We simulate a wave propagating through a plate including a damage which is modeled by entries of the coefficient matrix $\alpha$ different from $1$. In this way we generate the displacement field $u_{S}$. 
The input to our algorithm was then the output of the observation operator applied to $u_{S}$
\[\tilde{u}=\mathcal{Q}u_{S}.\]
We consider a plate of Neo-Hookean material with measures $6.7\mbox{mm}\times 1\mbox{m}\times 1\mbox{m}$. The plate is discretized by $4\times 30\times 30$ trilinear elements. Furthermore the simulation time interval is $[0,T]$ with $T=133\mu\mbox{s}$ subdivided equidistantly by 16 time points. 
For the numerical consideration we assume that we have a plate with the measures $[-0.1,0.1]\times[-15,15]^{2}$ with 4 cells in $x_{1}$-direction and 30 cells in $x_{2}$- and $x_{3}$-direction, such that there is an equidistant decomposition $-15=x_{l}^{(0)}<...<x_{l}^{(30)}=15$, $l=2,3$ of $[-15,15]$. In addition we use in the algorithm a time intervall $[0,4]$ again subdivided equidistantly by 16 time points.
As a next step we want to specify the material model. We use for the stored energy function $C(x,Y)$ a conic combination 
\[C(x,Y)=\sum\limits_{K=1}^{N}\alpha_{K}C_{K}(x,Y),\] 
$\alpha_{K}\geq 0$ for all $K=1,...,N$, of tensor products 
\[C_{K}(x,Y)=v_{K}(x)C(Y)\]
for all $K=1,...,N$. As noted before we consider a Neo-Hookean material, such that we have 
\begin{eqnarray}\label{NeoHook}
 C(Y) = c_{1}(I_{1}-3) + \frac{c_{1}}{\beta}(D^{-2\beta} -1).
\end{eqnarray}
with $I_{1}=\|F\|_{F}^{2}$ and $D=\det(F)$ for $F=Y+I=Ju+I$ as well as $\beta=\frac{3K-2\mu}{6\mu}>0$ and $c_{1}=\frac{\mu}{2}>0$. Therefore we set $K=68.6\mbox{GPa}$ and $\mu=26.32\mbox{GPa}$ as in the article \cite{RauLa2015}. For this function we compute 
\[\nabla_{Y}C(Y) = 2c_{1}Y - 2c_{1}D^{-2\beta}Y^{-\top}\]
and
\begin{eqnarray*}
                            \nabla_{Y}\nabla_{Y}C(Y):H = 2c_{1}H + 4c_{1}\beta D^{-2\beta}(Y^{-\top}\otimes Y^{-\top}):H + 2c_{1}D^{-2\beta}Y^{-\top}H^{\top}Y^{-\top} 
                           \end{eqnarray*}
with an arbitrary tensor $H$ of second order. For the functions $v_{K}(x)$, $K=1,...,N$, we will appropriate B-Splines. Due to the assumption that if there is a defect, then it occurs at the boundary of the plate as for example in the case of delaminations, we consider in $x_{1}$-direction two layers $I$ and $II$ near the boundary of the plate. 
For the numerical treatments we choose $I$ at $x_{1}=-0.05$ and $II$ at $x_{2}=0.05$. Corresponding to the assumption above we have only undamaged material and set $C_{K}(x,Y)=C(Y)$ for all $K=1,...,N$ between the two layers. At the two layers it is either $x_{1}=-0.05$ or $x_{1}=0.05$ and so there the stored energy function depends only on $x_{2}$ and $x_{3}$ relating to the space. 
Then we choose for the stored energy function 
\begin{eqnarray}\label{konKomb}
  C(x,Y) = \sum\limits_{i=0}^{30}\sum\limits_{j=0}^{30}\alpha_{ij}b_{i}(x_{2})b_{j}(x_{3})C(Y)
 \end{eqnarray}
 with $\alpha_{ij}\geq 0$ for all $i,j=0,...,30$ and $b_{i}(x_{2})$ and $b_{j}(x_{3})$ are B-Splines of first order. Finally, it can be proven that if we consider the B-Splines at the nodes $x_{l}^{(k)}$, $k=0,...,30$, $l=2,3$, of the equidistant decomposition of $[-15,15]$, then we have 
 \[b_{i}(x_{2}^{(p)})=\delta_{ip}\;\mbox{ and }\; b_{j}(x_{2}^{(q)})=\delta_{jq}\]
 for all $i,j,p,q=0,...,30$. That yields to 
 \[C(x_{2}^{(p)},x_{3}^{(q)},Y) = \sum\limits_{i=0}^{n}\sum\limits_{j=0}^{n}\alpha_{ij}\delta_{ip}\delta_{jq}C(Y)\]
 and the corresponding derivatives for all $p,q=0,...,30$ at the two layers.\\
In every simulation the wave is excited at the beginning of the simulation time at one point, at the center of the plate. The excitation signal was chosen as 
\begin{eqnarray}\label{Kraft}
   f(x_{1}, x_{2}, x_{3}, t) = f_{t}(t)f_{2}(x_{2})f_{3}(x_{3})\begin{pmatrix}
                                                                0\\
                                                                0\\
                                                                1
                                                               \end{pmatrix}
  \end{eqnarray}
with $f_{t}$, $f_{2}$ and $f_{3}$ defined as in figure \ref{fig:Anregung}. 
\begin{small}
   \begin{figure}[ht]
  \centering
  \includegraphics[width=0.6\textwidth]{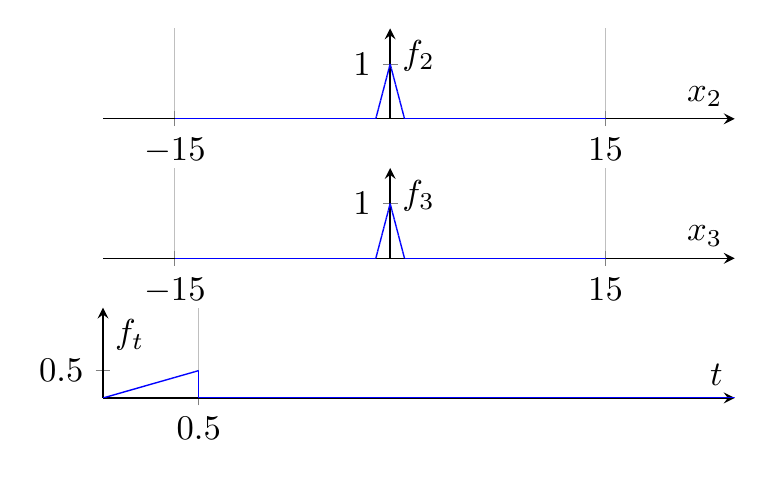}
  \caption{Plot of the factor functions of the excitation signal}
  \label{fig:Anregung}
  \end{figure}
  \end{small}%
Thus the excitation signal acts in $x_{3}$-direction. We have taken this approach for the excitation signal from \cite{BINDER;SCHUSTER:15}. The only difference is that their the wave is excited at four different locations. 
The damaged areas are of cylindrical shape with the axis in thickness direction $x_{1}$ and a square as base with side length equal to one. There were three scenarios 
\begin{itemize}
  \item One damaged area approximately in the center of the plate at $(-1.5,1.5)$ (Scenario A)
  \item Two damaged areas, one of them nearer to to center of the plate at $(5.5,5.5)$, the other nearer to one edge of the plate at $(-1.5,-10.5)$ (Scenario B)
  \item Two damaged areas, both relatively close to the center of the plate at $(-1.5,-4.5)$ and $(5.5,5.5)$ (Scenario C)
 \end{itemize}
 We also stated the center of the square representing the respective damage area. The three damage scenarios are plotted in figure \ref{fig:schadensszenarien}.
 \begin{small}
 \begin{figure}[ht]
\centering
\begin{minipage}[t]{.3\linewidth}
    \centering
\begin{tikzpicture}[every node/.style={color=black, font=\sffamily\Huge},scale=0.8]
 \draw[draw=black] (2.5,-2.5) -- (2.5,2.5) -- (-2.5,2.5) -- (-2.5,-2.5) -- cycle;
 \node[scale=6,text opacity=0.2] at (0.0,0.0) {A};
 \draw[draw=black] (-0.33,0.33) -- (-0.166,0.33) -- (-0.166,0.166) -- (-0.33,0.166)  -- cycle;
\end{tikzpicture}
\end{minipage}
\hfill
\begin{minipage}[t]{.3\linewidth}
    \centering
    \begin{tikzpicture}[every node/.style={color=black, font=\sffamily\Huge},scale=0.8]
     \draw[draw=black] (2.5,-2.5) -- (2.5,2.5) -- (-2.5,2.5) -- (-2.5,-2.5) -- cycle;
     \node[scale=6,text opacity=0.2] at (0.0,0.0) {B};
     \draw[draw=black] (0.833,-0.833) -- (1,-0.833) -- (1,-1) -- (0.833,-1)  -- cycle;
     \draw[draw=black] (-0.33,1.833) -- (-0.166,1.833) -- (-0.166,1.666) -- (-0.33,1.666)  -- cycle;
    \end{tikzpicture}
\end{minipage}
\hfill
\begin{minipage}[t]{.3\linewidth}
    \centering
    \begin{tikzpicture}[every node/.style={color=black, font=\sffamily\Huge},scale=0.8]
     \draw[draw=black] (2.5,-2.5) -- (2.5,2.5) -- (-2.5,2.5) -- (-2.5,-2.5) -- cycle;
     \node[scale=6,text opacity=0.2] at (0.0,0.0) {C};
     \draw[draw=black] (0.833,-0.833) -- (1,-0.833) -- (1,-1) -- (0.833,-1)  -- cycle;
     \draw[draw=black] (-0.33,0.833) -- (-0.166,0.833) -- (-0.166,0.666) -- (-0.33,0.666)  -- cycle;
    \end{tikzpicture}
\end{minipage}
\caption[The three damage scenarios]{The three damage scenarios. The black squares indicate the damage areas.}
\label{fig:schadensszenarien}
\end{figure}
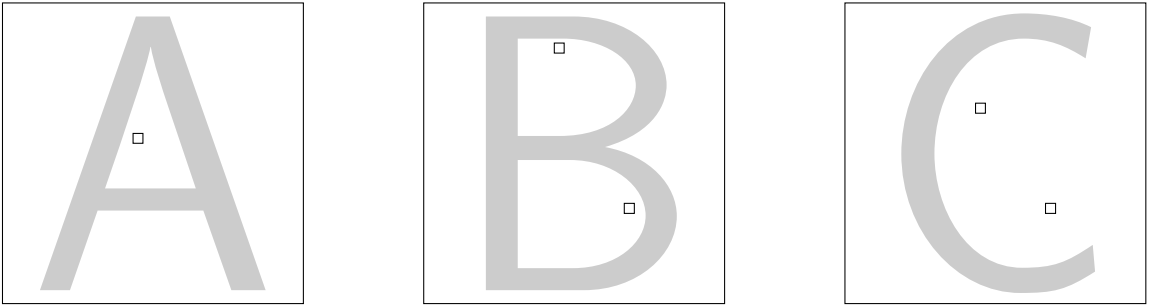
\end{small}
Let be $(\varphi_{r})_{r=1,...,L}$ the Lagrangian basis of the Galerkin space of the chosen Finite Element discretization. Then every basis function is at exactly one node equal to $1$ and at all other ones $0$. According to \cite{BINDER;SCHUSTER:15} we choose a basis element confined to the boundary of the plate as 
weighting function for every sensor of the observation operator. It follows that the observation matrix $W$ has the structure of a diagonal matrix multiplied with the boundary matrix $M_{\partial\Omega}$, where
$(M_{\partial\Omega})_{rs} = \langle\varphi_{r},\varphi_{s}\rangle_{L^{2}(\partial\Omega,\mathbb{R}^{3})}$ with $r,s=1,...,L$ if $(\varphi_{r})_{r=1,...,L}$ is the Lagrangian basis. In this way for the description of the observation operator it is sufficient to indicate the nodes, which are associated with a basis element that is at the same time 
a weighting function. \\
The relaxation parameter $\omega$ was in all examples set to $10.0$. The value of the displacement $u_{S}$ is estimated experimentally by solving the problem for a given coefficient matrix $\alpha\in\mathbb{R}^{31\times 31}$. To interpret the results we plot the coefficient matrix, where several colors represent different values of the single entries of the 
coefficient matrix.\\
For our experiments we assumed initially to have complete data. In a first experiment we have computed 50 Iterations of the attenuated Landweber method for all three damage scenarios. The results are presented in figure \ref{fig:alledrei}.
\begin{figure}[ht]
\begin{minipage}[t]{.33\linewidth}
    \centering
    \includegraphics[width=1.0\linewidth]{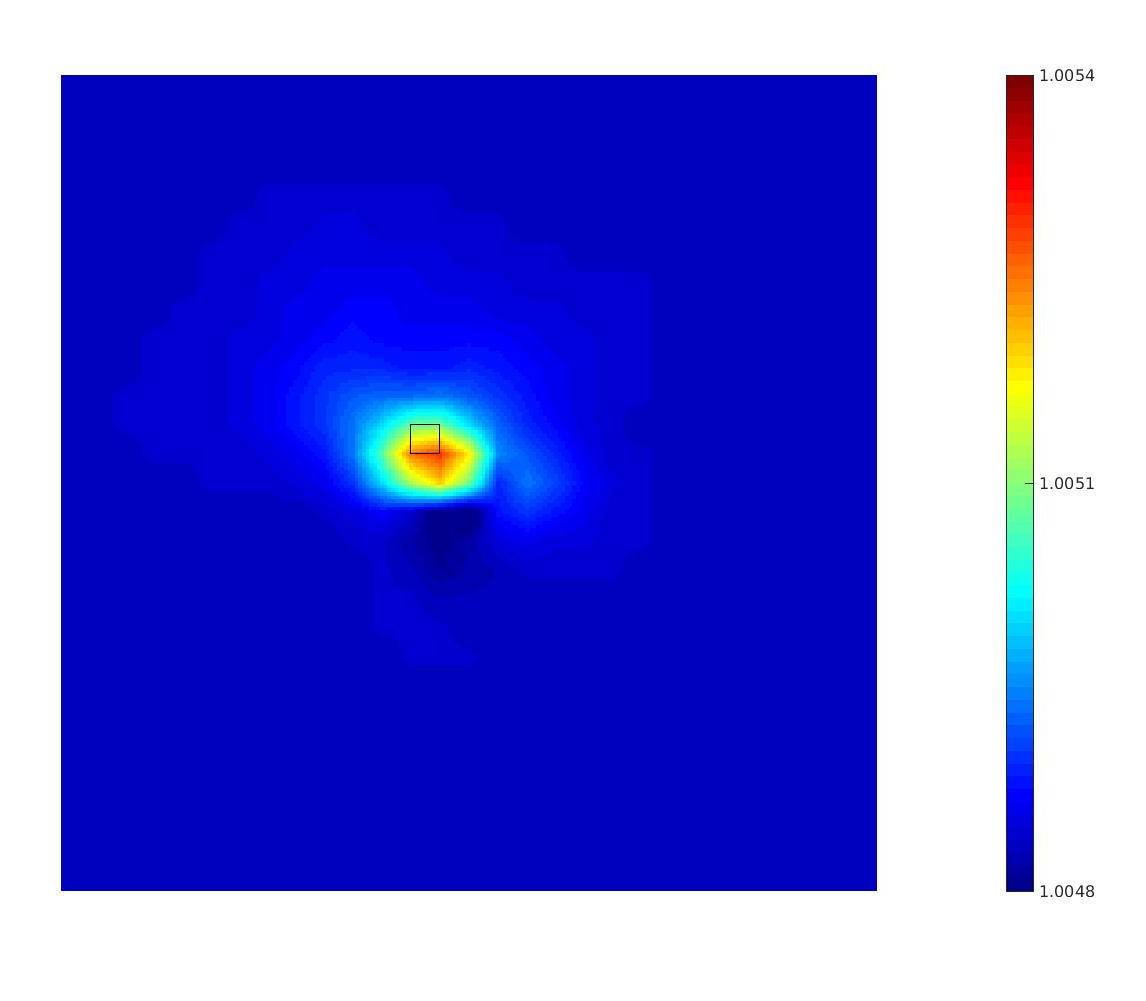}
\end{minipage}
\hspace{0.1cm}
\begin{minipage}[t]{.33\linewidth}
    \centering
    \includegraphics[width=1.0\linewidth]{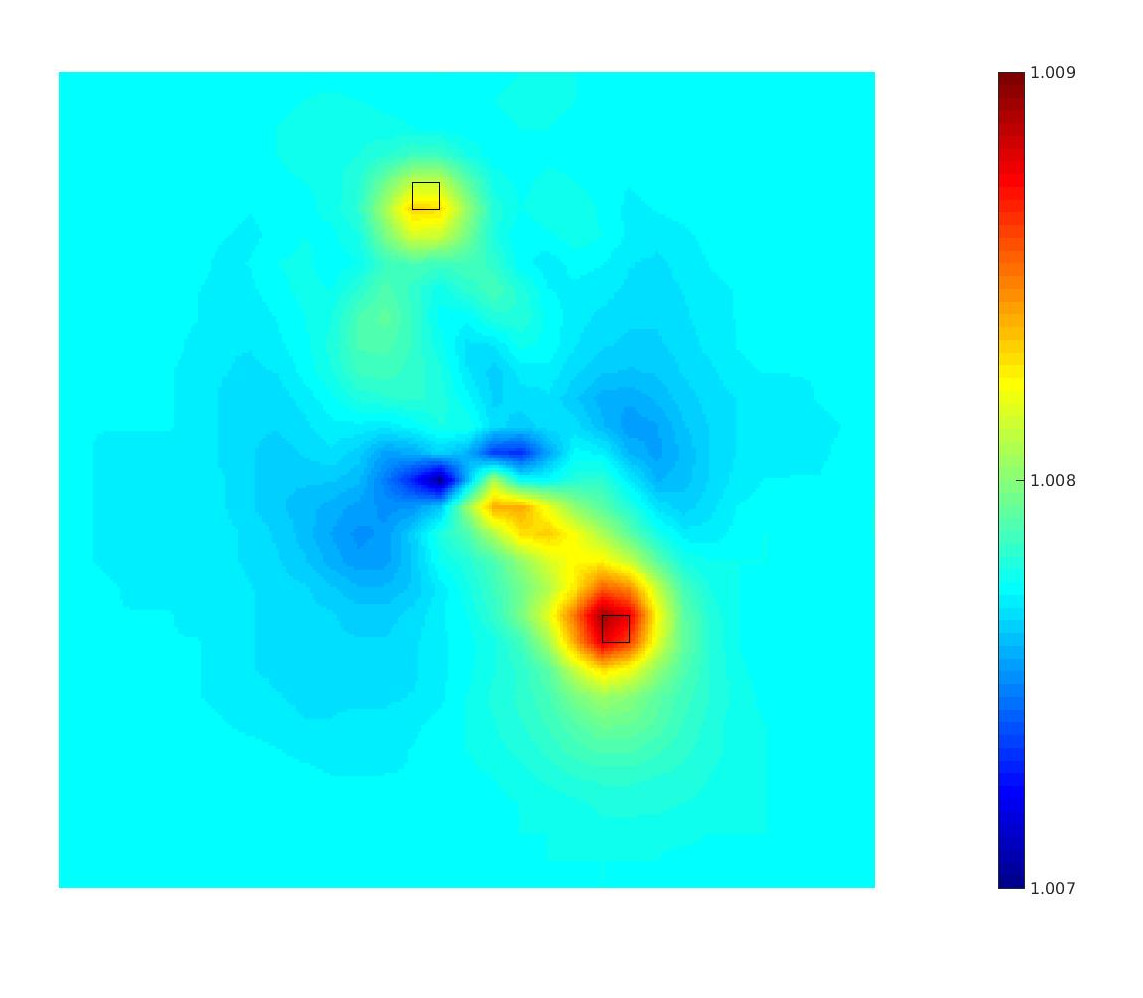}
\end{minipage}
\hspace{0.1cm}
\begin{minipage}[t]{.33\linewidth}
    \centering
    \includegraphics[width=1.0\linewidth]{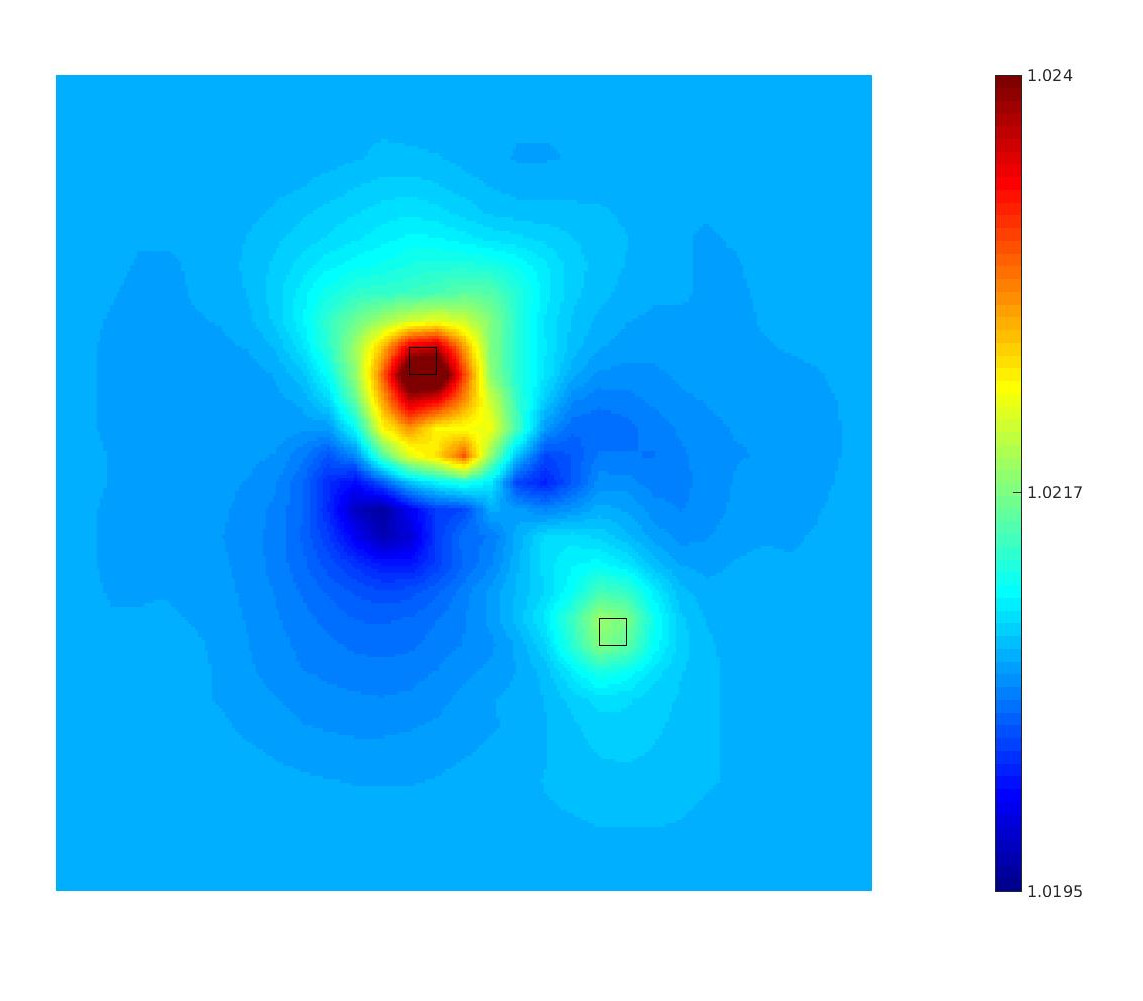}
\end{minipage}
\caption[The results of experiment 1]{The results of experiment 1 with damage scenarios A, B and C from left to right.}
\label{fig:alledrei}
\end{figure}
We see that our method can detect and localize the given damages. Regarding the damage scenarios B and C and therefore in case of two damaged areas it can be seen that there are artifacts between the center of the plate and the damaged area nearer to the center. One might suspect that the artifacts arise due to reflecions from the first damaged area meeting by the wave. 
To emphasize that we consider two damage scenarios with two damage areas. \\
In a second experiment we want to see how the results of the algorithm change in case of noisy data. For that purpose we add a random vector to the input data $\tilde{u}$ to get the noisy data $u^{\delta}$ with 
\[\frac{\|U^{\delta}-\tilde{U}\|_{L^{2}(0,T;\RR^L)}}{\|\tilde{U}\|_{L^{2}(0,T;\RR^L)}}=\delta.\]
Here we use the representations $u^{\delta}(t_{j})=\sum_{r=1}^{L}U^{\delta}_{r}(t_{j})\varphi_{r}$ and $\tilde{u}(t_{j})=\sum_{r=1}^{L}\tilde{U}^{\delta}_{r}(t_{j})\varphi_{r}$ at the time points $t_{j}=jT/m$ for all $j=0,...,m$ (see Section \ref{sec:scheme}). For our calculations we use different values up to 1 for $\delta$. 
In figure \ref{fig:rauschen} one can see the difference 
between $\tilde{u}$ and $u^{\delta}$ for $\delta=1$ at time $t=108\mu\mbox{s}$.
\begin{figure}[ht]
\centering
\begin{minipage}[t]{.27\linewidth}
    \centering
    \includegraphics[width=1.0\linewidth]{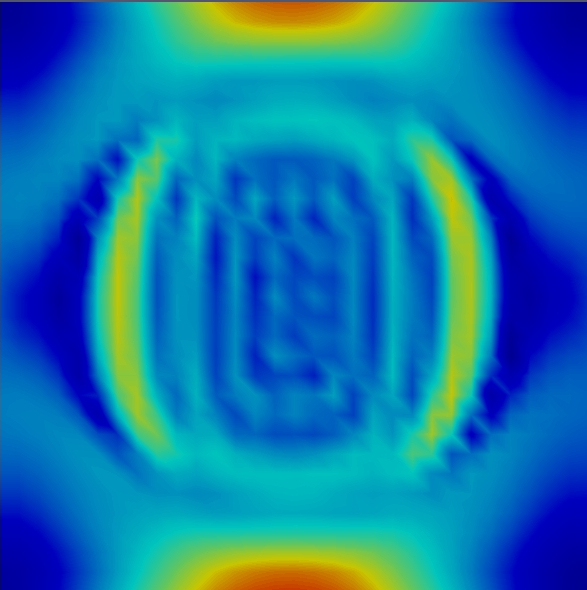}
\end{minipage}
\hspace{0.8cm}
\begin{minipage}[t]{.27\linewidth}
    \centering
    \includegraphics[width=1.0\linewidth]{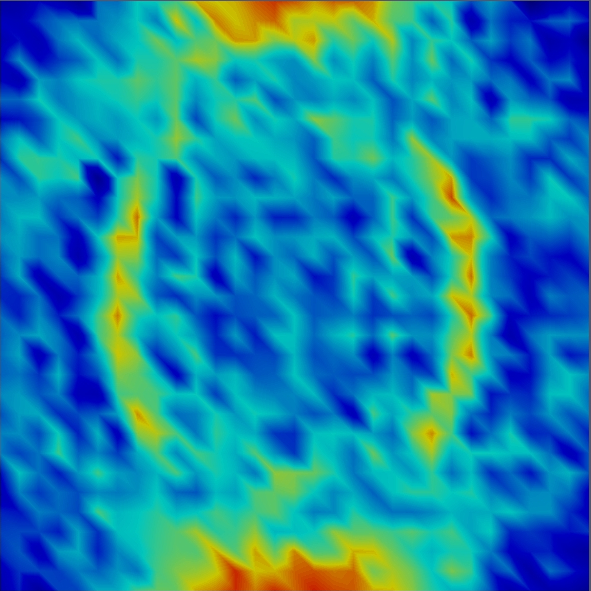}
\end{minipage}
\caption{The difference between exact and noisy input data with $\delta=1$. The displacement field corresponds to damage scenario A taken at time $t=108\mu\mbox{s}$. The coloring corresponds to the norm of the displacement field.}
\label{fig:rauschen}
\end{figure}
The result of experiment 2 is given in Figure \ref{fig:exp2} and proves the stability of the algorithm with respect to noise. \\
\begin{figure}[ht]
\centering
\begin{minipage}[t]{.4\linewidth}
    \centering
    \includegraphics[width=1.0\linewidth]{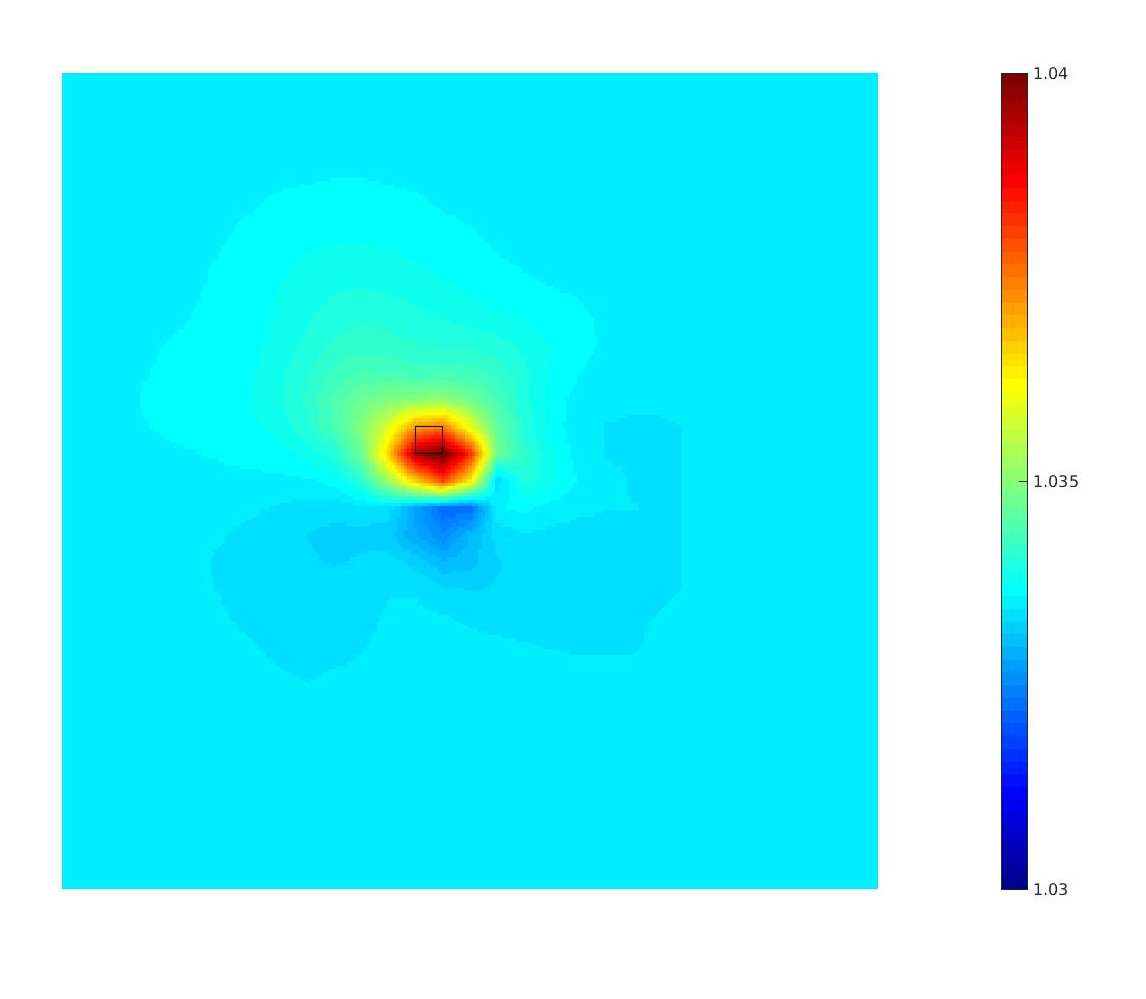}
\end{minipage}
\hspace{0.8cm}
\begin{minipage}[t]{.4\linewidth}
    \centering
    \includegraphics[width=1.0\linewidth]{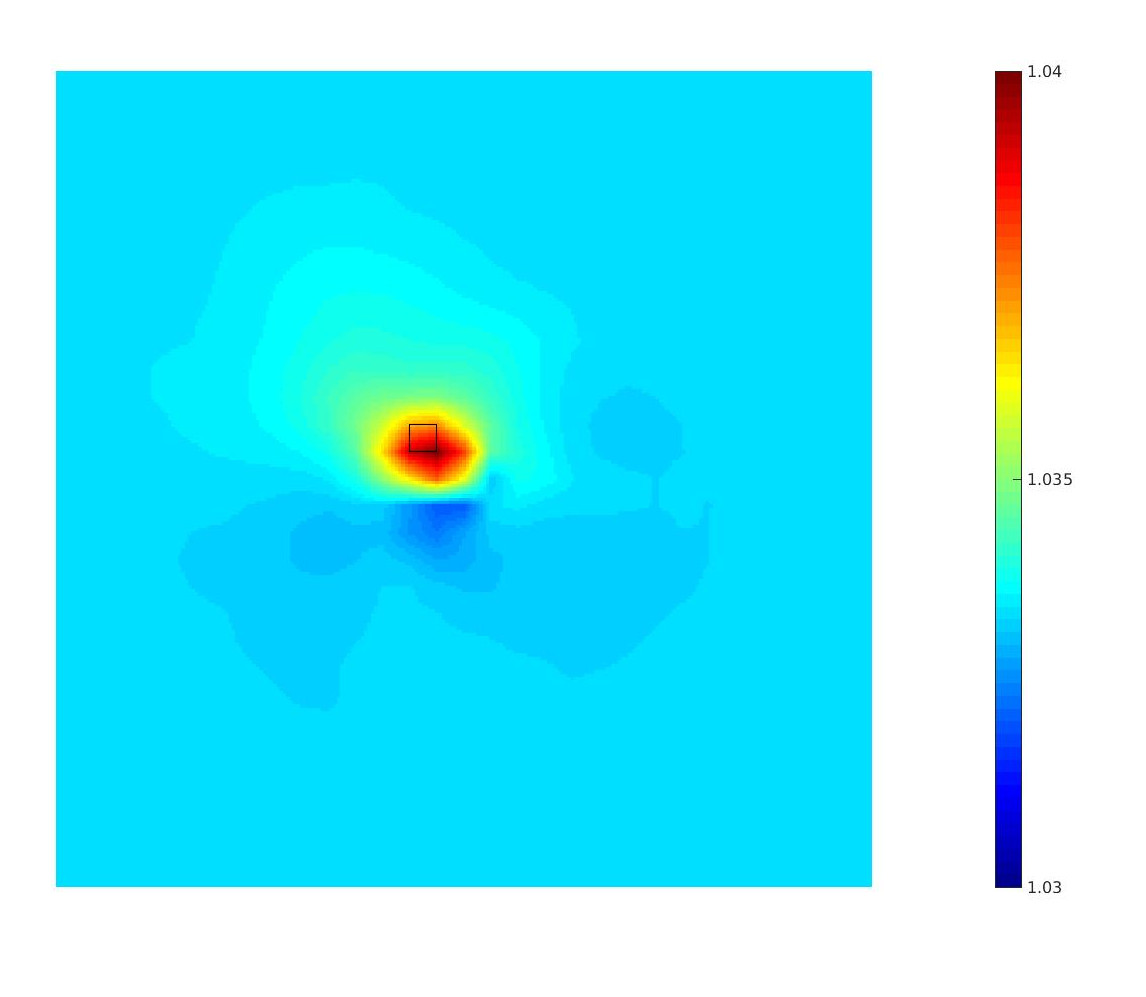}
\end{minipage}
\hspace{0.8cm}
\begin{minipage}[t]{.4\linewidth}
    \centering
    \includegraphics[width=1.0\linewidth]{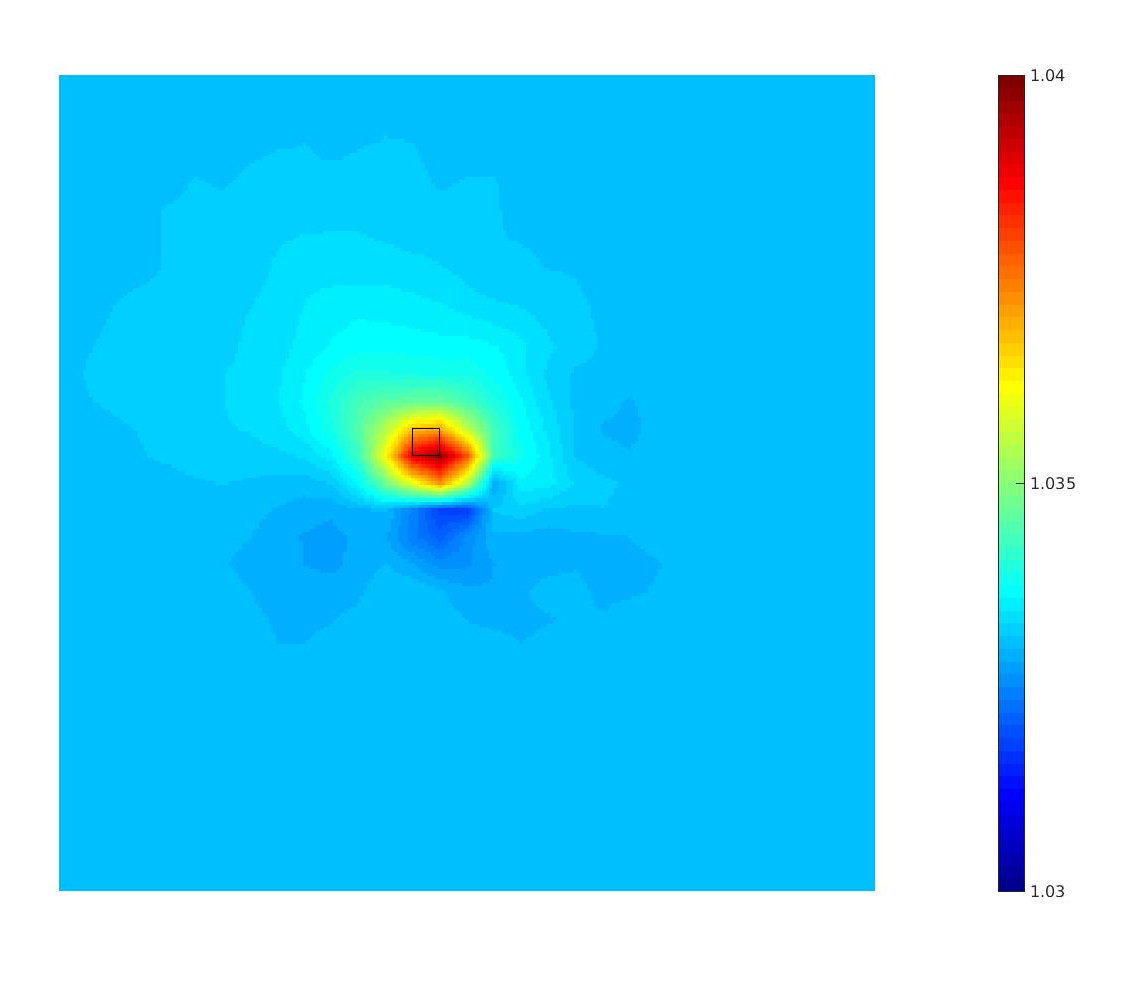}
\end{minipage}
\hspace{0.8cm}
\begin{minipage}[t]{.4\linewidth}
    \centering
    \includegraphics[width=1.0\linewidth]{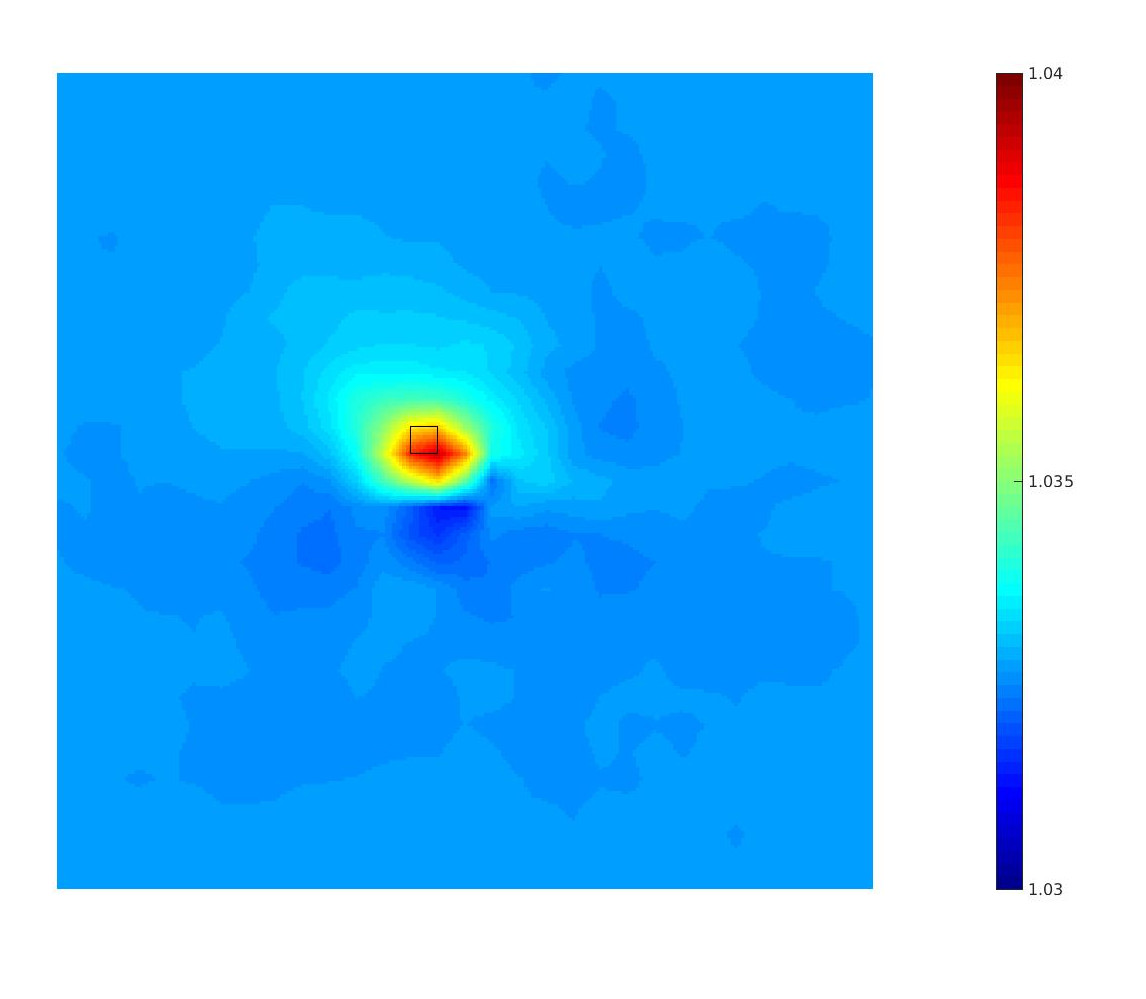}
\end{minipage}
\caption[The results of experiment 2]{The results of experiment 2. In every picture we considered damage scenario A. From top row left to bottom row right we have noise levels of $\delta=0$, $0.2$, $0.5$ and $1$.}
\label{fig:exp2}
\end{figure}
In a third experiment we consider in contrast to the first two ones incomplete data. Therefore we use as input data the displacement field $\tilde{y}=\mathcal{Q}\tilde{u}$ with the observation operator $\mathcal{Q}$. In the experiment we computed the solution with two observation operators where the sensors are attached in parallel to the four edges of the plate on both sides of the plate. For the observation operator R57d there are $57$ sensors at every edge and so all in all $448$ at the whole plate and for the other one called R8d we have $8$ sensors at 
every edge and so $56$ sensors at th whole plate. The advantage of the usage of two different observation operators, where the sensors are attached in the same way, is that we can see, how the number of sensors influences the result of the experiment. In figure \ref{fig:exp3} we can see the result after $25$ iterations. It is obviously that indeed the damage is localized in both cases, but a higher number of sensors yields a better result. Using the observation operator R8d we have more artifacts and a smaller domain such that the localized damage is less visible compared to R57d.   
\begin{figure}[ht]
\centering
\begin{minipage}[t]{.4\linewidth}
    \centering
    \includegraphics[width=1.0\linewidth]{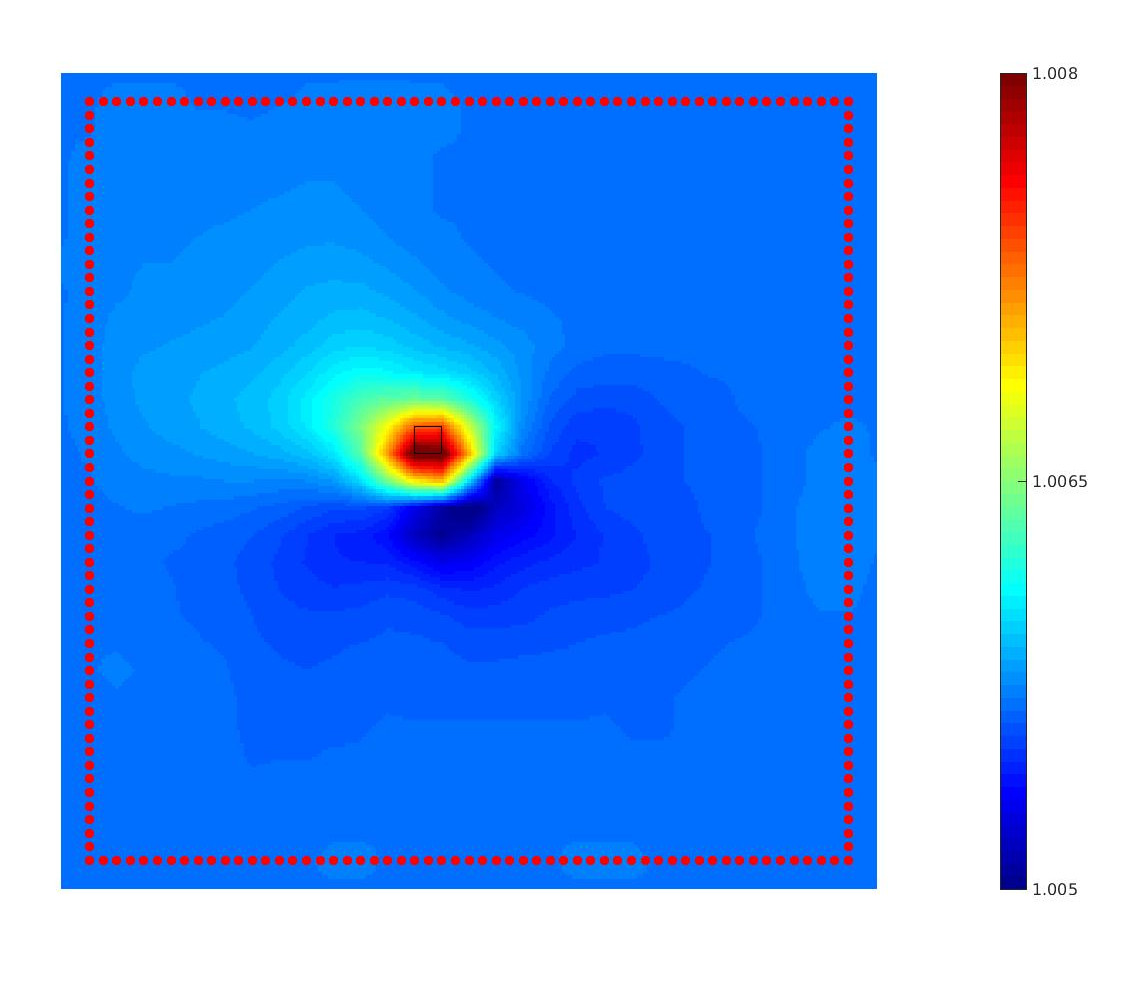}
\end{minipage}
\hspace{0.8cm}
\begin{minipage}[t]{.4\linewidth}
    \centering
    \includegraphics[width=1.0\linewidth]{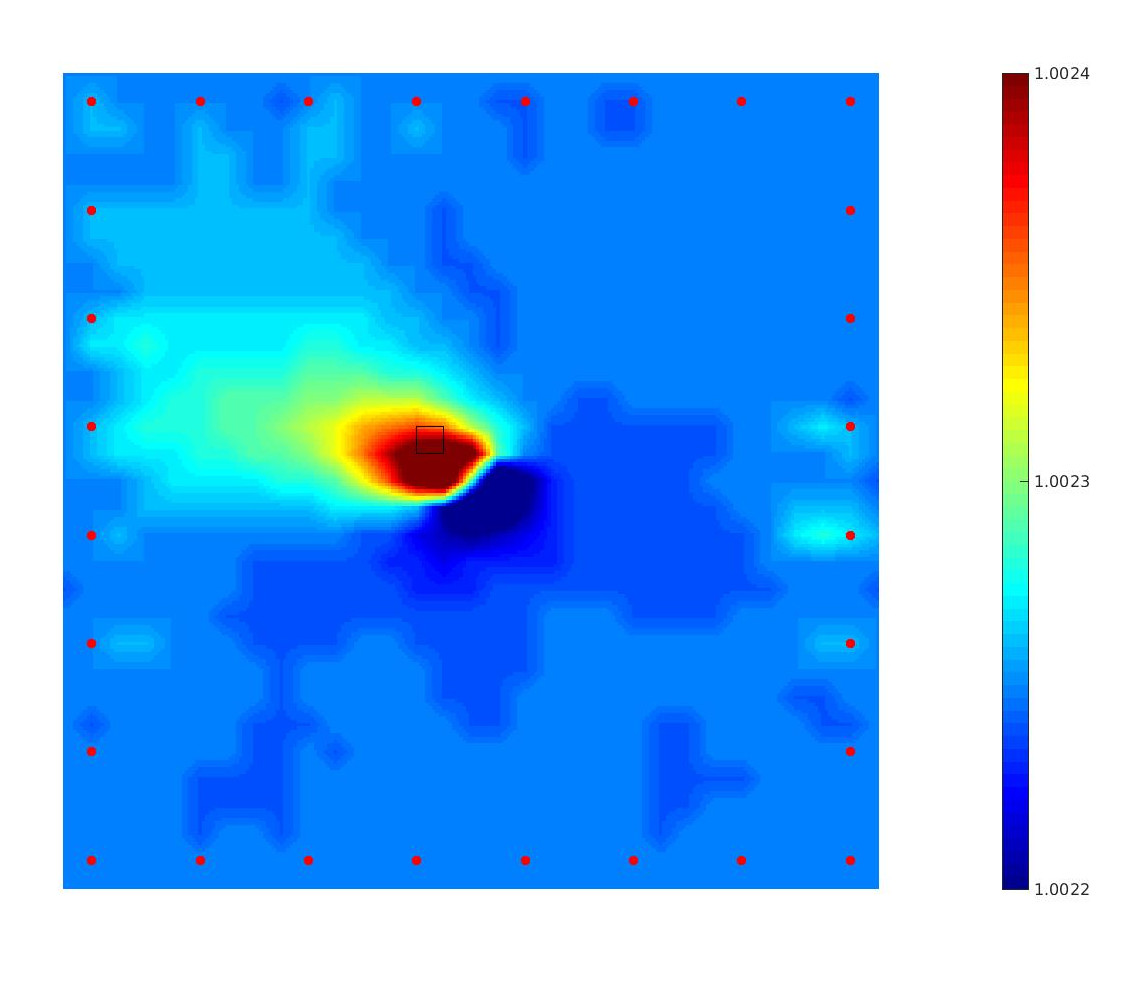}
\end{minipage}
\caption[The results of experiment 3.]{The results of experiment 3. In both pictures we considered damage scenario A with observation operator R57d (left side) and R8d (right side). The sensors are marked by red dots.}
\label{fig:exp3}
\end{figure}
\section{Conclusion}
In this article we proposed a method to detect defects in hyperelastic materials from full knowledge of the displacement field as well as sensor measurements acquired at the surface of the structure by solving a nonlinear inverse problem. The key idea 
is that we can represent the stored energy function of the hyperelastic material as a conic combination of suitable functions. Identifying these coefficients leads to the localization of defects in the structure. 
The arising inverse problem is highly nonlinear and ill-posed. In this article we solve the problem using the attenuated Landweber method. Therefore each iteration involves the solution of the nonlinear direct problem and the linear adjoint problem. The developed solver for the direct problem is also used to produce synthetic data.  
Numerical experiments showed a good performance in both cases of input data. We demonstrated that the method is also stable using noisy data. In addition it was proven that the considered identification problem fulfills the local tangential cone condition and hence that the attenuated Landweber method converges in the case of full knowledge of the displacement field to a solution of the inverse problem.

The given results show that the developed method present great space for future research. A next step in view of developing an autonomous, sensor based structural health monitoring system could be the verification of the method with real measurements at piezo sensors. Further research has to be done with respect to improve the efficiency of the method, since the Landweber method converges slowly. At the same time a finer discretization in time and space would be desirable but is out of reach because of the tremendous computing time. In that sense model reduction methods or sequential subspace optimization techniques (cf. \cite{Schopfer2009, WALD;SCHUSTER:16}) could be suitable remedies in near future.
Furthermore it might be interesting to extend the numerical considerations to curved structures such as shells as they are applied in aircraft construction. 

\bibliographystyle{siam}
\bibliography{forces-composites-bib,references-hyperelastic,references,polyconvex-references,numeriknl}
\end{document}